\newtheorem{lemma}{Lemma}[section]
\theoremstyle{remark}
\newtheorem{remark}{Remark}[section]
\theoremstyle{definition}
\newtheorem{definition}{Definition}[section]
\DeclareMathOperator{\Span}{span}
\newcommand{\Prob}{\mathsf{P}}
\newcommand{\Expect}{\mathsf{E}}
\begin{document}

\begin{center}
\LARGE{Branching Random Walks with Two Types of Particles on Multidimensional Lattices}
\end{center}
\begin{center}
\textit{Iu.\,Makarova$^1$, D.\,Balashova$^1$, S.\,Molchanov$^{2}$, E.\,Yarovaya$^1$}
\end{center}

\begin{center}
$^1$ Department of Probability Theory, Lomonosov Moscow State University,\\
% Moscow, Russia\\
%\textit{ykmakarova@gmail.com,\\dashabalashova@gmail.com, yarovaya@mech.math.msu.su}\\
%$^2$ University of North Carolina at Charlotte,\\
%Charlotte, NC 28223, USA\\
$^2$ National Research University Higher School of Economics,\\
%Myasnitskaya str., 20,
Moscow, Russia\\
%\textit{smolchan@uncc.edu}
\end{center}
\medskip

\begin{abstract}
We consider a continuous-time branching random walk on a multidimensional lattice with two types of particles and an infinite number of initial particles. The main results are devoted to the study of the generating function and the limiting behavior of the moments of subpopulations generated by a single particle of each type. We assume that particle types differ from each other not only by the laws of branching, as in multi-type branching processes, but also by the laws of walking. For a critical branching process at each lattice point and recurrent random walk of particles, the effect of limit spatial clustering of particles over the lattice is studied. A model illustrating epidemic propagation is also considered. In this model, we consider two types of particles: infected and immunity generated. Initially, there is an infected particle that can infect others. Here, for the local number of particles of each type at a lattice point, we study the moments and their limiting behavior. Additionally, the effect of intermittency of the infected particles is studied for a supercritical branching process at each lattice point. Simulations are presented to demonstrate the effect of limit clustering for the epidemiological model.

\medskip
\textbf{Keywords:} branching random walks; two-type branching processes; multidimensional lattices; homogeneous environments; clustering; intermittency
\medskip

\textbf{2020 Mathematics subject classification:} 60J27, 60J80, 05C81, 60J85
\end{abstract}

%%%%%%%%%%%%%%%%%%%%%%%%%%%%%%%%%%%%%%%%%%
\section{Introduction} \label{introduction}
The branching random walk (BRW) is one of the widely used tools for describing the processes associated with the birth, death, migration, and immigration of particles~\mbox{\cite{YarBRW:e,evlbul,BLP15,BN18}}. BRWs occur in population dynamics~\cite{MHMY} and have numerous applications, e.g., in genetics~\cite{MKY:21} and demography~\cite{MW2017}.

The continuous-time BRWs presented in this paper are the two-type branching processes with walking of particles which takes place on the multidimensional lattice $\mathbb Z^d$, $d\in \mathbb N$. We will mainly study the distribution of subpopulations generated by a single particle of each type. We assume that each particle can produce not only particles of the same type, but also particles of a different type. We also assume that particles cannot change their type. However, later in Section~\ref{important_example}, we lift this condition for a particular case.

The problem of multi-type processes is one of the most interesting and really complicated problems in the theory of random processes. Historically, these processes were apparently first considered by Sevastyanov in~\cite{Sevast}. He considered both discrete and continuous-time branching processes with a finite number of types and studied the limit distribution of particles under different conditions. Nowadays this problem is also studied in detail by various research groups. In~\cite{BH1,BH2}, for example, the authors consider a more complicated problem in which the number of types is not finite but countable. They consider the subclass of Galton--Watson processes called lower Hessenberg branching processes. In contrast to our studies, they investigated processes with discrete time, and the random walk was considered in a strip. Some works of Vatutin with co-authors~\cite{VW,VD} are devoted to the problem of multi-type branching processes with discrete time and finite number of types in the random environment, but without walking of particles.

The structure of the paper is as follows. In Section~\ref{description_of_the_model}, we describe a two-type BRW on $\mathbb Z^{d}$ with infinitely many initial particles of both types. Here, we also define the main objects of the study. In Section~\ref{the_first_moments}, we study the first moments for subpopulations generated by a single particle of each type, and find their asymptotic behavior at the sites of $\mathbb Z^{d}$. To this end, we first obtain differential equations for the generating functions of subpopulations generated by a single particle of each type in Lemma~\ref{lemma_first_moments} from Section~\ref{E:DUM1}. In order to find solutions for the corresponding equations, we turn to the equations for the Fourier transforms of the corresponding moments in Section~\ref{solution_first_moments} and show that the corresponding equations can be solved explicitly. This allows us to obtain explicit solutions~\eqref{m_ij_b=0_c>0} and \eqref{m_ij_b>0_c>0} for the Fourier transforms of the corresponding moments. The results are then applied in Section~\ref{E:AsympFVJ} to find the asymptotics of the solutions for the Fourier transforms of the first moments of the subpopulations in the case of finite variance of the jumps. In Section~\ref{the_second_moments}, we study the second moments for the subpopulations. In Section~\ref{Cluster}, we study the particle clustering effect for BRWs under additional assumptions. Here, we assume that a two-type branching process is critical at every point on $\mathbb Z^{d}$ and the distribution of an underlying random walk jumps has light tails. In Section~\ref{important_example}, we discard one of the assumptions we imposed on our model in Sections~\ref{description_of_the_model}--\ref{the_second_moments} and instead assume that the particles of the first type can occasionally change their type to the second. This model can illustrate the situation related to epidemic spread, especially the spread of COVID-19 around the world. We refer to the first type of particles as infected and the second type of particles as immunity generated against COVID-19. Here, we assume that infected particles change their type after a short period of time so that they build up immunity. However, we assume that only one particle can do this after a short period of time. In Section~\ref{simulation_of_brw}, the algorithm for modeling the processes studied in~Sections~\ref{Cluster} and~\ref{important_example} is presented and examined using the Python programming language.

\section{Description of the Model}
\label{description_of_the_model}

Here, we consider a population model with two types of particles. Let $N_i(t,y)$, with $i=1,2$, be the number of particles of type $i$ at time $t > 0$ at the site $y\in\mathbb Z^d$, $d\geqslant1$. Then, the total population at the point $y\in\mathbb Z^d$ at time $t > 0$ can be represented as the following column-vector whose components are non-negative integers:
\begin{equation}\label{total_population}
N(t,y) = [N_1(t,y),N_2(t,y)]^{T}.
\end{equation}

We assume that $N_i(0,x) = l_i$ for $i=1,2$ and all $x\in\mathbb Z^d$.

We assume that the evolution of particles of each type consists of several possibilities. First, a particle of type $i$, $i=1,2$, can die with mortality rate $\mu_i \geqslant 0$. Second, each particle of type $i$ can produce new particles of either type. We denote by $\beta_i (k,l) \geqslant 0$, $k+l\geqslant2$, the rate at which a particle of type $i$ produces $k$ particles of type $i=1$ and $l$ particles of type $i=2$. Then, we define the corresponding branching generation function (without particle death) for $i=1,2$, see, e.g.,~\cite{Sevast}:
\begin{equation}\label{gen_function}
F_i(z_1,z_2) = \sum_{k+l\geqslant2} z_1^k z_2^l \beta_i (k,l).
\end{equation}

\begin{remark}\label{remark_mu_beta}
In our notation, $\mu_i = \beta_i (0,0)$ ($i=1,2$), and we do not consider the case when a particle of the type $i=1,2$ can transform to a particle of type $j=1,2$, $j\ne i$, and hence $\beta_1 (0,1) = \beta_2 (1,0) = 0$. Additionally, we assume that
\begin{align*}
\mu_1 + \sum_{k+l\geqslant2} \beta_1 (k,l) &= -\beta_1 (1,0) > 0,\\
\mu_2 + \sum_{k+l\geqslant2} \beta_2 (k,l) &= -\beta_2 (0,1) > 0,
\end{align*}
where $\beta_1 (1,0)$ and $\beta_2 (0,1)$ denote the cases when nothing happens with the particles.
\end{remark}

Remember that particles can jump between points on the lattice. We assume that the probability of a jump from a point $x$ to a point $x+v$ during the small period $dt$ is equal to $\varkappa_i a_i(x,x+v)\,dt + o(dt)$, $i=1,2$. Here, $\varkappa_i > 0$ is the diffusion coefficient. In what follows, we consider a symmetric random walk, i.e., the case where $a_i (x,y) = a_i (y,x)$. Moreover, we assume that the random walk is homogeneous in space: $a_i (x,x+v) = a_i (v)$ and irreducible such that $\Span\{ v: a_i (v) > 0 \} = \mathbb Z^d$. Moreover, $a_i (0) = -1$, $\sum_{v} a_i (v) = 0$.

Then, the migration operator has the form
\begin{equation}
\label{generator}
(\mathcal L_i \psi)(x) := (\mathcal L_i \psi(\cdot))(x) := \varkappa_i \sum_v (\psi(x+v)-\psi(x))a_i(v).
\end{equation}

Let us introduce the subpopulations, which can be represented as the following column-vectors:
\begin{equation}\label{subpopulations}
\begin{aligned}
n_1 (t,x,y) &= [n_{11} (t,x,y),n_{12} (t,x,y)]^{T},\\
n_2 (t,x,y) &= [n_{21} (t,x,y),n_{22} (t,x,y)]^{T}.
\end{aligned}
\end{equation}

Here, $n_i (t,x,y)$ is the vector of particles at the point $y$, generated by a single particle of type $i$ which at time moment $t=0$ was at the site $x\in\mathbb Z^d$. Its components $n_{ij} (t,x,y)$ are the numbers of particles at the point $y$ of type $j$, generated by a single particle of type $i$ at $x$ at the moment $t=0$. Note that
\begin{equation}\label{initial_conditions}
n_{ij} (0,x,y) = \delta_i (j) \delta_x (y),
\end{equation}
where $\delta_u (v)$ is the Kronecker function on $\mathbb Z^d$ (or $\mathbb R$), that is if $u$, $v\in\mathbb Z^d$ (or $\mathbb R$)
\[
\delta_u (v) = \begin{cases}
1,\quad u = v;\\
0,\quad u \ne v.
\end{cases}
\]

\begin{remark}\label{R-uniform}
In the BRW under consideration we assume that both random walk and branching process are ``homogeneous''. Namely, we assume that underlying random walk for each type of particles $i=1,2$ is homogeneous in space, so that $a_i (x,y)=a_i (x-y,0) = a_i (x-y)$. At the same time branching process (which includes death and birth of particles) is also ``homogeneous'' due to the fact that all intensities $\mu_i$, $\beta_i (k,l)$, $k+l\geqslant2$, $i=1,2$ are constant and only depend on the type of particles (and independent of the lattice points).

Such a ``homogeneity'' leads to the simplifying the relations which describe the evolution of considered BRW. First of all, we conclude that for all $t\geqslant0$ the probability $\Prob\big(n_{ij} (t,x,y)=k \big)$ equals to $\Prob\big(n_{ij} (t,x-y,0)=k \big)$ for all $k\in\mathbb Z_+$, so that
\begin{equation}\label{E-nij}
\Prob\big(n_{ij} (t,x,y)=k \big)\equiv\Prob\big(n_{ij} (t,x-y,0)=k \big),\qquad t\geqslant0.
\end{equation}
To prove this equality, we consider the process $n_{ij} (t,x,y)$, which starts at some lattice point $x$, so that $n_{ij} (0,x,y)=\delta_i(j)\delta_x(y)$. Then, for each trajectory
\[
x\mapsto x_{1}\mapsto x_{2}\mapsto\cdots\mapsto x_{n-1}\mapsto y
\]
which describes the transition of a particle from point $x$ to the point $y$, there exists the ``trajectory with a shift of $y$''
\[
x-y\mapsto x_{1}-y\mapsto x_{2}-y\mapsto\cdots\mapsto x_{n-1}-y\mapsto 0
\]
which describes the transition of a particle from point $x-y$ to the point $0$. At the same time, due to the homogeneity in space of random walk, all transition intensities for both trajectories are equal ($a_i (x,y)=a_i (x-y,0)=a_i(x-y)$), and because of the ``branching homogeneity'', all branching intensities are equal at every lattice point. As $n_{ij} (0,x-y,0) = n_{ij} (0,x,y)$, then for all $t\geqslant0$ we can conclude that Equation~\eqref{E-nij} is true.

From~\eqref{E-nij}, we get that for all values which can be obtained from $n_{ij} (t,x,y)$ we have the same relations. In particular, for $\Expect n_{ij} (t,x,y)$ we get
\begin{equation}\label{E-mij}
\Expect n_{ij} (t,x,y)\equiv \Expect n_{ij} (t,x-y,0),\qquad t\geqslant0.
\end{equation}

Finally, note that the similar relation~\cite{MolchanovYarovaya2013} holds for transition probabilities $p_i(t,x,y)$, $i=1,2$ (definition will be given later)
\begin{equation}\label{E-pij}
p_{i} (t,x,y)\equiv p_{i} (t,x-y,0),\qquad t\geqslant0.
\end{equation}
The proof of the previous relation can be also obtained from the representation~\eqref{E-pi} from Section~\ref{E:AsympFVJ}.
\end{remark}

\begin{remark}\label{R-simplification}
From Equations~\eqref{E-nij} and \eqref{E-pij} obtained in Remark~\ref{R-uniform} we conclude that to investigate considered BRW, which starts at the lattice point $x$ it is sufficient to consider the case $x=0$. That can simplify the future narration.
\end{remark}

Now, using notation from Equation~\eqref{subpopulations}, we obtain the following representation of the total population specified by Equation~\eqref{total_population}:
\begin{equation}\label{N(t,y)}
 N (t,y) = \sum_{x\in\mathbb Z^d}\sum_{s\in\{ 1,\ldots,l_1 \}} n_{1,s} (t,x,y) + \sum_{x\in\mathbb Z^d} \sum_{m\in\{ 1,\ldots,l_2 \}} n_{2,m} (t,x,y)\quad,
\end{equation}
where $n_{i,l} (t,x,y)$ is the subpopulation generated by the $l$-th particle at the point $x$ at the time $t=0$. Note that both internal series in Equation~\eqref{N(t,y)} do not depend on the order of enumeration of particles.

The components of the vector $N(t,y)$ are
\begin{equation}\label{N_i(t,y)}
 N_i (t,y) = \sum_{x\in\mathbb Z^d}\sum_{s\in\{ 1,\ldots,l_1 \}} n_{1i,s} (t,x,y) + \sum_{x\in\mathbb Z^d}\sum_{m\in\{ 1,\ldots,l_2 \}} n_{2i,m} (t,x,y),
\end{equation}where $i=1,2$

Given $z = (z_1,z_2)$, let us introduce the generating function
\begin{equation}\label{subpop_gen_function}
\Phi_i (t,x,y;z) = \Expect z_1^{n_{i1} (t,x,y)} z_2^{n_{i2} (t,x,y)}.
\end{equation}

This generating function specifies the evolution of a single particle of type $i=1,2$. Let us consider what can happen to this particle (later we can use it to obtain a differential equation for the generating functions). First, the initial particle can die at a point $x$ with probability $\mu_i \,dt + o(dt)$ (then the subpopulation of this particle disappears). Second, this particle can produce $k$ particles of type $1$ and $l$ particles of type $2$ with probability $ \beta_i (k,l)\,dt + o(dt)$. Third, the particle can jump from a point $x$ to a point $x+v$ with probability $ \varkappa_i a_i (v)\,dt + o(dt)$. Finally, nothing can happen to a particle during time $dt$. From this, we get
\begin{lemma}\label{lemma_gen_function}
The generating functions $\Phi_i (t,x,y;z)$, $i=1,2 $, specified by Equation~\eqref{subpop_gen_function}, satisfy the differential equation
\begin{align}\nonumber
\frac{\partial \Phi_i (t,x,y;z)}{\partial t} &= (\mathcal L_i \Phi_i (t,\cdot,y;z))(x) + \mu_i (1-\Phi_i (t,x,y;z))\\\label{E:L21-1}
&\quad + F_i (\Phi_1 (t,x,y;z),\Phi_2 (t,x,y;z))
- \sum_{k+l\geqslant2} \beta_i (k,l) \Phi_i (t,x,y;z);\\
\label{E:L21-2}
\Phi_i (0,x,y;z) &= \begin{cases}
1, &x\ne y;\\
z_i, &x=y.
\end{cases}
\end{align}
\end{lemma}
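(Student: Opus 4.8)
The plan is to derive Equation~\eqref{E:L21-1} by the standard first-step (infinitesimal generator) analysis: condition on what happens to the single initial particle of type $i$ located at $x$ during the small time interval $[0,dt]$, use the Markov and branching properties to express $\Phi_i(t+dt,x,y;z)$ in terms of $\Phi_i(t,\cdot,y;z)$ and $\Phi_1,\Phi_2$, subtract $\Phi_i(t,x,y;z)$, divide by $dt$, and let $dt\to0$.

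First I would enumerate the mutually exclusive events occurring in $[0,dt]$ together with their probabilities, using the rates stated in the model description: the particle dies (probability $\mu_i\,dt+o(dt)$), in which case the subpopulation is empty and contributes $1$ to the generating function; the particle jumps from $x$ to $x+v$ (probability $\varkappa_i a_i(v)\,dt+o(dt)$), after which by the homogeneity/Markov property the subpopulation is distributed as one started from $x+v$, contributing $\Phi_i(t,x+v,y;z)$; the particle branches into $k$ particles of type $1$ and $l$ particles of type $2$ with $k+l\geqslant2$ (probability $\beta_i(k,l)\,dt+o(dt)$), after which the $k+l$ offspring subpopulations evolve independently, so the contribution is $\Phi_1(t,x,y;z)^k\Phi_2(t,x,y;z)^l$ by independence; and nothing happens (the remaining probability), contributing $\Phi_i(t,x,y;z)$. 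Summing $\Expect\big[z_1^{n_{i1}(t+dt,x,y)}z_2^{n_{i2}(t+dt,x,y)}\big]$ over these cases, collecting the $dt$-coefficient, and recognizing $\sum_{k+l\geqslant2}\beta_i(k,l)\Phi_1^k\Phi_2^l = F_i(\Phi_1,\Phi_2)$ from Equation~\eqref{gen_function} and $\varkappa_i\sum_v(\Phi_i(t,x+v,y;z)-\Phi_i(t,x,y;z))a_i(v) = (\mathcal L_i\Phi_i(t,\cdot,y;z))(x)$ from Equation~\eqref{generator} (here using $a_i(0)=-1$, $\sum_v a_i(v)=0$ to absorb the "nothing happens" term correctly), yields exactly~\eqref{E:L21-1} after noting that the total rate at which something changes the "do-nothing" bookkeeping is $\mu_i+\sum_{k+l\geqslant2}\beta_i(k,l)=-\beta_i(1,0)$ as in Remark~\ref{remark_mu_beta}. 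The initial condition~\eqref{E:L21-2} is immediate from~\eqref{initial_conditions}: at $t=0$ the subpopulation is the single starting particle, so $\Phi_i(0,x,y;z)=z_i$ if $x=y$ and $=1$ if $x\ne y$.

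The one point requiring a little care — and the main obstacle — is the rigorous justification that the heuristic $o(dt)$ terms are genuinely negligible, i.e. that $\Phi_i$ is differentiable in $t$ and that the backward Kolmogorov equation holds rather than merely an approximate identity; this is handled in the usual way by first writing the integral (mild) form of the equation using the semigroup generated by $\mathcal L_i$ together with the killing/branching rate, observing that the infinite-dimensional system for $(\Phi_1,\Phi_2)$ has locally Lipschitz nonlinearity on the bounded set $\{|\Phi_j|\leqslant1\}$, and invoking standard existence–uniqueness and regularity for such systems; alternatively one can appeal directly to the general theory of continuous-time branching Markov processes (e.g. as in~\cite{Sevast,YarBRW:e}). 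I would also remark that the interchange of expectation with the (a.s. finite, since branching is a pure jump process with summable rates over finite horizons) sum over offspring configurations is licit because all terms are bounded by $1$ in modulus for $z\in[0,1]^2$ and the series $\sum_{k+l\geqslant2}\beta_i(k,l)<\infty$ by Remark~\ref{remark_mu_beta}, so dominated convergence applies. With these points noted, the computation above is complete.
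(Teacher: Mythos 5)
Your proposal is correct and follows essentially the same route as the paper: a first-step (backward) decomposition over the events death, jump, branching, and no change in $[0,dt]$, collection of the $dt$-coefficients using the identifications of $F_i$ and $\mathcal L_i$, and the initial condition read off from~\eqref{initial_conditions}. The additional remarks on justifying the $o(dt)$ heuristics and the interchange of expectation and summation go beyond what the paper records but are consistent with it.
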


\begin{proof}
Given an $i=1,2$, consider the generating function $\Phi_i (t,x,y;z)$ at the time moment $t+dt$:
\begin{align*}
\Phi_i (t+dt,x,y;z) &= \Bigl(1-\varkappa_i \,dt -\mu_i \,dt - \sum_{k+l\geqslant2} \beta_i (k,l)\,dt\Bigr)\Phi_i (t,x,y;z) \\&\quad+ \varkappa_i \sum_v \Phi_i (t,x+v,y;z)a_i(v)\,dt + \mu_i \,dt\\&\quad+ \sum_{k+l\geqslant2} \beta_i (k,l) \Phi_1^k (t,x,y;z) \Phi_2^l (t,x,y;z)\,dt + o(dt).
\end{align*}

Then,
\begin{align*}
\Phi_i (t+dt,x,y;z)&-\Phi_i (t,x,y;z) = -\Bigl(\varkappa_i +\mu_i  + \sum_{k+l\geqslant2} \beta_i (k,l)\Bigr)\Phi_i (t,x,y;z)\,dt \\&\quad+ \varkappa_i  \sum_v \Phi_i (t,x+v,y;z)a_i(v)\,dt + \mu_i \,dt\\&\quad+ \sum_{k+l\geqslant2} \beta_i (k,l) \Phi_1^k (t,x,y;z) \Phi_2^l (t,x,y;z)\,dt + o(dt).
\end{align*}

Therefore,
\begin{align*}
\frac{\partial \Phi_i (t,x,y;z)}{\partial t} &= (\mathcal L_i \Phi_i (t,\cdot,y;z))(x) + \mu_i (1-\Phi_i (t,x,y;z)) \\&\quad+ \sum_{k+l\geqslant2} \beta_i (k,l) (\Phi_1^k (t,x,y;z)\Phi_2^l (t,x,y;z)-\Phi_i (t,x,y;z)).
\end{align*}

Here, according to Equation~\eqref{gen_function}, we have
\[
\sum_{k+l\geqslant2} \beta_i (k,l) \Phi_1^k (t,x,y;z)\Phi_2^l (t,x,y;z) = F_i (\Phi_1 (t,x,y;z),\Phi_2 (t,x,y;z)),
\]
and hence
\begin{align*}
\frac{\partial \Phi_i (t,x,y;z)}{\partial t} &= (\mathcal L_i \Phi_i (t,\cdot,y;z))(x) + \mu_i (1-\Phi_i (t,x,y;z)) \\&\quad+ F_i(\Phi_1 (t,x,y;z), \Phi_2 (t,x,y;z)) - \sum_{k+l\geqslant2} \beta_i (k,l)\Phi_i (t,x,y;z)).
\end{align*}

The initial condition for the latter equation follows from Equation~\eqref{initial_conditions}:
\begin{align*}
\Phi_i (0,x,y;z) &= \Expect z_1^{n_{i1} (0,x,y)} z_2^{n_{i2} (0,x,y)} = \Expect z_1^{\delta_i (1) \delta_x (y)} z_2^{\delta_i (2) \delta_x (y)} \\&= z_1^{\delta_i (1) \delta_x (y)} z_2^{\delta_i (2) \delta_x (y)} = z_i^{\delta_x (y)}.
\end{align*}

So, we obtain the desired Equations~\eqref{E:L21-1} and~\eqref{E:L21-2}, which completes the proof of Lemma~\ref{lemma_gen_function}.
\end{proof}

\begin{remark}\label{rem_carleman}
If we assume
\begin{equation}\label{int_ineq}
\beta_i (k,l) \leqslant \frac{c_0^{k+l}}{k!l!},\quad k+l\geqslant2,
\end{equation}
for some $c_0 > 0$, then the Carleman condition is hold~\cite{YSK:2020}, which guarantees that for each $i=1,2$ the function $F_i (z_1,z_2)$ from Equation~\eqref{gen_function} is an analytic function in the strip $|z_i - 1| < \delta_0$~for some $\delta_0 >0$~\cite{Stoy:e}.
\end{remark}

\section{The First Moments}
\label{the_first_moments}
Recall that the goal of our article is to study the moments of the random variables $n_{ij} (t,x,y)$, $i,j=1,2$. In this section, we will consider the first moments. For this purpose, in Section~\ref{E:DUM1}, in Lemma~\ref{lemma_first_moments}, we obtain differential equations for the generating functions of subpopulations generated by a single particle of each type. In order to find solutions to the corresponding equations, we turn to the equations for the Fourier transforms of the corresponding moments in Section~\ref{solution_first_moments} and show that the corresponding equations can be solved explicitly. This allows us to obtain explicit solutions~\eqref{m_ij_b=0_c>0} and \eqref{m_ij_b>0_c>0} for the Fourier transforms of the corresponding moments. The results are then applied in Section~\ref{E:AsympFVJ} to find the asymptotics of the solutions for the Fourier transforms of the first moments of subpopulations in the case of finite variance of the jumps.

\subsection{Differential Equations for Moments}\label{E:DUM1}
Define $m_{ij}^{(1)} (t,x,y) = \Expect n_{ij} (t,x,y)$ and prove the following lemma playing important role in what follows.

\begin{lemma}\label{lemma_first_moments}
Let Equation~\eqref{int_ineq} be true. Then, for each $i,j=1,2$, the functions $m_{ij}^{(1)} (t,x,y)$ satisfy the differential equation
\begin{align}\notag
\frac{\partial m_{ij}^{(1)} (t,x,y)}{\partial t} &= (\mathcal L_i m_{ij}^{(1)} (t,\cdot,y))(x) - \mu_i m_{ij}^{(1)} (t,x,y) - \sum_{k+l\geqslant2} \beta_i (k,l) m_{ij}^{(1)} (t,x,y) \\
\label{El21-1}&\quad+ \sum_{k+l\geqslant2} \beta_i (k,l) (km_{1j}^{(1)} (t,x,y) + lm_{2j}^{(1)} (t,x,y));\\
\label{El21-initcind}m_{ij}^{(1)} (0,x,y) &= \delta_i (j) \delta_x (y).
\end{align}
\end{lemma}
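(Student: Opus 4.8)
The plan is to differentiate the generating-function equation~\eqref{E:L21-1} from Lemma~\ref{lemma_gen_function} with respect to the variable $z_j$ and then evaluate at $z_1 = z_2 = 1$, using the fact that $\partial_{z_j}\Phi_i(t,x,y;z)\big|_{z=(1,1)} = \Expect n_{ij}(t,x,y) = m_{ij}^{(1)}(t,x,y)$, which follows from the definition~\eqref{subpop_gen_function}. The hypothesis~\eqref{int_ineq} and Remark~\ref{rem_carleman} guarantee that each $F_i$ is analytic in a neighbourhood of $z=(1,1)$, so that differentiation under the expectation and term-by-term differentiation of the series defining $F_i$ are both justified; this analyticity is precisely what makes the formal computation legitimate, and I would invoke it explicitly at the start.

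First I would record the elementary identities obtained by differentiating the constituent terms of~\eqref{E:L21-1}: the migration operator $\mathcal L_i$ is linear and acts only on the spatial variable, so $\partial_{z_j}(\mathcal L_i \Phi_i)(x) = (\mathcal L_i\, \partial_{z_j}\Phi_i(t,\cdot,y;z))(x)$, which at $z=(1,1)$ becomes $(\mathcal L_i m_{ij}^{(1)}(t,\cdot,y))(x)$. The term $\mu_i(1-\Phi_i)$ differentiates to $-\mu_i\,\partial_{z_j}\Phi_i$, giving $-\mu_i m_{ij}^{(1)}$ at $z=(1,1)$. The linear-in-$\Phi_i$ term $-\sum_{k+l\geqslant2}\beta_i(k,l)\Phi_i$ differentiates to $-\sum_{k+l\geqslant2}\beta_i(k,l)\,\partial_{z_j}\Phi_i$, yielding $-\sum_{k+l\geqslant2}\beta_i(k,l)\,m_{ij}^{(1)}$ at $z=(1,1)$.

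The one step requiring care is the composite term $F_i(\Phi_1(t,x,y;z),\Phi_2(t,x,y;z))$. By the chain rule,
\[
\frac{\partial}{\partial z_j}F_i(\Phi_1,\Phi_2) = \frac{\partial F_i}{\partial u_1}(\Phi_1,\Phi_2)\,\frac{\partial \Phi_1}{\partial z_j} + \frac{\partial F_i}{\partial u_2}(\Phi_1,\Phi_2)\,\frac{\partial \Phi_2}{\partial z_j},
\]
and since $\Phi_i(t,x,y;(1,1)) = 1$ identically (the total mass of the distribution of $n_i$), evaluating at $z=(1,1)$ gives $\partial_{u_1}F_i(1,1)\,m_{1j}^{(1)}(t,x,y) + \partial_{u_2}F_i(1,1)\,m_{2j}^{(1)}(t,x,y)$. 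From~\eqref{gen_function}, $\partial_{u_1}F_i(1,1) = \sum_{k+l\geqslant2} k\,\beta_i(k,l)$ and $\partial_{u_2}F_i(1,1) = \sum_{k+l\geqslant2} l\,\beta_i(k,l)$, so this contribution is exactly $\sum_{k+l\geqslant2}\beta_i(k,l)\bigl(k\,m_{1j}^{(1)}(t,x,y) + l\,m_{2j}^{(1)}(t,x,y)\bigr)$, matching the last line of~\eqref{El21-1}. Collecting the four pieces gives~\eqref{El21-1}. For the initial condition, I would differentiate~\eqref{E:L21-2}: at $x\ne y$ the function is the constant $1$, whose $z_j$-derivative is $0$, and at $x=y$ it equals $z_i$, whose $z_j$-derivative is $\delta_i(j)$; together these give $m_{ij}^{(1)}(0,x,y) = \delta_i(j)\delta_x(y)$, which is~\eqref{El21-initcind} (consistent also with~\eqref{initial_conditions} and the definition of $m_{ij}^{(1)}$).

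The main obstacle, such as it is, is purely one of rigour rather than computation: one must justify interchanging $\partial_{z_j}$ with both the infinite sum in $F_i$ and the expectation defining $\Phi_i$, and must confirm that the moments $m_{ij}^{(1)}(t,x,y)$ are finite so that the equation is not vacuous. Both are handled by the Carleman-type bound~\eqref{int_ineq}: it yields local analyticity of $F_i$ near $(1,1)$ (Remark~\ref{rem_carleman}), and a standard Gronwall argument applied to the resulting linear system—or a direct domination argument on the branching mechanism—shows the first moments stay finite for all $t\geqslant0$, legitimising the differentiation under the expectation sign. I would state these justifications briefly and then present the chain-rule computation above as the body of the proof.
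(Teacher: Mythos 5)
Your proposal is correct and follows essentially the same route as the paper's proof: differentiate the generating-function equation of Lemma~\ref{lemma_gen_function} in $z_j$, evaluate at $z=(1,1)$ using $\Phi_i(t,x,y;(1,1))=1$ and $\partial_{z_j}\Phi_i|_{z=(1,1)}=m_{ij}^{(1)}$, and read off the initial condition from~\eqref{E:L21-2}. The only cosmetic difference is that you apply the chain rule to $F_i$ directly, whereas the paper differentiates the series $\sum_{k+l\geqslant2}\beta_i(k,l)\Phi_1^k\Phi_2^l$ term by term; the two computations are identical.
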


\begin{proof}
Differentiating  Equation~\eqref{subpop_gen_function} with respect to $z_{j}$, $j=1,2$, we get
\[
\frac{\partial \Phi_i (t,x,y; z)}{\partial z_j} = \frac{\partial \Expect z_1^{n_{i1} (t,x,y)} z_2^{n_{i2} (t,x,y)}}{\partial z_j} = \Expect n_{ij} (t,x,y) z_1^{n_{i1} (t,x,y) - \delta_j (1)} z_2^{n_{i2} (t,x,y) - \delta_j (2)},
\]
from which, by taking $z=(z_{1},z_{2})=(1,1)$, we obtain
\begin{equation}\label{gen_function_first_moment}
\frac{\partial \Phi_i (t,x,y; z)}{\partial z_j}\Bigr|_{z=(1,1)} = \Expect n_{ij} (t,x,y) = m_{ij}^{(1)} (t,x,y).
\end{equation}

Now, differentiating Equation~\eqref{E:L21-1} over $z_j$ we can write
\begin{align*}
\frac{\partial^2 \Phi_i (t,x,y; z)}{\partial t \partial z_j} &= \partial_{z_j} \Bigl( (\mathcal L_i \Phi_i (t,\cdot,y;z))(x) + \mu_i (1-\Phi_i (t,x,y;z)) \\&\quad+ \sum_{k+l\geqslant2} \beta_i (k,l) (\Phi_1^k (t,x,y;z)\Phi_2^l (t,x,y;z)-\Phi_i (t,x,y;z)) \Bigr) \\&= \bigl(\mathcal L_i (\partial_{z_j} \Phi_i (t,\cdot,y;z))\bigr)(x) - \mu_i \bigl(\partial_{z_j} \Phi_i (t,x,y;z)\bigr) \\&\quad+ \sum_{k+l\geqslant2} \beta_i (k,l) \Bigl( k\bigl(\partial_{z_j} \Phi_1 (t,x,y;z)\bigr)\Phi_1^{k-1} (t,x,y;z)\Phi_2^l (t,x,y;z) \\&\quad+ l\Phi_1^k (t,x,y;z)\bigl(\partial_{z_j} \Phi_2 (t,x,y;z)\bigr)\Phi_2^{l-1} (t,x,y;z) - \bigl(\partial_{z_j} \Phi_i (t,x,y;z)\bigr) \Bigr).
\end{align*}

Again, by taking $z=(z_{1},z_{2})=(1,1)$ in the above formula and applying Equation~\eqref{gen_function_first_moment}, we find that the left-hand part of the last equation takes the form:
\begin{align}\label{lemma_left_part_equation}
\frac{\partial^2 \Phi_i (t,x,y; z)}{\partial t \partial z_j}\Bigr|_{z = (1,1)} = \frac{\partial m_{ij}^{(1)} (t,x,y)}{\partial t};
\end{align}
while the right-hand part of the same equation equals
\begin{align}\label{lemma_right_part_equation}
&\biggl(\bigl(\mathcal L_i (\partial_{z_j} \Phi_i (t,\cdot,y;z))\bigr)(x) - \mu_i \bigl(\partial_{z_j} \Phi_i (t,x,y;z)\bigr) + \sum_{k+l\geqslant2} \beta_i (k,l) \Bigl( k\bigl(\partial_{z_j} \Phi_1 (t,x,y;z)\bigr)\notag\\&\quad\times\Phi_1^{k-1} (t,x,y;z)\Phi_2^l (t,x,y;z) + l\Phi_1^k (t,x,y;z)\bigl(\partial_{z_j} \Phi_2 (t,x,y;z)\bigr)\notag\\&\quad\times\Phi_2^{l-1} (t,x,y;z) - \bigl(\partial_{z_j} \Phi_i (t,x,y;z)\bigr) \Bigr)\biggr) \Bigr|_{z=(1,1)} = (\mathcal L_i m_{ij}^{(1)} (t,\cdot,y))(x) \notag\\&\quad- \mu_i m_{ij}^{(1)} (t,x,y) + \sum_{k+l\geqslant2} \beta_i (k,l)\bigl(k-1) m_{1j}^{(1)} (t,x,y) + lm_{2j}^{(1)} (t,x,y).
\end{align}

By combining Equation~\eqref{lemma_left_part_equation} and \eqref{lemma_right_part_equation}, we obtain
\begin{align*}
\frac{\partial m_{ij}^{(1)} (t,x,y)}{\partial t} &= (\mathcal L_i m_{ij}^{(1)} (t,\cdot,y))(x)- \mu_i m_{ij}^{(1)} (t,x,y) - \sum_{k+l\geqslant2} \beta_i (k,l) m_{ij}^{(1)} (t,x,y) \\&\quad+ \sum_{k+l\geqslant2} \beta_i (k,l) (km_{1j}^{(1)} (t,x,y) + lm_{2j}^{(1)} (t,x,y)).
\end{align*}

The initial condition for the latter equation can be found from Equation~\eqref{initial_conditions}:
\[
m_{ij}^{(1)} (0,x,y) = \Expect n_{ij} (0,x,y) = \Expect \delta_i (j)\delta_x (y) = \delta_i (j)\delta_x (y).
\]

Lemma~\ref{lemma_first_moments} is proved.
\end{proof}

\begin{remark}\label{R:moments1}
From Lemma~\ref{lemma_first_moments} and the general theory of differential equations (in Banach spaces), for any $i,j=1,2$ one can easily obtain the inequality
\[
|m_{ij}^{(1)}(t, x, y)| < \infty\quad\text{for~all}~ t \ge 0,
\]
(see, e.g., the proof of similar facts in \cite{YarBRW:e,MN:12:e}), but in order not to overload the exposition, its elementary proof is given in Section~\ref{solution_first_moments}.

Nevertheless, let us explain the main ideas of the corresponding proof. Equation~\eqref{El21-1} with initial condition~\eqref{El21-initcind} can be treated as a linear differential equation in a Banach space whose right-hand side (for each $t$ and $y$) is a linear bounded operator acting in any of the spaces $l_{p}(\mathbb{Z}^{d})$, $p\ge1$. Since in this case the initial condition $m_{ij}^{(1)} (0,x,y)$ for each $y$ as a function of the variable $x$ also belongs to each of the spaces $l_{p}(\mathbb{Z}^{d})$, $p\ge1$, then, as shown, for example, in \cite{YarBRW:e,MN:12:e}, $m_{ij}^{(1)} (t,x,y)$ (for each $t$ and $y$) as a function of the variable $x$ also belongs to each of the spaces $l_{p}(\mathbb{Z}^{d})$, $p\ge1$, and is thus bounded.
\end{remark}

In Lemma~\ref{lemma_first_moments}, we have obtained the differential equations for the subpopulations generated by a single particle of each type. Now, we want to obtain the differential equation for the full population $N(t,y)$.

Define $m_i^{(1)} (t,x,y) = \Expect n_i (t,x,y)$, $i=1,2$, and rewrite the Equations~\eqref{El21-1} from Lemma~\ref{lemma_first_moments} in the following form:
\begin{align}\notag
\frac{\partial m_1^{(1)} (t,x,y)}{\partial t} &= (\mathcal L_1 m_1^{(1)} (t,\cdot,y))(x) + \sum_{k+l\geqslant2} l \beta_1 (k,l) m_2^{(1)} (t,x,y) \notag\\&\quad+ \Bigl(\sum_{k+l\geqslant2} (k-1) \beta_1 (k,l) - \mu_1\Bigr) m_1^{(1)} (t,x,y),\label{m_1(t,x,y)}
\end{align}
\begin{align}\notag
\frac{\partial m_2^{(1)} (t,x,y)}{\partial t} &= (\mathcal L_2 m_2^{(1)} (t,\cdot,y))(x) + \sum_{k+l\geqslant2} k \beta_2 (k,l) m_1^{(1)} (t,x,y)\notag \\&\quad+ \Bigl(\sum_{k+l\geqslant2} (l-1) \beta_2 (k,l)-\mu_2\Bigr) m_2^{(1)} (t,x,y),\label{m_2(t,x,y)}
\end{align}
with the initial conditions
\[
m_1^{(1)} (0,x,y) = [\delta_x (y),0]^{T},\qquad m_2^{(1)} (0,x,y) = [0,\delta_x (y)]^{T}.
\]

Let us denote $n(t,x,y) = [n_1 (t,x,y),n_2 (t,x,y)]^T$ and $ m^{(1)} (t,x,y) = \Expect n (t,x,y)$. Then, the pair of equations,  Equations~\eqref{m_1(t,x,y)} and \eqref{m_2(t,x,y)}, can be rewritten in a more compact~form:
\[
\frac{\partial m^{(1)} (t,x,y)}{\partial t} = \begin{pmatrix}
(\mathcal L_1 m_1^{(1)} (t,\cdot,y))(x)\\
(\mathcal L_2 m_2^{(1)} (t,\cdot,y))(x)\end{pmatrix} + V\begin{pmatrix}
m_1^{(1)}\\
m_2^{(1)}\end{pmatrix},
\]
where $V$ is the matrix
\[
V = \begin{pmatrix}
-\mu_1 + \sum_{k+l\geqslant2} (k-1) \beta_1 (k,l) & \sum_{k+l\geqslant2} l \beta_1 (k,l)\\
\sum_{k+l\geqslant2} k \beta_2 (k,l) & -\mu_2 + \sum_{k+l\geqslant2} (l-1) \beta_2 (k,l)
\end{pmatrix}.
\]

The above calculus let us the opportunity to get the equation for the full population at the site $y\in\mathbb Z^d$. Using the representation of $N_i (t,y)$, $i=1,2$, in Equation~\eqref{N_i(t,y)} we obtain for $m^{(1)} (t,y) := \Expect N(t,y) = [m_1^{(1)} (t,y),m_2^{(1)} (t,y)]^T$ the following formula:
\[
m_i^{(1)} (t,y) = \Expect N_i (t,y) = \sum_{x\in\mathbb Z^d}\sum_{s\in\{ 1,\ldots,l_1 \}} m_{1i,s}^{(1)} (t,x,y) + \sum_{x\in\mathbb Z^d}\sum_{m\in\{ 1,\ldots,l_2 \}} m_{2i,m}^{(1)} (t,x,y).
\]

Taking the partial derivative over the parameter $t$ for each component of $m^{(1)} (t,y)$ we derive from the above formula the equation:
\begin{equation}\label{E:dm1ty}
\frac{\partial m^{(1)} (t,y)}{\partial t} = \sum_{x\in\mathbb Z^d}\sum_{s\in\{ 1,\ldots,l_1 \}} \frac{\partial m_{1,s}^{(1)} (t,x,y)}{\partial t} + \sum_{x\in\mathbb Z^d}\sum_{m\in\{ 1,\ldots,l_2 \}} \frac{\partial m_{2,m}^{(1)} (t,x,y)}{\partial t},
\end{equation}
where $m_{i,l}^{(1)} (t,x,y) := \Expect n_{i,l} (t,x,y)$.

Formula~\eqref{E:dm1ty} describes how the behavior of the full population depends on the behavior of each subpopulation. Later on, we will study the behavior of subpopulations in more~details.

\subsection{Solutions of Differential Equations for the First Moments}
\label{solution_first_moments}

In this section, we will find an explicit form of the solutions of the differential equations obtained in Lemma~\ref{lemma_first_moments}. To find these solutions, we will use the discrete Fourier transform. For simplicity, we recall that the Fourier transform $\widehat{f}(\theta)$ of a function $f(u)$ is defined as
\begin{equation}\label{fourier_transform}
\widehat{f}(\theta) = \sum_{u\in\mathbb Z^d} e^{i (\theta,u)} f(u),\qquad \theta\in[-\pi,\pi]^d,
\end{equation}
where $(\cdot,\cdot)$ is the dot product in $\mathbb R^d\times\mathbb R^d$, while the inverse Fourier transform is of the form
\begin{equation}\label{inv_fourier_transform}
f(u) = \left(\frac{1}{2\pi}\right)^d \int_{[-\pi,\pi]^d} \widehat{f}(\theta) e^{-i (\theta,u)}\,d\theta.
\end{equation}

By applying the Fourier transform~\eqref{fourier_transform} to Equations~\eqref{m_1(t,x,y)} and~\eqref{m_2(t,x,y)}, we obtain the~equations
\begin{align}\notag
\frac{\partial \widehat{m}_1^{(1)} (t,\theta,y)}{\partial t} &= \varkappa_1 \widehat{a}_1 (\theta) \widehat{m}_1^{(1)} (t,\theta,y)  + \sum_{k+l\geqslant2} l \beta_1 (k,l) \widehat{m}_2^{(1)} (t,\theta,y) \notag\\&\quad+ \Bigl(\sum_{k+l\geqslant2} (k-1) \beta_1 (k,l) - \mu_1\Bigr) \widehat{m}_1^{(1)} (t,\theta,y),\label{fourier_m_1(t,x,y)}
\end{align}
\begin{align}\notag
\frac{\partial \widehat{m}_2^{(1)} (t,\theta,y)}{\partial t} &= \varkappa_2 \widehat{a}_2 (\theta) \widehat{m}_2^{(1)} (t,\theta,y) + \sum_{k+l\geqslant2} k \beta_2 (k,l) \widehat{m}_1^{(1)} (t,\theta,y) \notag\\&\quad+ \Bigl(\sum_{k+l\geqslant2} (l-1) \beta_2 (k,l) - \mu_2\Bigr) \widehat{m}_2^{(1)} (t,\theta,y),\label{fourier_m_2(t,x,y)}
\end{align}
with the initial conditions
\[
\widehat{m}_1^{(1)} (0,\theta,y) = [e^{i (\theta,y)},0]^{T},\qquad \widehat{m}_2^{(1)} (0,\theta,y) = [0,e^{i (\theta,y)}]^{T}.
\]

To simplify formulas~\eqref{fourier_m_1(t,x,y)} and~\eqref{fourier_m_2(t,x,y)}, let us introduce the following notations:
\begin{align}\label{a}
a(\theta) &= \varkappa_1 \widehat{a}_1 (\theta) + \Bigl(\sum_{k+l\geqslant2} (k-1) \beta_1 (k,l) - \mu_1\Bigr);\\
\label{b}
b &= \sum_{k+l\geqslant2} l \beta_1 (k,l) \geqslant0;\\
\label{c}
c &= \sum_{k+l\geqslant2} k \beta_2 (k,l) \geqslant0;\\
\label{d}
d(\theta) &= \varkappa_2 \widehat{a}_2 (\theta) + \Bigl(\sum_{k+l\geqslant2} (l-1) \beta_2 (k,l) - \mu_2\Bigr).
\end{align}

With the usage of these notations, Equations~\eqref{fourier_m_1(t,x,y)} and~\eqref{fourier_m_2(t,x,y)} can be represented in a more compact form:
\begin{alignat}{2}\label{fourier_m_1(t,x,y)_new}
\frac{\partial \widehat{m}_1^{(1)} (t,\theta,y)}{\partial t} &= a(\theta) \widehat{m}_1^{(1)} (t,\theta,y) + b \widehat{m}_2^{(1)} (t,\theta,y),&\qquad
\widehat{m}_1^{(1)} (0,\theta,y) &= [e^{i (\theta,y)},~0]^{T}\\
\label{fourier_m_2(t,x,y)_new}
\frac{\partial \widehat{m}_2^{(1)} (t,\theta,y)}{\partial t} &= c \widehat{m}_1^{(1)} (t,\theta,y) + d(\theta) \widehat{m}_2^{(1)} (t,\theta,y),&\qquad
\widehat{m}_2^{(1)} (0,\theta,y) &= [0,e^{i (\theta,y)}]^{T}.
\end{alignat}

To get a solution for this last system of differential equations, let us recall some facts from the theory of two-dimensional linear differential equations and perform some auxiliary calculations.

\begin{remark}\label{Yrem_ODE}
Represent Equations~\eqref{fourier_m_1(t,x,y)_new} and \eqref{fourier_m_2(t,x,y)_new} arising in our treatment in a conventional form of a system of linear differential equations with two variables (see details in~\cite{Filipp}):
\begin{alignat}{2}\label{Yu(t)}
\frac{du(t)}{dt} &= au(t) + bv(t),&\qquad u(0)&=u_{0},\\
\label{Yv(t)}
\frac{dv(t)}{dt} &= cu(t) + dv(t),&\qquad v(0)&=v_{0},
\end{alignat}
assuming that $a,b,c$ and $d$ here are some numerical parameters. In order to ``keep the connection'' with Equations~\eqref{fourier_m_1(t,x,y)_new} and \eqref{fourier_m_2(t,x,y)_new} and not consider options unnecessary in the future, we will assume throughout this remark that
\[
b,c\geqslant0.
\]

As is known (see, e.g.,~\cite{Filipp} or some other handbook on the theory of differential equations), the behavior of solutions of Equations~\eqref{Yu(t)} and \eqref{Yv(t)} is completely determined, in a sense, by the roots of characteristic equation of the matrix of coefficients standing in the right-hand side of Equations~\eqref{Yu(t)} and \eqref{Yv(t)}:
\begin{equation}\label{char_eq_rem}
\lambda^2 - (a+d)\lambda + (ad-bc) = 0.
\end{equation}
These roots are as follows
\[
  \lambda_{1}=\frac{a+d+\sqrt{D}}{2},\quad\lambda_{2}=\frac{a+d-\sqrt{D}}{2}\quad\text{where}~D = (a-d)^2 + 4bc.
\]
Note that under the assumption $b,c\geqslant 0$, the discriminant $D$ is non-negative, and therefore the roots $\lambda_{1,2}$ are real.

Let $D=0$; this can be if and only if
\[
a=d\quad\text{and}\quad b=0~\text{or}~c=0.
\]
In this case, $\lambda_{1}=\lambda_{2}$ coincide with each other and moreover $\lambda_{1} = \lambda_{2}=a=d$. Then~(see, e.g.,~\cite{Filipp}), the solution $u(t)$ of Equations~\eqref{Yu(t)} and \eqref{Yv(t)} is a linear combination of the functions $e^{\lambda t}$ and $te^{\lambda t}$:
\[
u(t) = (C_1 + C_2 t)e^{\lambda t}.
\]
The solution $v(t)$ can be expressed likewise.

Let $D\ne0$; this can be if and only if
\[
a\neq d\quad\text{or}\quad b\neq0~\text{and}~c\neq0.
\]
In this case, $\lambda_{1}\neq\lambda_{2}$ and~(see, e.g.,~\cite{Filipp}) the solution $u(t)$ of Equations~\eqref{Yu(t)}  and \eqref{Yv(t)} is a linear combination of the functions $e^{\lambda_{1}t}$ and $e^{\lambda_{2}t}$:
\begin{equation}\label{Yrem_eq_roots}
u(t) = C_1 e^{\lambda_{1}t} + C_2 e^{\lambda_{2}t}.
\end{equation}
The solution $v(t)$ can be expressed likewise.

Let us write out the precise forms of the solutions $u(t)$ and $v(t)$ of Equations~\eqref{Yu(t)} and \eqref{Yv(t)}; they will be needed in the further analysis. Consider the following combinations of the parameters $b$ and $c$:
\[
b=0,c\geqslant0\quad\text{or}\quad b\geqslant0,c=0\quad\text{or}\quad b>0,c>0,
\]
which exhaust all possible combinations of these parameters under condition $b,c\geqslant0$. The fact that the first two of these conditions intersect does not interfere with further considerations. We also mention that the case $b=c=0$ is covered by both of the first two cases.

\paragraph{Case $b=0,c\geqslant0$.} Here, $u(t)$ can be found directly from Equation~\eqref{Yu(t)}:
\[
u(t)=e^{at} u_{0}.
\]
To find $v(t)$ it suffices to substitute the obtained expression for $u(t)$ into Equation~\eqref{Yv(t)} and to solve the resulted non-homogeneous linear differential equation:
\[
v(t)=e^{dt} v_{0}+\int_{0}^{t}e^{d(t-s)}ce^{as} u_{0}\,ds.
\]
The value of the integral in the right-hand side of the obtained equality is different  depending on whether the equality $a=d$ holds. Direct evaluation shows that
\[
v(t)=\begin{cases}
  \left(v_{0}-\frac{cu_{0}}{a-d}\right)e^{dt}+\frac{cu_{0}}{a-d}e^{at},&\quad a\neq d\\
  (v_{0}+cu_{0} t)e^{dt},&\quad a= d.
\end{cases}
\]

\paragraph{Case $b\geqslant0,c=0$.} This case is treated similarly to the previous one, and we get:
\begin{align*}
u(t)&=\begin{cases}
  \left(u_{0}-\frac{bv_{0}}{d-a}\right)e^{at}+\frac{bv_{0}}{d-a}e^{dt},&\quad a\neq d\\
  (u_{0}+bv_{0} t)e^{at},&\quad a= d,
\end{cases}\\
v(t)&=e^{dt} v_{0}.
\end{align*}

\paragraph{Case $b>0,c>0$.} In this case, both roots $\lambda_{1,2}$ of Equation~\eqref{char_eq_rem} are different, and moreover $\lambda_1>\lambda_2$. In order to find the solutions $u(t)$ and $v(t)$ of Equations~\eqref{Yu(t)} and \eqref{Yv(t)} let us first take $t=0$ in  Equation~\eqref{Yrem_eq_roots}. Then, we obtain the following equation for the initial condition $u(0)$:
\begin{equation}\label{YC1C2}
C_1 + C_2 = u(0)=u_{0}.
\end{equation}
Further, find $bv(t)$ from Equation~\eqref{Yu(t)}:
\[
bv(t)=u'(t)-au(t)= C_{1}(\lambda_{1}-a)e^{\lambda_{1}t}+C_{2}(\lambda_{2}-a)e^{\lambda_{2}t}.
\]
Using the obtained expression we will get the equation for $bv(0)$:
\begin{equation}\label{YCL1CL2}
C_{1}(\lambda_{1}-a)+C_{2}(\lambda_{2}-a)=bv(0)=bv_{0}.
\end{equation}
By solving the resulting system of Equations~\eqref{YC1C2} and \eqref{YCL1CL2}, we get
\[
C_1 = \frac{bv_0 + u_0 (a - \lambda_2)}{\lambda_1 - \lambda_2},\qquad
C_2 = \frac{u_0 (\lambda_1 - a) - bv_0}{\lambda_1 - \lambda_2},
\]
from which
\begin{align*}
u(t) &= \frac{bv_0 + u_0 (a - \lambda_2)}{\lambda_1 - \lambda_2} e^{\lambda_1 t} + \frac{u_0 (\lambda_1 - a) - bv_0}{\lambda_1 - \lambda_2} e^{\lambda_2 t},\\
v(t) &= \frac{(\lambda_1 - a)}{b}\frac{bv_0 + u_0 (a - \lambda_2)}{\lambda_1 - \lambda_2} e^{\lambda_1 t} + \frac{(\lambda_2 - a)}{b}\frac{u_0 (\lambda_1 - a) - bv_0}{\lambda_1 - \lambda_2} e^{\lambda_2 t}.
\end{align*}

Here, the last equation can be simplified by noting that
\[
\frac{(\lambda_{2}-a)(\lambda_{1}-a)}{b}=-c,\quad \lambda_1-\lambda_2=\sqrt{D}.
\]

As a result, we obtain:
\begin{align*}
u(t) &= \frac{1}{\sqrt{D}} \biggl(\bigl(bv_0 + u_0 (a - \lambda_2)\bigr) e^{\lambda_1 t} + \bigl(u_0 (\lambda_1 - a) - bv_0\bigr) e^{\lambda_2 t} \biggr),\\
v(t) &= \frac{1}{\sqrt{D}} \biggl(\bigl(v_0(\lambda_1 - a) + cu_0\bigr)e^{\lambda_1 t} - \bigl(cu_0 - (\lambda_2 - a)v_0 \bigr)e^{\lambda_2 t}\biggr).
\end{align*}
\qed
\end{remark}

Now, we are able to write out the solutions of Equations~\eqref{fourier_m_1(t,x,y)_new} and \eqref{fourier_m_2(t,x,y)_new}. For this, it suffices to note that although in the reasoning of Remark~\ref{Yrem_ODE}  it was implicitly assumed that the functions $u(t)$ and $v(t)$ are scalar,  but in fact this assumption was never used anywhere, and the functions $u(t)$ and $v(t)$ may be assumed vector-valued, for example, such as $\widehat{m}_i^{(1)} (t,\theta,y)$ in Equations~\eqref{fourier_m_1(t,x,y)_new} and \eqref{fourier_m_2(t,x,y)_new}.

One should also pay attention to the fact that in Equations~\eqref{fourier_m_1(t,x,y)_new} and \eqref{fourier_m_2(t,x,y)_new}, in contrast to Equations~\eqref{Yu(t)} and \eqref{Yv(t)}, the parameters $a$ and $d$ are actually functions of the variable $\theta$, that is, $a=a(\theta)$ and $d=d(\theta)$, and then the values $\lambda_{1}, \lambda_{2}$ and $D$ are also functions of the variable $\theta$:
\begin{equation}\label{lambda}
  \lambda_{1}(\theta)=\frac{a(\theta)+d(\theta)+\sqrt{D(\theta)}}{2},\quad
  \lambda_{2}(\theta)=\frac{a(\theta)+d(\theta)-\sqrt{D(\theta)}}{2},
\end{equation}
and
\begin{equation}\label{D}
D(\theta) = \left(a(\theta)-d(\theta)\right)^2 + 4bc.
\end{equation}

Considering the above, we can write out the solutions $\widehat{m}_1^{(1)}(t,\theta,y)$ and $\widehat{m}_2^{(1)}(t,\theta,y)$ of Equations~\eqref{fourier_m_1(t,x,y)_new} and \eqref{fourier_m_2(t,x,y)_new} using the appropriate initial conditions.

\paragraph{Case $b=0,c\geqslant0$.} Here,
\begin{align*}
\widehat{m}_1^{(1)}(t,\theta,y)&=e^{a(\theta)t} \widehat{m}_1^{(1)}(0,\theta,y),\\
\widehat{m}_2^{(1)} (t,\theta,y)&=\begin{cases}
  \left(\widehat{m}_2^{(1)}(0,\theta,y)-\frac{c}{a(\theta)-d(\theta)}\widehat{m}_1^{(1)}(0,\theta,y)\right)e^{d(\theta)t}+\\ \qquad+\frac{c}{a(\theta)-d(\theta)}\widehat{m}_1^{(1)}(0,\theta,y)e^{a(\theta)t},& \quad\text{if}~\theta~\text{s.t.}~ a(\theta)\neq d(\theta),\\
  (\widehat{m}_2^{(1)}(0,\theta,y)+c\widehat{m}_1^{(1)}(0,\theta,y) t)e^{dt},&\quad\text{if}~\theta~\text{s.t.}~ a(\theta)= d(\theta).
\end{cases}
\end{align*}

\paragraph{Case $b\geqslant0,c=0$.} Here,
\begin{align*}
\widehat{m}_1^{(1)}(t,\theta,y)&=\begin{cases}
  \left(\widehat{m}_1^{(1)}(0,\theta,y)-\frac{b}{d(\theta)-a(\theta)}\widehat{m}_2^{(1)}(0,\theta,y)\right)e^{a(\theta)t}+\\
  \qquad+\frac{b}{d(\theta)-a(\theta)}\widehat{m}_2^{(1)}(0,\theta,y)e^{d(\theta)t},&\quad\text{if}~\theta~\text{s.t.}~ a(\theta)\neq d(\theta),\\
  (\widehat{m}_1^{(1)}(0,\theta,y)+b\widehat{m}_2^{(1)}(0,\theta,y) t)e^{a(\theta)t},&\quad\text{if}~\theta~\text{s.t.}~ a(\theta)= d(\theta),
\end{cases}\\
\widehat{m}_2^{(1)} (t,\theta,y)&=e^{d(\theta)t} \widehat{m}_2^{(1)} (0,\theta,y).
\end{align*}

\paragraph{Case $b>0,c>0$.} Here,
\begin{align*}
\widehat{m}_1^{(1)}(t,\theta,y) &= \frac{1}{\sqrt{D(\theta)}}\left((a(\theta)-\lambda_{2}(\theta))\widehat{m}_1^{(1)}(0,\theta,y)+b\widehat{m}_2^{(1)}(0,\theta,y)\right) e^{\lambda_{1}(\theta)t} +\\
&\qquad+ \frac{1}{\sqrt{D(\theta)}}\left((\lambda_{1}(\theta)-a(\theta))\widehat{m}_1^{(1)}(0,\theta,y)-b\widehat{m}_2^{(1)}(0,\theta,y)\right) e^{\lambda_{2}(\theta)t}\\
\widehat{m}_2^{(1)}(t,\theta,y)&= \frac{1}{\sqrt{D(\theta)}}\left(c\widehat{m}_1^{(1)}(0,\theta,y)+(\lambda_{1}(\theta)-a(\theta))\widehat{m}_2^{(1)}(0,\theta,y)\right) e^{\lambda_{1}(\theta)t}+\\
&\qquad+
\frac{1}{\sqrt{D(\theta)}}\left(-c\widehat{m}_1^{(1)}(0,\theta,y)+(\lambda_{2}(\theta)-a(\theta))\widehat{m}_2^{(1)}(0,\theta,y)\right)e^{\lambda_{2}(\theta)t}.
\end{align*}

Finally, it needs to be remembered that each of the functions $\widehat{m}_1^{(1)}(t,\theta,y)$ and $\widehat{m}_2^{(1)}(t,\theta,y)$ is a two-component vector-function. Therefore, taking the components of the functions $\widehat{m}_1^{(1)}(t,\theta,y)$ and $\widehat{m}_2^{(1)}(t,\theta,y)$ for the obtained above three cases, we will obtain three cases of formulas for $\widehat{m}_{11}^{(1)}(t,\theta,y)$, $\widehat{m}_{12}^{(1)}(t,\theta,y)$, $\widehat{m}_{21}^{(1)}(t,\theta,y)$ and $\widehat{m}_{22}^{(1)}(t,\theta,y)$:

\paragraph{Case $b=0,c\geqslant0$.} Here,
\begin{equation}\label{m_ij_b=0_c>0}
\left.\begin{aligned}
\widehat{m}_{11}^{(1)} (t,\theta,y) &= e^{i (\theta,y)}e^{a(\theta)t};\\
\widehat{m}_{21}^{(1)} (t,\theta,y) &=
\begin{cases}
	\frac{c}{a(\theta)-d(\theta)}e^{i (\theta,y)}\bigl(e^{a(\theta)t} - e^{d(\theta)t}\bigr),&\text{if}~\theta~\text{s.t.}~a(\theta)\ne d(\theta),\\
	cte^{i (\theta,y)} e^{a(\theta)t},&\text{if}~\theta~\text{s.t.}~a(\theta)=d(\theta);\
\end{cases}\\
\widehat{m}_{12}^{(1)} (t,\theta,y) &= 0;\\
\widehat{m}_{22}^{(1)} (t,\theta,y) &= e^{i (\theta,y)} e^{d(\theta)t}.
\end{aligned}\right\}
\end{equation}

\paragraph{Case $b\geqslant0,c=0$.} Here,
\begin{equation}\label{m_ij_b>0_c=0}
\left.\begin{aligned}
\widehat{m}_{11}^{(1)} (t,\theta,y) &= e^{i (\theta,y)}e^{a(\theta)t};\\
\widehat{m}_{21}^{(1)} (t,\theta,y) &= 0;\\
\widehat{m}_{12}^{(1)} (t,\theta,y) &=
\begin{cases}
	\frac{b}{a(\theta)-d(\theta)}e^{i (\theta,y)}\bigl(e^{a(\theta)t} - e^{d(\theta)t}\bigr),&\text{if}~\theta~\text{s.t.}~a(\theta)\ne d(\theta),\\
	bte^{i (\theta,y)} e^{d(\theta)t},&\text{if}~\theta~\text{s.t.}~a(\theta)=d(\theta);
\end{cases}\\
\widehat{m}_{22}^{(1)} (t,\theta,y) &= e^{i (\theta,y)} e^{d(\theta)t}.
\end{aligned}\right\}
\end{equation}

\paragraph{Case $b>0,c>0$.} Here,
\begin{equation}\label{m_ij_b>0_c>0}
\left.\begin{aligned}
	\widehat{m}_{11}^{(1)} (t,\theta,y) &= \frac{1}{\sqrt{D(\theta)}}e^{i (\theta,y)} \bigl(\bigl(a(\theta) - \lambda_2(\theta)\bigr) e^{\lambda_1(\theta) t} + \bigl(\lambda_1(\theta) - a(\theta)\bigr) e^{\lambda_2(\theta) t} \bigr);\\
	\widehat{m}_{21}^{(1)} (t,\theta,y) &= \frac{c}{\sqrt{D(\theta)}}e^{i (\theta,y)} \bigl(e^{\lambda_1(\theta) t} - e^{\lambda_2(\theta) t}\bigr);\\
	\widehat{m}_{12}^{(1)} (t,\theta,y) &= \frac{b}{\sqrt{D(\theta)}}e^{i (\theta,y)} \bigl(e^{\lambda_1(\theta) t} - e^{\lambda_2(\theta) t} \bigr);\\
	\widehat{m}_{22}^{(1)} (t,\theta,y) &= \frac{1}{\sqrt{D(\theta)}}e^{i (\theta,y)} \bigl(\bigl(\lambda_1(\theta) - a(\theta)\bigr)e^{\lambda_1(\theta) t} + \bigl(\lambda_2(\theta) - a(\theta)\bigr)e^{\lambda_2(\theta) t}\bigr).
\end{aligned}\right\}
\end{equation}

\begin{remark}
The cases $b=0,c>0$ or $b>0,c=0$ can describe the situation when particles of one type cannot produce offsprings of both types. This can have the real-life interpretation: we have some species which have both ``male'' and ``female'' individuals and the ``male'' individuals cannot produce offspring.
\end{remark}

\begin{remark}
The attentive reader will notice that our constructions are redundant in a sense. In the middle of this section, we made an effort to go from equations for the functions $m_{ij}^{(1)}(t,x,y)$, $i,j=1,2$, to more general equations for the functions $m_{1}^{(1)} (t,x,y)$ and $m_{2}^{(1)} (t,x,y)$, and their Fourier transforms $\widehat{m}_{1}^{(1)} (t,\theta,y)$ and $\widehat{m}_{2}^{(1)} (t,\theta,y)$. Then, at the end of this section, we again return to the functions $m_{ij}^{(1)} (t,x,y)$, $i,j=1,2$ (or rather to their Fourier images $\widehat{m}_{ij}^{(1)} (t,\theta,y)$, $i,j=1,2$). We emphasize once again that from a technical point of view, this method of research is redundant, however, in our opinion, it contributes to a deeper understanding of the ``nature of things'' when analyzing the behavior of the functions $m_{ij}^{(1)} (t,x,y)$, $i,j=1,2$.
\end{remark}

So, we have found the solutions of Equations~\eqref{fourier_m_1(t,x,y)_new} and \eqref{fourier_m_2(t,x,y)_new}. Applying the inverse Fourier transform~\eqref{inv_fourier_transform} to Equations~\eqref{m_ij_b>0_c>0}, we can get the solutions for $m_{ij}^{(1)} (t,x,y)$, $i,j=1,2$. Later, we will find the asymptotic behavior of each subpopulation $m_{ij}^{(1)} (t,x,y)$ in one particular case.

\subsection{Asymptotic Behavior in the Case of Finite Variance of Jumps}\label{E:AsympFVJ}

In the previous section, in Equations~\eqref{m_ij_b=0_c>0}--\eqref{m_ij_b>0_c>0}, we found the solutions for the Fourier transform of the first moments of the subpopulations $\widehat{m}_{ij}^{(1)} (t,\theta,y)$, $i,j=1,2$. In this section, we obtain their asymptotic behavior in one particular case that is natural for applications.

\begin{remark}\label{remark_p(t,x,y)}
Consider the parabolic problem
\begin{equation}\label{parabolic_problem}
\frac{\partial p(t,x,y)}{\partial t} = (\mathcal L_i p(t,\cdot,y))(x),\qquad
p(0,x,y) = \delta_x (y)
\end{equation}
where operators $\mathcal L_i$, $i=1,2$, are defined in~\eqref{generator}.

By applying the discrete Fourier transform~\eqref{fourier_transform} to Equation~\eqref{parabolic_problem}, we find that the Fourier image $\widehat{p}(t,\theta,y)$ of the function $p(t,x,y)$ satisfies the Cauchy problem
\begin{equation}\label{solution_parabolic_problem}
\frac{\partial \widehat{p}(t,\theta,y)}{\partial t} = \varkappa \widehat{a}_i(\theta) \widehat{p}(t,\theta,y),\qquad
\widehat{p}(0,\theta,y) = e^{i(\theta,y)},
\end{equation}
whose solution can be found explicitly:
\begin{equation}\label{fourier_solution_parabolic_problem}
\widehat{p}(t,\theta,y) = e^{i (\theta,y)} e^{\varkappa\widehat{a}_i(\theta)t}.
\end{equation}
Applying the inverse Fourier transform to Equation~\eqref{fourier_solution_parabolic_problem}, we obtain the solution of Equation~\eqref{parabolic_problem}:
\begin{equation}\label{E-pi}
p(t,x,y)=\frac{1}{(2\pi)^{d}}\int_{[-\pi,\pi]^{d}}e^{\varkappa\widehat{a}_i(\theta)t+i(\theta,y-x)}\,d\theta
\end{equation}
Besides, from here we can see that $p(t,x,y)$ depends only on $x-y$, which gives an alternative proof to the corresponding assertion from Remark~\ref{R-uniform}.
\end{remark}

Now, turn to the problem of finding $m_{ij}^{(1)} (t,x,y)$, $i,j=1,2$. Let $a_1 (v) = a_2 (v) =: a_*(v)$ for all $v\in\mathbb Z^d$ and $\varkappa_1 = \varkappa_2 = \varkappa$, so the migration operators from Equation~\eqref{generator} are equal. Besides, consider the case when underlying random walk has finite variance of jumps, so
\begin{equation}\label{f_v_j}
\sum_{v\ne0} a_*(v)|v|^2 < \infty,
\end{equation}
where $|\cdot|$ is the vector norm in $\mathbb R^d$.

As was demonstrated, e.g., in~\cite{YarBRW:e} under condition~\eqref{f_v_j}, the solution of the parabolic problem~\eqref{parabolic_problem} has, for each $x,y\in\mathbb{Z}^{d}$, the following asymptotics:
\begin{equation}\label{asymptotic_f_v_j}
p(t,x,y) \sim \frac{\gamma_d}{t^{d/2}},\qquad  t\to\infty,
\end{equation}
where
\begin{equation}\label{gamma_d}
\gamma_d = \Bigl((2\pi)^d |\det (\varkappa \widehat{a}''_* (0))|\Bigr)^{-1/2}
\end{equation}
is a constant depending on the lattice dimension. For a more detailed description of asymptotics~\eqref{asymptotic_f_v_j}, including the form of the reminder term, see~\cite{MY12}.

Let us now apply the above reasoning to Equations~\eqref{fourier_m_1(t,x,y)_new} and \eqref{fourier_m_2(t,x,y)_new}.
Note that in the case where the migration operators $\mathcal L_1$ and $\mathcal L_2$ defined by Equation~\eqref{generator} coincide, i.e., $\mathcal L_1=\mathcal L_2$, we can refine the representation of Equation~\eqref{lambda} for $\lambda_{1}(\theta)$ and $\lambda_{2}(\theta)$ by using Equation~\eqref{D}, which yields:
\[
\lambda_{1,2} (\theta) = \frac{a(\theta)+d(\theta)}{2} \pm \frac{((a(\theta)-d(\theta))^2 + 4bc)^{1/2}}{2} = \varkappa \widehat{a} (\theta) + C_1 \pm C_2,
\]
where
\begin{equation}\label{C_1_C_2}
C_1 = \frac{a(\theta)+d(\theta)}{2} - \varkappa \widehat{a}(\theta),\quad C_2 = \frac{((a(\theta)-d(\theta))^2 + 4bc)^{1/2}}{2}.
\end{equation}
Replacing $a(\theta),b,c$ and $d(\theta)$ in Equation~\eqref{C_1_C_2} by their values given by Equations~\eqref{a}--\eqref{d} we obtain the following representations of $C_1$ and $C_2$:
\begin{align*}
C_1 &= \frac{1}{2}\sum_{k+l\geqslant2} \bigl[(k-1)\beta_1 (k,l) + (l-1)\beta_2 (k,l)\bigr] - (\mu_1 + \mu_2);\\
C_2 &= \frac{1}{2} \biggl[ \Bigl( \sum_{k+l\geqslant2} \bigl[(k-1)\beta_1 (k,l) - (l-1)\beta_2 (k,l)\bigr] - (\mu_1 - \mu_2) \Bigr)^2 \\&\quad+ 4\Bigl( \sum_{k+l\geqslant2} l\beta_1 (k,l) \Bigr)\Bigl( \sum_{k+l\geqslant2} k\beta_2 (k,l) \Bigr) \biggr]^{1/2}.
\end{align*}

Let us denote
\[
r_1 = \sum_{k+l\geqslant2} (k-1)\beta_1 (k,l) - \mu_1,\quad
r_2 = \sum_{k+l\geqslant2} (l-1)\beta_2 (k,l) - \mu_2.
\]
Then, in the case when $\varkappa_1 = \varkappa_2=\varkappa$, $b=0$, $c>0$ (or $b>0$, $c=0$) due to Equations~\eqref{a} and \eqref{d}, the following relation holds:%newly added information
%Accepted
\[
a(\theta)-d(\theta)=r_{1}-r_{2}\quad\text{for all}~\theta,
\]
that is, the difference $a(\theta)-d(\theta)$ does not depend on $\theta$. This means that either $a(\theta)-d(\theta)=0$ for all values of $\theta$ or $a(\theta)-d(\theta)\neq0$ also for all values of $\theta$. Moreover,
\[
a(\theta) - d(\theta) = 0~\text{for all}~\theta\quad \Longleftrightarrow\quad r_1 - r_2 = 0\quad \Longleftrightarrow\quad C_2 = 0.
\]
Consequently, in the case $r_{1}=r_{2}$ we have not only that $a_1 (v) = a_2 (v)$ for all $v\in\mathbb Z^d$, but also $\widehat{a}_1 (\theta) = \widehat{a}_2 (\theta)$ for all $\theta\in[-\pi,\pi]^d$.

Thus, from Equations~\eqref{m_ij_b=0_c>0}--\eqref{m_ij_b>0_c>0} we have for $t\to\infty$ (we prefer to consider the case $b=c=0$ separately from other cases):
\paragraph{Case $b=0,c=0$.} Here, for each $x,y\in\mathbb{Z}^{d}$,
\begin{equation*}
\begin{aligned}
	m_{11}^{(1)} (t,x,y) &\sim e^{r_1 t}\frac{\gamma_d}{t^{d/2}};\\
	m_{21}^{(1)} (t,x,y) &= 0;\\
	m_{12}^{(1)} (t,x,y) &= 0;\\
	m_{22}^{(1)} (t,x,y) &\sim e^{r_2 t}\frac{\gamma_d}{t^{d/2}}.
\end{aligned}
\end{equation*}

\paragraph{Case $b=0,c>0$.} Here, for each $x,y\in\mathbb{Z}^{d}$,
\begin{equation*}
\begin{aligned}
	m_{11}^{(1)} (t,x,y) &\sim e^{r_1 t}\frac{\gamma_d}{t^{d/2}};\\
	m_{21}^{(1)} (t,x,y) &\sim \begin{cases}
		ce^{r_1 t}\frac{\gamma_d}{t^{d/2 - 1}},&\text{if}~C_2 = 0,\\
		\frac{c}{r_1 - r_2}\bigl(e^{r_1 t} - e^{r_2 t}\bigr)\frac{\gamma_d}{t^{d/2}},&\text{if}~C_2 \ne0;
		\end{cases}\\
	m_{12}^{(1)} (t,x,y) &= 0;\\
	m_{22}^{(1)} (t,x,y) &\sim e^{r_2 t}\frac{\gamma_d}{t^{d/2}}.
\end{aligned}
\end{equation*}

\paragraph{Case $b>0,c=0$.} Here, for each $x,y\in\mathbb{Z}^{d}$,
\begin{equation*}
\begin{aligned}
	m_{11}^{(1)} (t,x,y) &\sim e^{r_1 t}\frac{\gamma_d}{t^{d/2}};\\
	m_{21}^{(1)} (t,x,y) &= 0;\\
	m_{12}^{(1)} (t,x,y) &\sim \begin{cases}
		be^{r_2 t} \frac{\gamma_d}{t^{d/2 - 1}},&\text{if}~C_2 = 0,\\
		\frac{b}{r_1 - r_2}\bigl(e^{r_1 t} - e^{r_2 t}\bigr)\frac{\gamma_d}{t^{d/2}},&\text{if}~C_2 \ne0;
		\end{cases}\\
	m_{22}^{(1)} (t,x,y) &\sim e^{r_2 t}\frac{\gamma_d}{t^{d/2}}.
\end{aligned}
\end{equation*}

\paragraph{Case $b>0,c>0$.} Here, for each $x,y\in\mathbb{Z}^{d}$,
\begin{equation*}
\begin{aligned}
	m_{11}^{(1)} (t,x,y) &\sim \frac{e^{C_1 t}}{2C_2}\Bigl(\bigl(r_1 - C_1 + C_2\bigr) e^{C_2 t} + \bigl(C_1 + C_2 - r_1\bigr) e^{- C_2 t} \Bigr)\frac{\gamma_d}{t^{d/2}};\\
	m_{21}^{(1)} (t,x,y) &\sim \frac{ce^{C_1 t}}{2C_2} \Bigl(e^{C_2 t} - e^{- C_2 t}\Bigr)\frac{\gamma_d}{t^{d/2}};\\
	m_{12}^{(1)} (t,x,y) &\sim \frac{be^{C_1 t}}{2C_2}\Bigl(e^{C_2 t} - e^{ - C_2 t} \Bigr)\frac{\gamma_d}{t^{d/2}};\\
	m_{22}^{(1)} (t,x,y) &\sim \frac{e^{C_1 t}}{2C_2}\Bigl(C_1 + C_2 - r_1\bigr)e^{C_2 t} + \bigl(C_1 - C_2 - r_1\bigr)e^{- C_2 t}\Bigr)\frac{\gamma_d}{t^{d/2}}.
\end{aligned}
\end{equation*}

\section{The Second Moments}
\label{the_second_moments}

In this section, we will study the behavior of the second moments of the number of subpopulations. To do this, we will essentially use the technique developed in the previous section, so we will omit some technical details.

\subsection{Differential Equations for Moments}\label{E:DUM2}
Let us denote $m_{ij}^{(2)} (t,x,y) = \Expect n_{ij}^2 (t,x,y)$ and let the estimate Equation~\eqref{int_ineq} be true. Our goal in this section is to obtain differential equations for $m_{ij}^{(2)} (t,x,y)$, $i,j=1,2$, which are similar to those obtained for $m_{ij}^{(1)} (t,x,y)$, $i,j=1,2$, in Section~\ref{E:DUM1}.

By taking the partial derivatives of the functions $\Phi_i (t,x,y;z)$ in Equation~\eqref{subpop_gen_function} over $z_1$ and $z_2$ we can get the following equations:
\begin{align*}
\frac{\partial^2 \Phi_i (t,x,y;z)}{\partial z_j^2} &= \frac{\partial^2 \Expect z_1^{n_{i1} (t,x,y;z)} z_2^{n_{i2} (t,x,y)}}{\partial z_j^2} \notag\\&= \Expect n_{ij} (t,x,y)(n_{ij} (t,x,y) - 1)z_1^{n_{i1} (t,x,y) - 2\delta_j (1)} z_2^{n_{i2} (t,x,y) - 2\delta_j (2)}
\end{align*}

Then, by fixing $z_1 = z_2 = 1$ in the last equation, we obtain
\begin{equation}\label{second_factorial_moment}
\frac{\partial^2 \Phi_i (t,x,y;z)}{\partial z_j^2}\Bigr|_{z=(1,1)} = \Expect n_{ij} (t,x,y) (n_{ij} (t,x,y) - 1) = m_{ij}^{(2)} (t,x,y) - m_{ij}^{(1)} (t,x,y).
\end{equation}

Now, by differentiating both sides of Equation~\eqref{E:L21-1} from Lemma~\ref{lemma_gen_function} over $z_j$ twice, we obtain:
\begin{align*}
\frac{\partial^3 \Phi_i (t,x,y;z)}{\partial z_j^2 \partial t} &= \partial_{z_j z_j} \Bigl((\mathcal L_i \Phi_i (t,\cdot,y;z))(x)+ \mu_i (1-\Phi_i (t,x,y;z))\\&\quad + F_i (\Phi_1 (t,x,y;z),\Phi_2 (t,x,y;z))\Bigr).
\end{align*}
Taking here $z=(1,1)$ and using the notation
\[
m_{ij}^{(2!)} (t,x,y) = m_{ij}^{(2)} (t,x,y) - m_{ij}^{(1)} (t,x,y)
\]
we obtain the following representation of the left-hand side of Equation~\eqref{second_factorial_moment}:
\begin{equation}\label{second_factorial_moment_left_hand_part}
\frac{\partial^3 \Phi_i (t,x,y;z)}{\partial z_j^2 \partial t}\Bigr|_{z=(1,1)}= \frac{\partial}{\partial t} \frac{\partial^2 \Phi_i (t,x,y;z)}{\partial z_j^2}\Bigr|_{z=(1,1)} =
\frac{\partial m_{ij}^{(2!)} (t,x,y)}{\partial t}
\end{equation}
while the right-hand side of the same equation equals to:
\begin{align}\notag
&\partial_{z_j z_j} \Bigl( (\mathcal L_i \Phi_i (t,\cdot,y;z))(x) + \mu_i (1-\Phi_i (t,x,y;z))\\\notag
&\quad+ \sum_{k+l\geqslant2} \beta_i (k,l) (\Phi_1^k (t,x,y;z)\Phi_2^l (t,x,y;z) -\Phi_i (t,x,y;z)) \Bigr)\Bigr|_{z=(1,1)}\\
& = \Bigl( \bigl(\mathcal L_i (\partial_{z_j z_j} \Phi_i (t,\cdot,y;z))\bigr)(x) - \mu_i \bigl(\partial_{z_j z_j} \Phi_i (t,x,y;z)\bigr) + \sum_{k+l\geqslant2} \beta_i (k,l)\notag\\&\quad\times \Bigl( k(k-1)\bigl(\partial_{z_j} \Phi_1 (t,x,y;z)\bigr)^2 \Phi_1^{k-2} (t,x,y;z) \Phi_2^{l} (t,x,y;z) + k\bigl(\partial_{z_j z_j} \Phi_1 (t,x,y;z)\bigr)\notag\\&\quad\times\Phi_1^{k-1} (t,x,y;z) \Phi_2^{l} (t,x,y;z) + 2kl\bigl(\partial_{z_j} \Phi_1 (t,x,y;z)\bigr)\bigl(\partial_{z_j} \Phi_2 (t,x,y;z)\bigr)\Phi_1^{k-1} (t,x,y;z)\notag\\&\quad\times \Phi_2^{l-1} (t,x,y;z) + l(l-1)\bigl(\partial_{z_j} \Phi_2 (t,x,y;z)\bigr)^2 \Phi_1^{k} (t,x,y;z) \Phi_2^{l-2} (t,x,y;z)\notag \\&\quad+ l\bigl(\partial_{z_j z_j} \Phi_2 (t,x,y;z)\bigr)\Phi_1^{k} (t,x,y;z) \Phi_2^{l-1} (t,x,y;z) - (\partial_{z_j z_j} \Phi_i (t,x,y;z)\bigr)\Bigr)\Bigr)\Bigr|_{z=(1,1)} \notag\\&= (\mathcal L_i m_{ij}^{(2!)} (t,\cdot,y))(x) - \mu_i m_{ij}^{(2!)} (t,x,y) + \sum_{k+l\geqslant2} \beta_i (k,l) \Bigl( k(k-1)\bigl(m_{1j}^{(1)} (t,x,y)\bigr)^2\notag\\&\quad + k m_{1j}^{(2!)} (t,x,y) + 2kl m_{1j}^{(1)} (t,x,y) m_{2j}^{(1)} (t,x,y) + l(l-1)\bigl(m_{2j}^{(1)} (t,x,y)\bigr)^2\notag\\&\quad + lm_{2j}^{(2!)} (t,x,y) - m_{ij}^{(2!)} (t,x,y) \Bigr).\label{second_factorial_moment_right_hand_part}
\end{align}

By equating Equation~\eqref{second_factorial_moment_left_hand_part} with~\eqref{second_factorial_moment_right_hand_part} we get

\begin{align}\notag
\frac{\partial m_{ij}^{(2!)} (t,x,y)}{\partial t} &= (\mathcal L_i m_{ij}^{(2!)} (t,\cdot,y))(x) - \mu_i m_{ij}^{(2!)} (t,x,y) + \sum_{k+l\geqslant2} \beta_i (k,l)\Bigl( km_{ij}^{(2)} (t,x,y) \\\notag&\quad+ k(k-1)[m_{1j}^{(1)} (t,x,y)]^2 + lm_{2j}^{(2)} (t,x,y) + l(l-1)[m_{2j}^{(1)} (t,x,y)]^2 \\\notag&\quad+ 2klm_{1j}^{(1)} (t,x,y) m_{2j}^{(1)} (t,x,y) - km_{1j}^{(1)} (t,x,y) - lm_{2j}^{(1)} (t,x,y)\Bigr) \\\label{second_factorial_moment_equation}&\quad- \sum_{k+l\geqslant2} \beta_i (k,l) m_{ij}^{(2!)} (t,x,y);\\
\label{second_factorial_moment_initcond} m_{ij}^{(2!)} (0,x,y) &\equiv 0.
\end{align}

Finally, to obtain the differential equations for the second moments $m_{ij}^{(2)} (t,x,y)$, we add the term $\partial_t m_{ij}^{(1)} (t,x,y)$ to each side of Equation~\eqref{second_factorial_moment_equation}. Then, we substitute the term $m_{ij}^{(1)} (t,x,y)$ on the right side of the obtained expression by its representation~\eqref{El21-1}. Then, the left side of the resulting equation takes the form $\partial_t m_{ij}^{(2)} (t,x,y)$, while the right side is equal~to
\begin{align*}
&(\mathcal L_i m_{ij}^{(2!)} (t,\cdot,y))(x) - \mu_i m_{ij}^{(2!)} (t,x,y) + \\&\quad\sum_{k+l\geqslant2} \beta_i (k,l) \Bigl( k(k-1)\bigl(m_{1j}^{(1)} (t,x,y)\bigr)^2 + k m_{1j}^{(2!)} (t,x,y)+ 2kl m_{1j}^{(1)} (t,x,y) m_{2j}^{(1)} (t,x,y) \\&\quad+ l(l-1)\bigl(m_{2j}^{(1)} (t,x,y)\bigr)^2 + lm_{2j}^{(2!)} (t,x,y) - m_{ij}^{(2!)} (t,x,y) \Bigr)+ (\mathcal L_i m_{ij}^{(1)} (t,\cdot,y))(x) \\&\quad - \mu_i m_{ij}^{(1)} (t,x,y) + \sum_{k+l\geqslant2} \beta_i (k,l) (km_{1j}^{(1)} (t,x,y) + lm_{2j}^{(1)} (t,x,y)- m_{ij}^{(1)} (t,x,y)) \\&\quad = (\mathcal L_i m_{ij}^{(2)} (t,\cdot,y))(x) - \mu_i m_{ij}^{(2)} (t,x,y) + \sum_{k+l\geqslant2} \beta_i (k,l) \Bigl( km_{1j}^{(2)} (t,x,y)+ lm_{2j}^{(2)} (t,x,y) \\&\quad + k m_{1j}^{(2!)} (t,x,y) + 2kl m_{1j}^{(1)} (t,x,y) m_{2j}^{(1)} (t,x,y) + l(l-1)\bigl(m_{2j}^{(1)} (t,x,y)\bigr)^2 - m_{ij}^{(2)} (t,x,y) \Bigr)
\end{align*}

Thus, we have proved the following lemma.

\begin{lemma}\label{lemma_second_moments}
Let condition~\eqref{int_ineq} hold. Then, the functions $m_{ij}^{(2)} (t,x,y)$, $i,j=1,2$, satisfy the differential equations
\begin{align}\notag
\frac{\partial m_{ij}^{(2)} (t,x,y)}{\partial t} &= (\mathcal L_i m_{ij}^{(2)} (t,\cdot,y))(x) - \mu_i m_{ij}^{(2)} (t,x,y) + \sum_{k+l\geqslant2} \beta_i (k,l) \Bigl(km_{1j}^{(2)} (t,x,y) \\\notag&\quad+ k(k-1)[m_{1j}^{(1)} (t,x,y)]^2 + lm_{2j}^{(2)} (t,x,y) + l(l-1)[m_{2j}^{(1)} (t,x,y)]^2 \\\label{sec_mom_diff_eq}&\quad+ 2klm_{1j}^{(1)} (t,x,y) m_{2j}^{(1)} (t,x,y)\Bigr) - \sum_{k+l\geqslant2} \beta_i (k,l) m_{ij}^{(2)} (t,x,y)
\end{align}
with the initial condition
\begin{equation}
\label{sec_mom_init_cond}m_{ij}^{(2)} (0,x,y) = \delta_i (j) \delta_x (y).
\end{equation}
\end{lemma}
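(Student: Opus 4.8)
The plan is to follow the same route used for the first moments in Lemma~\ref{lemma_first_moments}, but to carry the differentiation of the generating-function equation~\eqref{E:L21-1} one order further. First I would fix $i,j\in\{1,2\}$ and differentiate~\eqref{subpop_gen_function} twice in $z_j$; evaluating at $z=(1,1)$ gives the \emph{second factorial moment}
\[
\frac{\partial^2\Phi_i}{\partial z_j^2}\Big|_{z=(1,1)}=\Expect n_{ij}(n_{ij}-1)=m_{ij}^{(2)}-m_{ij}^{(1)}=:m_{ij}^{(2!)},
\]
so it is natural to work with $m_{ij}^{(2!)}$ first and recover $m_{ij}^{(2)}$ at the very end. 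The analyticity of $F_i$ near $z=(1,1)$ guaranteed by Remark~\ref{rem_carleman} (together with the $l_p(\mathbb Z^d)$-boundedness recorded in Remark~\ref{R:moments1}) is what licenses differentiating~\eqref{E:L21-1} twice in $z_j$ and interchanging $\partial_{z_j}^2$ with $\partial_t$ and with the bounded operator $\mathcal L_i$.

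Second, I would apply $\partial_{z_j}^2$ to the right-hand side of~\eqref{E:L21-1}. The migration term and the linear $-\mu_i\Phi_i$ and $-\sum\beta_i(k,l)\Phi_i$ terms transform in the obvious way. The only genuine computation is $\partial_{z_j}^2$ of $\Phi_1^k\Phi_2^l$: the product rule produces the five terms with coefficients $k(k-1)$, $k$, $2kl$, $l(l-1)$, $l$ displayed in the derivation preceding~\eqref{second_factorial_moment_right_hand_part}; setting $z=(1,1)$ and using $\Phi_m|_{z=(1,1)}=1$, $\partial_{z_j}\Phi_m|_{z=(1,1)}=m_{mj}^{(1)}$ and $\partial_{z_j}^2\Phi_m|_{z=(1,1)}=m_{mj}^{(2!)}$ turns these into the stated combination of first-moment squares, first-moment products, and second factorial moments. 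This yields the closed linear inhomogeneous equation~\eqref{second_factorial_moment_equation}--\eqref{second_factorial_moment_initcond} for $m_{ij}^{(2!)}$, whose inhomogeneity is a known quadratic form in the already-solved first moments.

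Third, to pass from $m_{ij}^{(2!)}$ to $m_{ij}^{(2)}=m_{ij}^{(2!)}+m_{ij}^{(1)}$ I would add $\partial_t m_{ij}^{(1)}$ to both sides of~\eqref{second_factorial_moment_equation} and replace it on the right by its expression from~\eqref{El21-1}. After cancelling the $\pm km_{1j}^{(1)}$, $\pm lm_{2j}^{(1)}$ and $\pm m_{ij}^{(1)}$ contributions and merging $m_{ij}^{(2!)}+m_{ij}^{(1)}=m_{ij}^{(2)}$ inside the linear part, what remains is exactly~\eqref{sec_mom_diff_eq}. The initial condition is immediate: $m_{ij}^{(2)}(0,x,y)=\Expect n_{ij}(0,x,y)^2=(\delta_i(j)\delta_x(y))^2=\delta_i(j)\delta_x(y)$, which is~\eqref{sec_mom_init_cond}.

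The main obstacle is purely the bookkeeping in the second step: keeping track of all five terms coming from $\partial_{z_j}^2(\Phi_1^k\Phi_2^l)$ — in particular the mixed $2kl\,m_{1j}^{(1)}m_{2j}^{(1)}$ contribution and the careful distinction between the factorial pieces $m_{mj}^{(2!)}$ and the honest second moments $m_{mj}^{(2)}$ — and making sure the $k$ and $l$ subscripts inside $\beta_i(k,l)$ are never transposed when $i=2$. Everything else (the analyticity justification, boundedness of $\mathcal L_i$ on $l_p$, and the fact that the resulting equation is a well-posed linear evolution equation in a Banach space) is routine and parallels Section~\ref{E:DUM1}.
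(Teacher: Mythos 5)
Your proposal is correct and follows essentially the same route as the paper: differentiate the generating-function equation~\eqref{E:L21-1} twice in $z_j$, evaluate at $z=(1,1)$ to obtain the closed equation~\eqref{second_factorial_moment_equation} for the factorial moment $m_{ij}^{(2!)}$, and then add $\partial_t m_{ij}^{(1)}$ (replaced by its expression from~\eqref{El21-1}) to pass to $m_{ij}^{(2)}$. The bookkeeping of the five terms from $\partial_{z_j}^2(\Phi_1^k\Phi_2^l)$ and the initial-condition computation are exactly as in the paper's derivation.
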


\begin{remark}\label{R:moments2}
Similar considerations as in Remark~\ref{R:moments1} show that in this case Equation~\eqref{sec_mom_diff_eq} can be treated as a linear differential equation in a Banach space whose right-hand side (for each $t$ and $y$) is a linear bounded operator acting in any of the spaces $l_{p}(\mathbb{Z}^{d})$, $p\ge 1$. Therefore, for the same reasons as in Remark~\ref{R:moments1}, we obtain that $m_{ij}^{(2)} (t,x,y)$ (for each $t$ and $y $) as a function of the variable $x$ belongs to each of the spaces $l_{p}(\mathbb{Z}^{d})$, $p\ge 1$, and is thus bounded.
\end{remark}

So, we have obtained the differential equations for the second moments $m_{ij}^{(2)} (t,x,y)$. In the next section, we will find the solutions for the obtained equations.

\subsection{Solutions of Differential Equations for the Second Moments}

In this section, (as in Section~\ref{solution_first_moments}), we will consider the equations for $m_{ij}^{(2)} (t,x,y)$, $i,j=1,2$, which we obtained in Lemma~\ref{lemma_second_moments}, explicitly in terms of the Fourier transform.
To do this, let us apply the Fourier transform~\eqref{fourier_transform} to the pair of functions $(m_{1j}^{(2)} (t,x,y),m_{2j}^{(2)} (t,x,y))$, $j=1,2$. Then, using the notation~\eqref{a}--\eqref{d} from Section~\ref{the_first_moments}, we obtain
\begin{align}\label{sec_mom_m_1j_fourier}
\frac{\partial\widehat{m}_{1j}^{(2)} (t,\theta,y)}{\partial t} &= a(\theta) \widehat{m}_{1j}^{(2)} (t,\theta,y) + b m_{2j}^{(2)} (t,\theta,y) + f_1^{(j)} (t,\theta,y),\\
\label{sec_mom_m_2j_fourier}
\frac{\partial\widehat{m}_{2j}^{(2)} (t,\theta,y)}{\partial t} &= c \widehat{m}_{1j}^{(2)} (t,\theta,y) + d(\theta) m_{2j}^{(2)} (t,\theta,y) + f_2^{(j)}(t,\theta,y),
\end{align}
where
\begin{equation}\label{sec_mom_m_1j_m_2j_fourier_init_cond}
\widehat{m}_{ij}^{(2)} (0,\theta,y) = \delta_i (j) e^{i (\theta,y)},\qquad  i=1,2,
\end{equation}
and
\begin{align}\notag
f_i^{(j)} (t,\theta,y) &= \sum_{k+l\geqslant2} \beta_i (k,l) \biggl[k(k-1)\Bigl(\widehat{m}_{1j}^{(1)} * \widehat{m}_{1j}^{(1)}\Bigr)(t,\theta,y) \\\label{f-def}&\quad+ 2kl\Bigl(\widehat{m}_{1j}^{(1)} * \widehat{m}_{2j}^{(1)}\Bigr)(t,\theta,y) + l(l-1)\Bigl(\widehat{m}_{2j}^{(1)} * \widehat{m}_{2j}^{(1)}\Bigr)(t,\theta,y)\biggr].
\end{align}

Here,
\[
(F * G)(t,\theta,y) = \biggl(\frac{1}{2\pi}\biggr)^d \int_{[-\pi,\pi]^d} F(t,\theta-v,y)G(t,v,y)\,dv,
\]
i.e., $(F * G)(t,\theta,y)$ is the convolution of the functions $F(t,\theta,y)$ and $G(t,\theta,y)$ with respect to the variable $\theta$.

In what follows we will need the explicit form of the solution of the following linear differential equation:
\[
\frac{dx(t)}{dt} = k x(t) + f(t).
\]

This solution can be readily obtained by the method of \emph{variation of parameters}, also known as the method of \emph{variation of constants}:
\begin{equation}\label{linear_ODE}
x(t)=e^{kt}x(0)+\int_{0}^{t}e^{k(t-s)}f(s)\,ds =
e^{kt} \biggl(x(0)+ \int_0^t f(s)e^{-ks}\,ds \biggr).
\end{equation}

\paragraph{Case $b=c=0$.} Here, the functions $f_i^{(j)} (t,\theta,y)$, $i,j=1,2$, in Equations~\eqref{sec_mom_m_1j_fourier} are identically zero, i.e.,
\[
f_i^{(j)} (t,\theta,y) \equiv 0,~\text{for all}~i,j=1,2,
\]
while Equations~\eqref{sec_mom_m_1j_fourier} split into two independent homogeneous equations
\[
\frac{\partial\widehat{m}_{1j}^{(2)} (t,\theta,y)}{\partial t} = a(\theta)\widehat m_{1j}^{(2)} (t,\theta,y),\quad \frac{\partial\widehat{m}_{2j}^{(2)} (t,\theta,y)}{\partial t} = d(\theta)\widehat m_{2j}^{(2)} (t,\theta,y)
\]
with the initial conditions given by Equation~\eqref{sec_mom_m_1j_m_2j_fourier_init_cond}. Then, applying the formula~\eqref{linear_ODE} we can find the solutions of equations Equations~\eqref{sec_mom_m_1j_fourier}:
\begin{align*}
\widehat{m}_{11}^{(2)} (t,\theta,y) &= e^{i (\theta,y)} e^{a(\theta) t};\\
\widehat{m}_{21}^{(2)} (t,\theta,y) &= 0;\\
\widehat{m}_{12}^{(2)} (t,\theta,y) &= 0;\\
\widehat{m}_{22}^{(2)} (t,\theta,y) &= e^{i (\theta,y)} e^{d(\theta) t}.
\end{align*}

\paragraph{Case $b=0,c>0$.} Here the functions $f_1^{(j)} (t,\theta,y)$, $j=1,2$, in Equations~\eqref{sec_mom_m_1j_fourier} are identically zero, i.e.,
\[
f_1^{(j)} (t,\theta,y) \equiv 0,~\text{for}~j=1,2.
\]

\begin{align*}
\frac{\partial\widehat{m}_{1j}^{(2)} (t,\theta,y)}{\partial t} &= a(\theta) \widehat{m}_{1j}^{(2)} (t,\theta,y);\\
\frac{\partial\widehat{m}_{2j}^{(2)} (t,\theta,y)}{\partial t} &= c \widehat{m}_{1j}^{(2)} (t,\theta,y) + d(\theta) \widehat{m}_{2j}^{(2)} (t,\theta,y) + f_2^{(j)}(t,\theta,y).
\end{align*}

The solution of the first equation due to Formula~\eqref{linear_ODE} is clearly as follows:
\[
\widehat{m}_{1j}^{(2)} (t,\theta,y) = \widehat{m}_{1j}^{(2)} (0,\theta,y) e^{a(\theta)t} = \delta_1 (j) e^{i (\theta,y)} e^{a(\theta)t},
\]
where the first equality follows from Equation~\eqref{linear_ODE}, whereas the second equality follows from Equation~\eqref{sec_mom_m_1j_m_2j_fourier_init_cond}.

To solve the second equation, we again use Formula~\eqref{linear_ODE} assuming $x(t) = \widehat{m}_{2j}^{(2)} (t,\theta,y)$, $k = d(\theta)$ and $f(t) = c \widehat{m}_{1j}^{(2)} (t,\theta,y) + f_2^{(j)}(t,\theta,y)$. Then,
\[
\widehat{m}_{2j}^{(2)} (t,\theta,y) = e^{d(\theta)t} \biggl(\widehat{m}_{2j}^{(2)} (0,\theta,y)+ \int_0^t \Bigl(c \widehat{m}_{1j}^{(2)} (s,\theta,y) + f_2^{(j)}(s,\theta,y)\Bigr)e^{-d(\theta)s}\,ds \biggr),
\]
where by Equation~\eqref{sec_mom_m_1j_m_2j_fourier_init_cond} we have
\begin{align*}
\widehat{m}_{11}^{(2)} (0,\theta,y) &= e^{i (\theta,y)};&\widehat{m}_{21}^{(2)} (0,\theta,y) &= 0&\text{for}~j=1,\\
\widehat{m}_{12}^{(2)} (0,\theta,y) &= 0;&\widehat{m}_{22}^{(2)} (0,\theta,y) &= e^{i (\theta,y)}&\text{for}~j=2.
\end{align*}
Therefore, finally
\begin{align*}
\widehat{m}_{11}^{(2)} (t,\theta,y) &= e^{i (\theta,y)}e^{a(\theta)t};\\
\widehat{m}_{21}^{(2)} (t,\theta,y) &= e^{d(\theta)t} \biggl(\int_0^t e^{-d(\theta)s}\Bigl(ce^{i (\theta,y)}e^{a(\theta)s} + f_2^{(1)} (t,\theta,y)\Bigr)\,ds\biggr);\\
\widehat{m}_{12}^{(2)} (t,\theta,y) &= 0;\\
\widehat{m}_{22}^{(2)} (t,\theta,y) &= e^{d(\theta)t} \biggl(\int_0^t e^{-d(\theta)s}\Bigl(f_2^{(2)} (t,\theta,y)\Bigr)\,ds + e^{i (\theta,y)} \biggr).
\end{align*}

\paragraph{Case $b>0,c=0$.} Similarly to the previous case, here the functions $f_2^{(j)} (t,\theta,y)$, $j=1,2$, in Equations~\eqref{sec_mom_m_1j_fourier} are identically zero, i.e.,
\[
f_2^{(j)} (t,\theta,y) \equiv 0,~\text{for}~j=1,2.
\]
Then, Equations~\eqref{sec_mom_m_1j_fourier} also take the ``triangle'' form
\begin{align*}
\frac{\partial\widehat{m}_{1j}^{(2)} (t,\theta,y)}{\partial t} &= a(\theta) \widehat{m}_{1j}^{(2)} (t,\theta,y) + b\widehat{m}_{2j}^{(2)} (t,\theta,y) + f_1^{(j)}(t,\theta,y);\\
\frac{\partial\widehat{m}_{2j}^{(2)} (t,\theta,y)}{\partial t} &= d(\theta) \widehat{m}_{2j}^{(2)} (t,\theta,y).
\end{align*}

The solution of the second equation is equal to
\[
\widehat{m}_{2j}^{(2)} (t,\theta,y) = \widehat{m}_{2j}^{(2)} (0,\theta,y) e^{d(\theta)t} = \delta_2 (j) e^{i (\theta,y)} e^{d(\theta)t}.
\]
where again the first equality follows from Equation~\eqref{linear_ODE} whereas the second equality follows from Equation~\eqref{sec_mom_m_1j_m_2j_fourier_init_cond}.

To solve the first equation we apply the formula~\eqref{linear_ODE} with $x(t) = \widehat{m}_{1j}^{(2)} (t,\theta,y)$, $k = a(\theta)$ and $f(t) = b\widehat{m}_{2j}^{(2)} (t,\theta,y) + f_1^{(j)}(t,\theta,y)$. Then,
\[
\widehat{m}_{1j}^{(2)} (t,\theta,y) = e^{a(\theta)t} \biggl(\widehat{m}_{1j}^{(2)} (0,\theta,y)+ \int_0^t \Bigl(b\widehat{m}_{2j}^{(2)} (s,\theta,y) + f_1^{(j)}(s,\theta,y)\Bigr)e^{-a(\theta)s}\,ds \biggr).
\]
where by Equation~\eqref{sec_mom_m_1j_m_2j_fourier_init_cond} we have
\begin{align*}
\widehat{m}_{11}^{(2)} (0,\theta,y) &= e^{i (\theta,y)}; &\widehat{m}_{21}^{(2)} (0,\theta,y) &=  0&\text{for}~j=1,\\
\widehat{m}_{12}^{(2)} (0,\theta,y) &=  0; &\widehat{m}_{22}^{(2)} (0,\theta,y) &=  e^{i (\theta,y)}&\text{for}~j=2.
\end{align*}
Therefore,
\begin{align*}
\widehat{m}_{11}^{(2)} (t,\theta,y) &= e^{a(\theta)t} \biggl(\int_0^t e^{-a(\theta)s}\Bigl(f_1^{(j)} (t,\theta,y)\Bigr)\,ds + e^{i (\theta,y)} \biggr);\\
\widehat{m}_{21}^{(2)} (t,\theta,y) &= 0;\\
\widehat{m}_{12}^{(2)} (t,\theta,y) &= e^{a(\theta)t} \biggl(\int_0^t e^{-a(\theta)s}\Bigl(be^{i (\theta,y)}e^{d(\theta)s} + f_1^{(j)} (t,\theta,y)\Bigr)\,ds\biggr);\\
\widehat{m}_{22}^{(2)} (t,\theta,y) &= e^{i (\theta,y)}e^{d(\theta)t}.
\end{align*}

\paragraph{Case $b>0,c>0$.}
To address this case, we first recall the explicit form of the solution to the following linear differential equation:
\begin{equation}\label{linear_system_diff_eq}
\frac{dx(t)}{dt} = A x(t) + f(t),
\end{equation}
where $A$ is a matrix (in our problem $A$ is a $2\times2$ matrix) with time-independent (constant) entries and $f(t)$ is a column-vector function.

The solution of Equation~\eqref{linear_system_diff_eq} can be easily obtained by the method of \emph{variation of parameters}, see, e.g.,~\cite{Filipp} or any other textbook on linear differential equations:
\begin{equation}\label{ODE_system}
x(t)=U(t)x(0)+\int_{0}^{t}U(t-s)f(s)\,ds,
\end{equation}
where the matrix-function $U(t)$ is the so-called ``fundamental solution'' of Equation~\eqref{linear_system_diff_eq}.
Let $U(t) = \exp\{At\}$. Then

It is known~\cite{Filipp} that $U(t)$ can be expressed as $U(t) = \exp\{At\}$. However, for us, the following representation for  $U(t)$ will be more useful:
\[
U(t)=\left(\begin{array}{cc}u_{11}(t)&u_{12}(t)\\u_{21}(t)&u_{22}(t)\end{array}\right),
\]
where the vector-functions
\[
u_{1}(t)=\left(\begin{array}{c}u_{11}(t)\\u_{21}(t)\end{array}\right),\quad
u_{2}(t)=\left(\begin{array}{c}u_{12}(t)\\u_{22}(t)\end{array}\right)
\]
are solutions of the homogeneous system
\[
\frac{dx(t)}{dt} = A x(t),
\]
satisfying the initial conditions, respectively,
\[
u_{1}(0)=\left(\begin{array}{c}1\\0\end{array}\right),\quad
u_{2}(0)=\left(\begin{array}{c}0\\1\end{array}\right).
\]

The components $u_{ij}(t)$ of the solutions $u_{1}(t)$ and $u_{2}(t)$ can be computed by using calculations from Remark~\ref{Yrem_ODE} (case $b>0, c>0$). By doing the needed computations, we obtain:
\begin{align*}
u_{11} (t) &= \frac{1}{\lambda_1 (\theta) - \lambda_2 (\theta)} \biggl((a (\theta) - \lambda_2 (\theta)) e^{\lambda_1(\theta) t} + (\lambda_1 (\theta) - a(\theta)) e^{\lambda_2(\theta) t} \biggr)\\
u_{21} (t) &= \frac{c}{\lambda_1 (\theta) - \lambda_2 (\theta)} \biggl(e^{\lambda_1(\theta) t} - e^{\lambda_2(\theta) t}\biggr)\\
u_{12} (t) &= \frac{b}{\lambda_1 (\theta) - \lambda_2 (\theta)} \biggl(e^{\lambda_1(\theta) t} - e^{\lambda_2(\theta) t}\biggr)\\
u_{22} (t) &= \frac{1}{\lambda_1 (\theta) - \lambda_2 (\theta)} \biggl((\lambda_1(\theta) - a(\theta)) e^{\lambda_1(\theta) t} + (\lambda_2 (\theta)- a(\theta)) e^{\lambda_2(\theta) t} \biggr),
\end{align*}
where $\lambda_{1,2} (\theta)$ are specified by Equation~\eqref{lambda}.

Therefore, from Equations~\eqref{sec_mom_m_1j_m_2j_fourier_init_cond} and~\eqref{ODE_system} we obtain the following solutions for Equations~\eqref{sec_mom_m_1j_fourier} and \eqref{sec_mom_m_2j_fourier}:
%\\for $j=1$ (here, $f(t)$ in Equation~\eqref{linear_system_diff_eq} is as follows: $f(t) = [f_1^1 (t,\theta,y),f_2^1 (t,\theta,y)]^{T}$):
\begin{align*}
\widehat m_{11}^{(2)} (t,\theta,y) &= \frac{1}{\lambda_1 (\theta) - \lambda_2 (\theta)}\int_0^t \biggl( (a(\theta)-\lambda_2 (\theta))f_1^{(1)} (s,\theta,y) + bf_2^{(1)} (s,\theta,y) \biggr) e^{\lambda_1(\theta) (t-s)}\,ds \\&+ \frac{1}{\lambda_1 (\theta) - \lambda_2 (\theta)}\int_0^t \biggl( (\lambda_1 (\theta) - a(\theta))f_1^{(1)} (s,\theta,y) - bf_2^{(1)} (s,\theta,y) \biggr) e^{\lambda_2(\theta) (t-s)}\,ds \\&+ \frac{1}{\lambda_1 (\theta) - \lambda_2 (\theta)} \biggl((a(\theta) - \lambda_2(\theta)) e^{\lambda_1(\theta) t} + (\lambda_1(\theta) - a(\theta)) e^{\lambda_2(\theta) t} \biggr) e^{i (\theta,y)},
\end{align*}

%\begin{adjustwidth}{-\extralength}{0cm}
%\centering %% If there is a figure in wide page, please release command \centering
%Accepted.

\begin{align*}
\widehat m_{21}^{(2)} (t,\theta,y) &= \frac{1}{\lambda_1 (\theta) - \lambda_2 (\theta)}\int_0^t \biggl( cf_1^{(1)} (s,\theta,y) + (\lambda_1(\theta) - a(\theta))f_2^{(1)} (s,\theta,y) \biggr) e^{\lambda_1(\theta) (t-s)}\,ds \\&+ \frac{1}{\lambda_1 (\theta) - \lambda_2 (\theta)}\int_0^t \biggl( -cf_1^{(1)} (s,\theta,y) + (\lambda_2 (\theta) - a(\theta)) f_2^{(1)} (s,\theta,y) \biggr) e^{\lambda_2(\theta) (t-s)}\,ds \\&+ \frac{c}{\lambda_1 (\theta) - \lambda_2 (\theta)} \biggl( e^{\lambda_1 (\theta)t} - e^{\lambda_2 (\theta)t} \biggr) e^{i (\theta,y)},
\end{align*}
%\end{adjustwidth}

%and for $j=2$ (here, $f(t)$ in Equation~\eqref{linear_system_diff_eq} is as follows: $f(t) = [f_1^2 (t,\theta,y),f_2^2 (t,\theta,y)]^{T}$)
\begin{align*}
\widehat m_{12}^{(2)} (t,\theta,y) &= \frac{1}{\lambda_1 (\theta) - \lambda_2 (\theta)}\int_0^t \biggl( (a(\theta) - \lambda_2(\theta))f_1^{(2)} (s,\theta,y) + bf_2^{(2)} (s,\theta,y) \biggr) e^{\lambda_1(\theta) (t-s)}\,ds \\&+ \frac{1}{\lambda_1 (\theta) - \lambda_2 (\theta)}\int_0^t \biggl( (\lambda_1(\theta) - a(\theta)) f_1^{(2)} (s,\theta,y) - bf_2^{(1)} (s,\theta,y) \biggr) e^{\lambda_2(\theta) (t-s)}\,ds \\&+ \frac{b}{\lambda_1 (\theta) - \lambda_2 (\theta)} \biggl( e^{\lambda_1 (\theta)t} - e^{\lambda_2 (\theta)t} \biggr) e^{i (\theta,y)},
\end{align*}
\begin{align*}
\widehat m_{22}^{(2)} (t,\theta,y) &= \frac{1}{\lambda_1 (\theta) - \lambda_2 (\theta)}\int_0^t \biggl( cf_1^{(2)} (s,\theta,y) + (\lambda_2 (\theta)-a(\theta))f_2^{(2)} (s,\theta,y) \biggr) e^{\lambda_1(\theta) (t-s)}\,ds \\&+ \frac{1}{\lambda_1 (\theta) - \lambda_2 (\theta)}\int_0^t \biggl( -cf_1^{(1)} (s,\theta,y) + (\lambda_2 (\theta) - a(\theta))f_2^{(1)} (s,\theta,y) \biggr) e^{\lambda_2(\theta) (t-s)}\,ds \\&+ \frac{1}{\lambda_1 (\theta) - \lambda_2 (\theta)} \biggl((\lambda_1(\theta) - a(\theta)) e^{\lambda_1(\theta) t} + (\lambda_2(\theta) - a(\theta)) e^{\lambda_2(\theta) t} \biggr) e^{i (\theta,y)},
\end{align*}
where the functions $f_{i}^{(j)}(s,\theta,y)$ are defined by Equation~\eqref{f-def}.

\section{Clustering for BRWs with Two Types of Particles with a Critical Reproduction Law}\label{Cluster}

In this section, we consider BRWs with two types of particles satisfying the condition that the particle reproduction law at each lattice point is described by an irreducible critical two-type branching process and that the underlying random walks have finite variances of the jumps. We show that for particles of both types with the underlying recurrent random walks on $\mathbb Z^d$, a phenomenon of clustering of particles can be observed over long times, implying that the majority of particles are concentrated in some particular areas. We generalize the study started in~\cite{BMY:21} for BRW with one type of particles.

\subsection{Degeneration Probability}\label{S:DP}
In this section, based on the results for two-type critical branching processes we show that the probability of degeneracy of the subpopulation tends to $1$ for the underlying recurrent random walk. We also show that, at the same time, subpopulations that are not degenerate exhibit linear growth in $t$ at infinity.

Let us introduce some notation. Denote by $D = (d_{ij})$ the matrix with the elements
\[
d_{ij} := \frac{\partial (F_{i}(z_1, z_2)+\delta_i (1)\beta_1(1,0) z_1 + \delta_i (2)\beta_2(0,1)z_2)}{\partial z_j} \bigg|_{z = (1,1)},\qquad i,j=1,2,
\]
where $F_i(z_1,z_2)$ are the generating functions defined in Equation~\eqref{gen_function}.
We also define the densities of second factorial moments of $F_i (z_1,z_2)$ (cf. Equation~(4) in~\cite{Sevast}(Ch.~4, \S~7)) as
\[
b^{(i)}_{jk} := \frac{\partial^2 F_{i}(z_1, z_2)}{\partial z_j \partial z_k}\bigg|_{z = (1,1)},\qquad i,j,k = 1,2,
\]
and assume that condition~\eqref{int_ineq} holds, so that $d_{ij}$ and $b^{(i)}_{jk}$ are finite for all $i,j,k=1,2$.

Recall the following definition from~\cite{Sevast} (Ch.~4, \S~5, Def.~2):
\begin{definition}
A matrix $C = (c_{ij})$, $i,j=1,\ldots,n$ is called \emph{reducible} if there exist two subsets $S_1,S_2\in\{1,\ldots,n\}$ and $S_1\bigcap S_2 = \emptyset$ such that $c_{ij} = 0$ for all $i\in S_1$, $j\in S_2$. Otherwise, the matrix $C$ is called \emph{irreducible}.
\end{definition}

\begin{definition}
A branching process is called \emph{irreducible}~\cite{Sevast}(Ch.~4, \S~6, Th.~2) if the matrix $D$ is~irreducible.
\end{definition}

Now, note that due to~\eqref{E-mij}, $m_{ij}^{(1)} (t,x,y)\equiv m_{ij}^{(1)} (t,x-y,0)$ and then
\begin{equation}\label{sum-m1txy}
\sum_{y \in \mathbb{Z}^d} m_{ij}^{(1)} (t,x,y)= \sum_{y \in \mathbb{Z}^d} m_{ij}^{(1)} (t,x-y,0)=\sum_{z \in \mathbb{Z}^d} m_{ij}^{(1)} (t,z,0),\qquad i,j = 1,2,
\end{equation}
where the sum on the right-hand side is finite due to Remark~\ref{R:moments1}. Then, the matrix $D(t, x) := (d_{ij}(t,x))$ with elements
\[
d_{ij}(t, x) := \Expect \sum_{y \in \mathbb{Z}^d} n_{ij}(t, x, y) = \sum_{y \in \mathbb{Z}^d} m_{ij}^{(1)} (t,x,y),\qquad i,j = 1,2,
\]
is well-defined, i.e., its elements $d_{ij}(t, x)$ are finite. Moreover, the relations~\eqref{sum-m1txy} prove that the quantity $d_{ij}(t, x)$ does not indeed depend on the spatial coordinate $x$, i.e.,
\begin{equation}\label{dij}
d_{ij}(t, x)=d_{ij}(t)~\text{for all}~x\in\mathbb Z^d.
\end{equation}

Then, according to~\cite{Sevast}(Ch.~4, \S~7, Th.~5), we have
\begin{equation}\label{d_ij(t)}
d_{ij}(t)= u_i v_j e^{rt} + o(e^{r_1 t})\qquad\text{for}\quad t\to\infty, \qquad i,j = 1,2,
\end{equation}
where $r$ is the Perron root (see~\cite{Sevast}(Ch.~4, \S~5, Def.~6)) of the matrix $D$ and $r_1$ is some quantity satisfying $r_1<r$. We denote by
\[
u=(u_1,u_2),\qquad v=(v_1,v_2)
\]
the left and right eigenvectors,  respectively, corresponding to the eigenvalue $r$ of $D$.

\begin{definition}
An irreducible branching process is called \emph{critical}~\cite{Sevast}(Ch.~4, \S~7, Def.~2) if $r = 0$ and
\[
\sum_{i=1}^2 \sum_{j=1}^2 \sum_{k=1}^2 v_i b^{(i)}_{jk} u_j u_k > 0.
\]
\end{definition}

Let
\begin{equation}\label{n_i(t,x)}
n_i(t, x) = \sum_{j=1}^2 \sum_{y \in \mathbb{Z}^d}n_{ij}(t, x, y)
\end{equation}
be the number of particles in a subpopulation at time $t$ generated by a particle of the $i$-th type provided that at the initial moment of time the particle was at the point $x$.

\begin{remark}
Evaluate the quantity $n_i (t,x)$, $i=1,2$, at the time moment $t+dt$. Let $n_{ij} (t,x)$, $j=1,2$ be the number of offsprings of type $j$ generated by a single particle of type $i$, which at the time moment $t=0$ was located at the point $x\in\mathbb Z^d$, so that $n_i (t,x) = n_{i1} (t,x) + n_{i2} (t,x)$. Assume $G_i (t,x;z) = \Expect z^{n_i (t,x)}$. Then, by using the Kolmogorov forward equation, we obtain the following relations:
\begin{align*}
G_i (t+dt,x;z) &= \Expect z^{n_i (t+dt,x)}\\&= \Expect z^{n_i (t+dt,x)}\Bigl[ \sum_{k+l\geqslant2} \bigl(\beta_1 (k,l)n_{i1} (t,x) + \beta_2 (k,l)n_{i2} (t,x) \bigr)z^{k+l}dt \\&\quad+ \bigl(\mu_1 n_{i1} (t,x) + \mu_2 n_{i2} (t,x) \bigr)z^{-1}\,dt + \bigl(\varkappa_1 n_{i1} (t,x) + \varkappa_2 n_{i2} (t,x) \bigr)\,dt \\&\quad+ \bigl( 1 - \varkappa_1 n_{i1} (t,x)\,dt - \varkappa_2 n_{i2} (t,x)\,dt - \sum_{k+l\geqslant2} \bigl(\beta_1 (k,l)n_{i1} (t,x) \\&\quad+ \beta_2 (k,l)n_{i2} (t,x) \bigr)z^{k+l}\,dt \bigr) - \mu_1 n_{i1} (t,x)\,dt - \mu_2 n_{i2} (t,x)\,dt + o(dt) \Bigr] \\&= \Expect z^{n_i (t+dt,x)}\Bigl[ \sum_{k+l\geqslant2} \bigl(\beta_1 (k,l)n_{i1} (t,x) + \beta_2 (k,l)n_{i2} (t,x) \bigr)z^{k+l}\,dt \\&\quad+ \bigl(\mu_1 n_{i1} (t,x) + \mu_2 n_{i2} (t,x) \bigr)z^{-1}\,dt + \bigl( 1 - \sum_{k+l\geqslant2} \bigl(\beta_1 (k,l)n_{i1} (t,x) \\&\quad+ \beta_2 (k,l)n_{i2} (t,x) \bigr)z^{k+l}\,dt \bigr) - \mu_1 n_{i1} (t,x)\,dt - \mu_2 n_{i2} (t,x)\,dt + o(dt) \Bigr]
\end{align*}

From these relations, it can be seen that the behavior of the process $n_i (t,x)$ depends only on its ``branching component'' and the evolution of the process coincides with the evolution of the branching process with continuous time treated in~\cite{Sevast}. For this reason, we apply the results of~\cite{Sevast} in the following.
\end{remark}

\begin{remark}
Note that from Remark~\ref{R-uniform}, we have for all $k\in\mathbb Z_+$:
\begin{align*}
\Prob(n_i(t, x)=k) &= \Prob\Bigl(\sum_{j=1}^2 \sum_{y \in \mathbb{Z}^d}n_{ij}(t, x, y)=k\Bigr) \\&= \Prob \Bigl(\sum_{j=1}^2 \sum_{y \in \mathbb{Z}^d}n_{ij}(t, 0, y-x)=k\Bigr) = \Prob (n_i (t,0)=k).
\end{align*}
\end{remark}

Recall that the branching process under consideration is assumed to be critical and irreducible. In this case,~\cite{Sevast}(Ch.~6, \S~3, Th.~4) implies that the probability of non-degeneration of a subpopulation has the following asymptotic behavior for all $x\in\mathbb Z^d$ as $t\to\infty$:
\begin{equation}
\label{not deg}
\begin{split}
\Prob\big(n_i(t, x) > 0\big) &= \Prob\big(n_i(t, 0) > 0\big) = \frac{c_i}{t} + o\left(\frac{1}{t}\right) \to 0,\\
\Prob\big(n_i(t, x) = 0\big) &= \Prob\big(n_i(t, 0) = 0\big) = 1 - \frac{c_i}{t} + o\left(\frac{1}{t}\right) \to 1,
\end{split}
\end{equation}
where $c_i$ is a constant. Thus, the \emph{probability of degeneration} $\Prob\big(n_i(t, x) = 0\big)$ of the subpopulation tends to $1$ for all $x\in\mathbb Z^d$ as $t\to\infty$.

Now, we will estimate the conditional mathematical expectation
\[
\Expect \bigg(\sum_{y \in \mathbb{Z}^d} n_{ij}(t, x, y) \Big| n_i(t, x) > 0\bigg)
\]
which is the main object of the study in Section~\ref{S:DP}.
By the definition of conditional expectation we have for all $x\in\mathbb Z^d$
\[
\Expect \bigg(\sum_{y \in \mathbb{Z}^d} n_{ij}(t, x, y) \Big| n_i(t, x) > 0\bigg) = \frac{\Expect \Bigl(\sum_{y \in \mathbb{Z}^d} n_{ij}(t, x, y) \mathbb I\{n_i (t,x)>0\}\Bigr)}{\Prob (n_i (t,x)>0)},
\]
where $\mathbb I \{A\}$ is the indicator of the set $A$. Note that from Equation~\eqref{n_i(t,x)} it follows that
\[
n_i (t,x) = 0 \Longrightarrow \sum_{y\in\mathbb Z^d} n_{ij} (t,x,y) = 0\quad~\text{for}~j=1,2.
\]

Then, with the usage of the formula of total probability we have
\begin{align*}
\Expect \sum_{y \in \mathbb{Z}^d} n_{ij}(t, x, y) &= \Expect \Bigl( \sum_{y \in \mathbb{Z}^d} n_{ij}(t, x, y) \bigl(\mathbb I\{n_i (t,x)>0\} + \mathbb I\{n_i (t,x)=0\}\bigr) \Bigr) \\&= \Expect \sum_{y \in \mathbb{Z}^d} n_{ij}(t, x, y) \mathbb I\{n_i (t,x)>0\} + \Expect \sum_{y \in \mathbb{Z}^d} n_{ij}(t, x, y) \mathbb I\{n_i (t,x)=0\} \\&= \Expect \sum_{y \in \mathbb{Z}^d} n_{ij}(t, x, y) \mathbb I\{n_i (t,x)>0\}.
\end{align*}

Thus, from Equation~\eqref{dij} we obtain
\[
d_{ij}(t) = d_{ij} (t,x) = \Expect \sum_{y \in \mathbb{Z}^d} n_{ij}(t, x, y) = \Expect \bigg(\sum_{y \in \mathbb{Z}^d} n_{ij}(t, x, y) \Big| n_i(t, x) > 0\bigg) P\bigg(n_i(t, x) > 0\bigg).
\]

At the same time, substituting $r = 0$ in Equation~\eqref{d_ij(t)} we obtain $d_{ij} (t) = u_i v_j + o(1)$, whence, denoting $C_{ij} := \frac{u_i v_j}{c_i} = \mathrm{const}$ and using Equation~\eqref{not deg} we get
\begin{equation}
\label{eq: limth 2}
\Expect \bigg(\sum_{y \in \mathbb{Z}^d} n_{ij}(t, x, y) \Big| n_i(t, x) > 0\bigg) = \frac{u_i v_j + o(1)}{c_i/t+o(1/t)} = C_{ij}t + o(t)\quad\text{as}\quad  t \to \infty.
\end{equation}

In virtue of Equation~\eqref{not deg} we have that, in the case when $t\to\infty$, the probability of degeneration of the subpopulation $\Prob\big(n_i(t, x) = 0\big)$ tends to $1$.
At the same time, due to Equation~\eqref{eq: limth 2} those subpopulations that are not degenerate have a linear growth in $t$ at~infinity.

\subsection{Clustering}
In this section, we study the effect of clustering at each point for an irreducible critical branching process under the condition that the tail of a random walk is superexponentially light, i.e., for each $\lambda\in \mathbb{R}^{d}$, $i=1,2$, the following condition holds:
\[
\sum_{v \in \mathbb{Z}^d} e^{(\lambda,v)} a_i(v) < \infty.
\]

By $p_i(t, x, y)$, we denote the transition probability of the random walk on $\mathbb{Z}^d$ defined by $\mathcal{L}_i$, $i = 1, 2$, see Equation~\eqref{generator}.
From \cite{Gikhman}(Ch.~3, \S~2) it follows that $p_i(t, x, y)$ is the solution of the Cauchy problem
\[
\frac{\partial p_i(t,x,y)}{\partial t} = (\mathcal L_i p_i(t,\cdot,y))(x),\qquad
p_i(0,x,y) = \delta_x (y).
\]
Note that here, as was shown in Remark~\ref{remark_p(t,x,y)}, $p_i(t,x,y) = p_i(t,x-y,0) = p_i (t,0,y-x)$, which follows from the property of spatial homogeneity of the process under consideration.

Denote $y-x=s$, then from~\cite{MolchanovYarovaya2013}(Eq.~(4.7)) we have for $s = O(\sqrt{t})$ the following~equality:
\begin{equation}\label{p_i^d(t,0,s)}
p_i(t, 0, s) = \frac{e^{-(B_i^{-1}s, s)/(2t)}}{(2\pi t)^{d/2}\sqrt{\det B_i}}+o(t^{-d/2}),
\end{equation}
where $B_i = \bigl(b_i^{(kj)}\bigr)_{k,j}$, $i = 1,2$, is the matrix with the elements
\[
b_i^{(kj)} = \sum_{v} a_i(v) v^k v^j,\quad v=(v^1,\ldots,v^d),\quad k,j = 1,\ldots,d.
\]

For $d = 1$, the one-dimensional matrix $B_i$ is as follows: $B_i = \bigl( b_i^{(11)} \bigr)$. We denote $b_i := b_i^{(11)}$, then Equation~\eqref{p_i^d(t,0,s)} takes the form
\[
p_i^{(1)}(t, 0, s) = \frac{e^{-s^2/(2 b_i t)}}{\sqrt{2\pi b_i t}} + o(1/\sqrt{t}).
\]

Consider the probability that a particle located at point $0\in\mathbb Z^d$ will jump no further than a distance $C\sqrt{t}$, $C>0$ is some constant. Then,
\begin{align*}
\sum_{|s|<C\sqrt{t}} p_i^{(1)}(t, 0, s) &= p_i^{(1)}(t, 0, 0) + 2\sum_{s \in \mathbb{N}, s < C\sqrt{t} } p_i^{(1)}(t, 0, s) \\&> p_i^{(1)}(t, 0, 0) + 2\int_{1}^{C\sqrt{t}} \frac{e^{-\tau^2/(2 b_i t)}}{\sqrt{2\pi b_i t}}\,d\tau = 2\int_{0}^{C\sqrt{t}} \frac{e^{-\tau^2/(2 b_i t)}}{\sqrt{2\pi b_i t}}\,d\tau + o(1).
\end{align*}

Note that in the last equality, under the sign of integral is the function $f(\tau)$ such that
\[
f(\tau) = \frac{e^{-\tau^2/(2 b_i t)}}{\sqrt{2\pi b_i t}}.
\]

Function $f(\tau)$ is the density function of a random variable, which has normal distribution with mean $0$ and variance $b_i t$. Therefore, choosing appropriate constant $C>0$, we can make the quantity
\[
2\int_{0}^{C\sqrt{t}} \frac{e^{-\tau^2/(2 b_i t)}}{\sqrt{2\pi b_i t}}\,d\tau
\]
to be arbitrarily close to 1. Hence, for every $\varepsilon > 0$ there exists $C >0$ such that
\begin{equation}
\label{eq: limth 3}
\sum_{|y-x|<C\sqrt{t}} p_i^{(1)}(t, x, y) > 1-\varepsilon.
\end{equation}

Thus, as $t\to\infty$ a particle will move at a distance of no more than $C\sqrt{t}$ with probability arbitrarily close to $1$.

Turn to the case when $\mathbb Z^d$, $d=2$. Then, from Equation~\eqref{p_i^d(t,0,s)} takes the form
\[
p_i^{(2)}(t, 0, s) = \frac{e^{-(B_i^{-1}s, s)/(2 t)}}{2\pi t\sqrt{\det B_i}} + o(1/t),
\]

As in the previous case, consider the probability that a particle located at point $0\in\mathbb Z^d$ will jump no further than a distance $C\sqrt{t}$.
Consequently,
\begin{equation}
\label{eq: ineq}
\begin{split}
\sum_{|s|<C\sqrt{t}} p_i^{(2)}(t, 0, s) &= p_i^{(2)}(t, 0, 0) + 4\sum_{s_1 \in \mathbb{N}, s_2 \in \mathbb{Z}_+, \sqrt{s_1^2+s_2^2}<C\sqrt{t} } p_i^{(2)}(t, 0, s) \\
& > p_i^{(2)}(t, 0, 0) + 4 \int_{\tau_1 \geqslant1, \tau_2 \geqslant0, \sqrt{\tau_1^2+\tau_2^2}<C\sqrt{t}} \frac{e^{-(B_i^{-1}\tau, \tau)/(2 t)}}{2\pi t\sqrt{\det B_i}}\,d\tau \\
& > p_i^{(2)}(t, 0, 0) + 4 \int_{\tau_1 \geqslant0, \tau_2 \geqslant0, \sqrt{\tau_1^2+\tau_2^2}<C\sqrt{t}} \frac{e^{-(B_i^{-1}\tau, \tau)/(2 t)}}{2\pi t\sqrt{\det B_i}}\,d\tau \\
& - 4 \int_{0 \leqslant\tau_1 \leqslant1, 0 \leqslant\tau_2 \leqslant C\sqrt{t}} \frac{e^{-(B_i^{-1}\tau, \tau)/(2 t)}}{2\pi t\sqrt{\det B_i}}\,d\tau \\
& = \int_{\sqrt{\tau_1^2+\tau_2^2}<C\sqrt{t}} \frac{e^{-(B_i^{-1}\tau, \tau)/(2 t)}}{2\pi t\sqrt{\det B_i}}\,d\tau + o(1).
\end{split}
\end{equation}

Note that in the last equality, under the sign of integral is the function $f(\tau_1,\tau_2)$ such~that
\[
f(\tau_1, \tau_2) = \frac{e^{-(B_i^{-1}\tau, \tau)/(2 t)}}{2\pi t\sqrt{\det B_i}}
\]

Function $f(\tau_1,\tau_2)$ is the density function of a random variable which has \linebreak two-dimensional normal distribution with mean vector $(0,0)$ and covariance matrix $B_i t$.

Therefore, choosing an appropriate constant $C$ we can get
\[
\int_{\sqrt{\tau_1^2+\tau_2^2}<C\sqrt{t}} \frac{e^{-(B_i^{-1}\tau, \tau)/(2 t)}}{2\pi t\sqrt{\det B_i}}\,d\tau
\]
arbitrarily close to 1. Hence, due to Equation~\eqref{eq: ineq} for every $\varepsilon > 0$, there exists $C >0$, such~that
\[
\sum_{|y-x|<C\sqrt{t}} p_i^{(2)}(t, x, y) > 1-\varepsilon.
\]

Thus, as $t \to \infty$, a particle will move with probability arbitrarily close to $1$ over a distance no greater than $C\sqrt{t}$. This result for the lattice dimension $d=2$ is similar to that obtained in Equation~\eqref{eq: limth 3} for the lattice dimension $d=1$.

Let us now consider the situation where there is one  particle of type $i$ at each point at the initial time. We denote the set of odd positive integers by $\mathbb{N}_1$ and the set of even positive integers by $\mathbb{N}_2$. Let us consider a given particle at time $t$ and all its progenitors up to the initial time.

We consider a particle at time $t$ and a sequence (evolutionary lineage ) $K$ consisting of all its $m$ progenitors (from the initial particle to the immediate parent) and the particle itself, $K = (k_1, \dots, k_m, k_{m+1})$, $m \ge 0$. If $m > 0 $ (i.e., the particle is not included in the set of initial particles on the lattice), then we select $s$ from the sequence of indices $[2, \dots, m+1]$ such that $type(k_s) \ne type(k_{s-1})$, where $ type(k)$ denotes the type of the particle $k$. We denote the sequence of selected indices by $S = (s_1, \dots, s_n)$, $n \le m+1$. If the sequence $S$ turns out to be empty (i.e., no type changes were observed in the evolutionary lineage considered), then we add to it the index $s_1:=m+1$, then $n=1$. We denote by $h(k)$ the lifetime of the particle $k$ and construct the sequence $\tau = (\tau_1, \dots, \tau_n)$, where $\tau_1 = \sum\limits_{i=1}^{S[1]} h(k_i)$ and $\tau_j = \sum\limits_{i=1}^{S[j]} h(k_i) - \tau_{j-1}$, $j=2, \dots, n$. Note that $\sum_{i=1}^n \tau_i = t$. Assuming that the evolutionary lineage started with a particle of type $1$, we obtain that in the time intervals $(0, \tau_1)$, $(\tau_2, \tau_3)$, $\ldots$ the particles of this evolution lineage walk on the lattice under the action $\mathcal{L}_1$ and on the time intervals $(\tau_1, \tau_2)$, $(\tau_3, \tau_4)$, $\dots$ under the action of $\mathcal{L}_2$.

We denote by $p(\tau, x, y)$ the probability for a particle to move from a point $x$ to $y$ on $\mathbb{Z}^d$ in time $\tau$. Due to the Kolmogorov--Chapman equation, for $n\ge2$ we obtain
\begin{equation}\label{p(tau, x, y)}
p(\tau, x, y) = \sum_{\substack{x_i \in \mathbb{Z}^d \\ 1 \leqslant i \leqslant n-1}} \bigg(p_1(\tau_1, x, x_1) \prod_{i=2}^{n-1} p_{s(i)}(\tau_{i}, x_{i-1}, x_{i}) p_{s(n)}(\tau_n, x_{n-1}, y) \bigg),
\end{equation}
where $s(i) = 1$ for $i \in \mathbb{N}_1$ and $s(i) = 2$ for $i \in \mathbb{N}_2$. This representation will be needed in the following lemma.

\begin{lemma}\label{L51}
Let $t_1 := \sum_{i \in \mathbb{N}_1} \tau_i$ be the total time spent by a particle in the first state and $t_2 := \sum_{i \in \mathbb{N}_2} \tau_i$ be the total time spent by the same particle in the second state. Then,
\begin{equation}
\label{lemma_5.1}
p(\tau, x, y) = \sum_{x' \in \mathbb{Z}^d} p_1(t_1, x, x') p_2(t_2, x', y).
\end{equation}
\end{lemma}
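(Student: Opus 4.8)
The plan is to exploit the spatial homogeneity of both underlying random walks. For kernels $f,g$ on $\mathbb Z^d\times\mathbb Z^d$ write $(f\star g)(x,y):=\sum_{z\in\mathbb Z^d}f(x,z)g(z,y)$, so that~\eqref{p(tau, x, y)} reads, for $n\ge2$, $p(\tau,x,y)=\bigl(p_{s(1)}(\tau_1,\cdot,\cdot)\star\cdots\star p_{s(n)}(\tau_n,\cdot,\cdot)\bigr)(x,y)$ with $s(i)=1$ for $i\in\mathbb N_1$ and $s(i)=2$ for $i\in\mathbb N_2$ (the case $n=1$ being trivial, since $p_2(0,\cdot,\cdot)$ is the unit for $\star$). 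By Remark~\ref{remark_p(t,x,y)}, $p_i(t,x,y)=p_i(t,0,y-x)=:q_i^t(y-x)$, and the change of variable $w=z-x$ turns the composition of two such kernels into a convolution on the abelian group $\mathbb Z^d$: $\bigl(p_i(s,\cdot,\cdot)\star p_j(t,\cdot,\cdot)\bigr)(x,y)=\sum_{w}q_i^s(w)\,q_j^t(y-x-w)=(q_i^s*q_j^t)(y-x)$. Since each $\mathcal L_i$ has bounded (constant) total jump rate $\varkappa_i$, the $p_i(t,\cdot,\cdot)$ are honest transition probabilities, i.e.\ $q_i^t\ge0$ and $\sum_u q_i^t(u)=1$; hence all the multiple sums below converge absolutely, Tonelli's theorem licenses every rearrangement, $\star$ is associative, and, for kernels of the form $p_i(t,\cdot,\cdot)$, $\star$ is commutative (because convolution on $\mathbb Z^d$ is).

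Next I would use the Kolmogorov--Chapman identity $p_i(s,\cdot,\cdot)\star p_i(t,\cdot,\cdot)=p_i(s+t,\cdot,\cdot)$, which holds since $p_i(t,\cdot,\cdot)$ is the semigroup $e^{t\mathcal L_i}$ solving $\partial_t p_i=\mathcal L_i p_i$, $p_i(0,\cdot,\cdot)=\delta$ (see~\cite{Gikhman}). By commutativity and associativity of $\star$, regroup the factors in~\eqref{p(tau, x, y)} by type, collecting the $n_1:=|\{i\le n:i\in\mathbb N_1\}|$ factors with generator $\mathcal L_1$ first and the remaining $n_2$ ones afterwards:
\[
p(\tau,x,y)=\Bigl(\bigl(p_1(\tau_{i_1},\cdot,\cdot)\star\cdots\star p_1(\tau_{i_{n_1}},\cdot,\cdot)\bigr)\star\bigl(p_2(\tau_{j_1},\cdot,\cdot)\star\cdots\star p_2(\tau_{j_{n_2}},\cdot,\cdot)\bigr)\Bigr)(x,y),
\]
where $\{i_1,\dots,i_{n_1}\}$ enumerates the odd indices $\le n$ and $\{j_1,\dots,j_{n_2}\}$ the even ones. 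Applying the semigroup identity repeatedly inside each bracket collapses the first into $p_1\bigl(\sum_{i\in\mathbb N_1}\tau_i,\cdot,\cdot\bigr)=p_1(t_1,\cdot,\cdot)$ and the second into $p_2(t_2,\cdot,\cdot)$, giving $p(\tau,x,y)=\bigl(p_1(t_1,\cdot,\cdot)\star p_2(t_2,\cdot,\cdot)\bigr)(x,y)=\sum_{x'\in\mathbb Z^d}p_1(t_1,x,x')\,p_2(t_2,x',y)$, which is~\eqref{lemma_5.1}.

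The argument is not deep; the only point needing care is the legitimacy of rearranging and regrouping the multi-index sum in~\eqref{p(tau, x, y)}, which is precisely where one uses both the nonnegativity of the $p_i$ (Tonelli) and, crucially, their spatial homogeneity — for an inhomogeneous walk the kernels $p_1$ and $p_2$ would not commute and the factorization~\eqref{lemma_5.1} would fail. Carrying the dummy indices cleanly through the regrouping is the only mildly tedious bookkeeping.
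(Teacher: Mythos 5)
Your proof is correct and is essentially the paper's argument: both rest on the commutativity of spatially homogeneous kernels (convolution on the abelian group $\mathbb Z^d$) together with the Chapman--Kolmogorov semigroup identity to merge kernels of the same type. The paper implements the reordering as an iterated swap-and-merge identity $p(\tau,x,y)=p((\tau_1,\dots,\tau_{n-2}+\tau_n,\tau_{n-1}),x,y)$ applied $n-2$ times, whereas you regroup all factors by type in a single step; the difference is purely organizational, and your version has the merit of making explicit the commutativity that the paper uses silently in its index substitution.
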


\begin{proof}
Let us show first that
\begin{equation}\label{E:ptxy}
p(\tau, x, y) = p((\tau_1, \dots, \tau_{n-2}+\tau_n, \tau_{n-1}), x, y).
\end{equation}

For the proof, due to Equation~\eqref{p(tau, x, y)}, it is enough to consider the following sequence of~relations:
\begin{align*}
p(\tau,& x, y) = \sum_{\substack{x_i \in \mathbb{Z}^d \\ 1 \leqslant i \leqslant n-1}} \bigg(p_1(\tau_1, x, x_1) \prod_{i=2}^{n-3} p_{s(i)}(\tau_{i}, x_{i-1}, x_{i})
p_{s(n-2)}(\tau_{n-2}, x_{n-3}, x_{n-2}) \\
&\quad\times p_{s(n-1)}(\tau_{n-1}, x_{n-2}, x_{n-1})
p_{s(n)}(\tau_n, x_{n-1}, y) \bigg) = \sum_{\substack{x_i \in \mathbb{Z}^d \\ 1 \leqslant i \leqslant n-2}} \sum_{x' \in \mathbb{Z}^d} p_1(\tau_1, x, x_1)\\
&\quad\times \prod_{i=2}^{n-3} p_{s(i)}(\tau_{i}, x_{i-1}, x_{i}) p_{s(n-2)}(\tau_{n-2}, x_{n-3}, x_{n-2}) p_{s(n-1)}(\tau_{n-1}, x', y) p_{s(n)}(\tau_n, x_{n-2}, x') \\
&= \sum_{\substack{x_i \in \mathbb{Z}^d \\ 1 \leqslant i \leqslant n-3}} \sum_{x' \in \mathbb{Z}^d} p_1(\tau_1, x, x_1) \prod_{i=2}^{n-3} p_{s(i)}(\tau_{i}, x_{i-1}, x_{i}) p_{s(n-1)}(\tau_{n-1}, x', y) \\
&\quad\times \sum_{x_{n-2}} p_{s(n-2)}(\tau_{n-2}, x_{n-3}, x_{n-2}) p_{s(n)}(\tau_n, x_{n-2}, x') = \sum_{\substack{x_i \in \mathbb{Z}^d \\ 1 \leqslant i \leqslant n-3}} \sum_{x' \in \mathbb{Z}^d} p_1(\tau_1, x, x_1) \\
&\quad\times \prod_{i=2}^{n-3} p_{s(i)}(\tau_{i}, x_{i-1}, x_{i}) p_{s(n-2)}(\tau_{n-2}+\tau_n, x_{n-3}, x') p_{s(n-1)}(\tau_{n-1}, x', y) \\
&= \sum_{\substack{x_i \in \mathbb{Z}^d \\ 1 \leqslant i \leqslant n-2}} p_1(\tau_1, x, x_1) \prod_{i=2}^{n-3} p_{s(i)}(\tau_{i}, x_{i-1}, x_{i}) p_{s(n-2)}(\tau_{n-2}+\tau_n, x_{n-3}, x_{n-2}) \\
&\quad\times p_{s(n-1)}(\tau_{n-1}, x_{n-2}, y) = p((\tau_1, \dots, \tau_{n-2}+\tau_n, \tau_{n-1}), x, y).
\end{align*}
Consecutively, applying the Formula \eqref{E:ptxy} $n-2$ times, we obtain
\begin{align*}
p(\tau, x, y) &= p((\tau_1,\ldots,\tau_{n-2}+\tau_n, \tau_{n-1}), x, y) \\
&= p((\tau_1, \dots, \tau_{n-3}+\tau_{n-1}, \tau_{n-2}+\tau_n), x, y) = \ldots \\
&= p((\tau_1+\tau_3 +\ldots + \tau_{2[(n-1)/2]+1}, \tau_2+\tau_4 +\ldots + \tau_{2[n/2]}), x, y),
\end{align*}
whence the assertion of the lemma follows.
\end{proof}

Now, we will apply Equation~\eqref{lemma_5.1} from Lemma~\ref{L51} for understanding how far the particles can go from the initial position of their initial progenitor by some time $t$ when $t \to \infty$.

In case of $t \to \infty$ and $t_i \to \infty$, $i=1, 2$, for every $\varepsilon > 0$, there exists $C_i > 0$ such that
\[
\sum_{|y-x|<C_i\sqrt{t}} p_i(t_i, x, y) \geqslant\sum_{|y-x|<C_i\sqrt{t_i}} p_i(t_i, x, y) > 1-\varepsilon.
\]

In case of $t \to \infty$ and $t_i < C$, $i=1, 2$, and for every $\varepsilon >0$, we have:
\[
\sum_{|y-x|<\sqrt{t}} p_i(t_i, x, y) > 1-\varepsilon.
\]

Thus, for $t \to \infty$, $\forall \varepsilon >0$, $\forall \tau: \sum_i \tau_i = t$,
\[
\sum_{|y-x|<(C_1+C_2+1)\sqrt{t}} p(\tau, x, y) = \sum_{|y-x|<(C_1+C_2+1)\sqrt{t}} \sum_{x' \in \mathbb{Z}^d} p_1(t_1, x, x') p_2(t_2, x', y) > (1-\varepsilon)^2.
\]

For $d = 1$, the distance between the start points of subpopulations that did not degenerate by the time $t \to \infty$ has a geometric distribution with an average value of $\frac{t}{c_i} + o(t)$ Equation~\eqref{not deg} and non-degenerate subpopulations have particles at a distance from the initial particle of the order of no more than $\sqrt{t}$ with a probability arbitrarily close to 1, see Equation~\eqref{eq: limth 3}.

Thus, particle clusters with length of order $\sqrt{t}$ are separated by empty intervals with length of order $t$.

Let us turn to the case $d = 2$. Choose two functions, $\nu(t)$ and $f(t)$, such that $\nu(t) \to \infty$ and  $\nu(t)/t \to 0$ for $t \to \infty$, and $f(t)= O\left(\frac{1}{t\nu(t)}\right)$. Now, consider the square of the lattice with side $\sqrt \frac{t\nu(t)e^{\nu(t)}}{c_i}$ and divide it into cells with side $\sqrt{\frac{t\nu(t)}{c_i}}$; then the number of cells will be $e^{\nu(t)}$. We call a cell degenerate at time $t$ if it does not contain the starting points of populations that do not degenerate at time $t$. Then, the probability that for $t \to \infty$ all subpopulations of cell degenerate is
\begin{align*}
\Prob_{deg}(t) = \bigg(1-\frac{c_i}{t} + f(t)\bigg)^{\frac{t\nu(t)}{c_i}} = e^{-\nu(t)+O(1)} \ge \frac{C}{e^{\nu(t)}}
\end{align*}
with some constant $C>0$. The probability of the existence of a cell, whose all subpopulations of the initial particles are degenerated, is
\begin{align*}
1-\bigg(1- \Prob_{deg}(t) \bigg)^{e^{\nu(t)}} \ge 1-\frac{1}{e^C}.
\end{align*}

Non-degenerate subpopulations have particles at a distance from the initial particle of the order of no more than $\sqrt{t} \ll \sqrt{\frac{ t\nu(t)}{c_i}}$.

Therefore, by the time $t \to \infty$, we get particle-free circles with a radius of the order of $\sqrt{\frac{ t\nu(t)}{c_i}}$ at a distance of the order of $\sqrt \frac{t\nu(t)e^{\nu(t)}}{c_i}$.

Thus, we have proved that both in the case of dimension $d = 1$ and $d = 2$, the effect of clustering of particle subpopulations takes place.

\section{Example}
\label{important_example}
One of the assumptions of the model we presented in Section~\ref{description_of_the_model} was the fact that particles cannot change their type over time (see Remark~\ref{remark_mu_beta}). Here, we will consider an example where particles of the first type can become particles of the second type. This example can describe the distribution of a virus.

In Section~\ref{example_description_of_the_model}, we will describe a new model of BRW with two types of particles, where the particles can change their types. We will use the designations from Section~\ref{description_of_the_model}. In Section~\ref{example_the_first_moments}, we study the first moments for the number of particles of type $i=1,2$ at each lattice point. In Section~\ref{example_second_moment_N_1(t,x)}, we obtain the solutions for the second moment for the number of particles of the first type at each lattice point in the more general case. In Section~\ref{example_intermittency_N_1(t,x)}, we study the effect of intermittency in the simplest case for the number of particles of the first type. In Section~\ref{example_second_moment_N_2(t,x)}, we obtain the differential equation for the second moment for the number of particles of the second type at each lattice point and find its asymptotic behavior as $t\to\infty$.

\subsection{Description of the Model}
\label{example_description_of_the_model}

Consider a new model of BRW with two types of particles. Here, we will study the behavior of the processes $N_i(t,x)$, $i=1,2$ defined in~\eqref{total_population}. We call the particles of the first type \emph{infected} and the particles of the second type \emph{particles with immunity}. Let us denote by $r$ the intensity to build up immunity for an infected particle during the small time $dt$. This means that the particle can change type with probability $r\,dt + o(dt)$. Moreover, we assume that there was only one infected particle on the lattice at time $t=0$. Without limiting generality, we can assume that this initial particle was at the origin. Then, $N_1 (0,x) = \delta_0 (x)$, $N_2 (0,x) \equiv 0$ for all $x\in\mathbb Z^d$. Let $b_n$, $n\geqslant2$ be the intensity to infect $n-1$ new particles. Here we assume that there are enough healthy particles at each point of the lattice to get sick. We are also interested in studying the moments of the number of particles of both types. In the previous notation, we say that $b_n = \beta_1 (n,0)$, $\beta_2 (k,l) \equiv 0$ for all $k,l: k+l\geqslant2$.

In what follows, due to the fact that particles can change their types, we use the forward Kolmogorov equations approach to obtain the differential equations for the moments of $N_i (t,x)$, $i=1,2$. The derivation of the forward Kolmogorov equations is based on the following representation:
\begin{align*}
N_1(t+dt,x) &= N_1(t,x) + \xi (dt,x),\\
N_2(t+dt,x) &= N_2(t,x) + \psi (dt,x),
\end{align*}
where $\xi(dt,x)$ and $\psi(dt,x)$ are discrete random variables with the following distributions:
\begin{equation*}
\xi (dt,x) = \begin{cases}
n-1 &\text{with probability}~b_{n} N_1(t,x)\,dt + o(dt),~ n \geqslant 3,\\
1 &\text{with probability}~b_{2} N_1(t,x)\,dt + \varkappa_1 \sum_{z \ne 0} a_1 (-z)N_1(t,x+z)\,dt + o(dt),\\
-1 &\text{with probability}~\mu_1 N_1(t,x)\,dt + \varkappa_1 N_1(t,x)\,dt + rN_1 (t,x)\,dt + o(dt),\\
0 &\text{with probability}~1 - \sum_{n \geqslant 3} b_{n} N_1(t,x)\,dt\\
&- (\beta_{2} + \mu_1 + \varkappa_1)N_1(t,x)\,dt - rN_1 (t,x)\,dt\\
& -\sum_{z \ne 0} a_1 (-z)N_1(t,x+z)\,dt + o(dt);
\end{cases}
\end{equation*}
\begin{equation*}
\psi (dt,x) = \begin{cases}
1 &\text{with probability}~\varkappa_2 \sum_{z \ne 0} a_2 (-z)N_2(t,x+z)\,dt + rN_1(t,x)\,dt + o(dt),\\
-1 &\text{with probability}~\mu_2 N_2(t,x)\,dt + \varkappa_2 N_2(t,x)\,dt + o(dt),\\
0 &\text{with probability}~1 - (\mu_2 + \varkappa_2 N_2(t,x)\,dt - rN_1(t,x)\,dt\\
& -\sum_{z \ne 0} a_2 (-z)N_2(t,x+z)\,dt + o(dt).
\end{cases}
\end{equation*}

We are going to study the first two moments for the random variables $N_i (t,x)$, $i=1,2$. In the next section, we pay attention to the first moments.

\subsection{The First Moments}
\label{example_the_first_moments}

In this section, we consider the first moments for $N_i (t,x)$, $i=1,2$. We obtain the differential equations for them and find their explicit solutions in terms of the Fourier transform~\eqref{fourier_transform}. We also obtain their asymptotic behavior in certain cases. Define the first moments $R_i (t,x) := \Expect N_i(t,x)$. Note that $R_i (0,x) = \delta_1 (i) \delta_0 (x)$. Let $\mathcal F_{\leqslant t}$ be the sigma-algebra of events up to and including $t$. Note that $\xi (dt,x)$ (and $\psi (dt,y)$) and $\mathcal F_{\leqslant t}$ are independent.

Derive the differential equations for these functions:
\begin{align*}
R_1 (t+dt,x) &= \Expect N_1(t+dt,x) = \Expect[N_1(t,x) + \xi(dt,x)] = R_1 (t,x) \\&\quad+ \Expect[\Expect[\xi(dt,x)| \mathcal F_{\leqslant t}]] = R_1 (t,x) + \sum_{n=2}^{\infty} (n-1)b_n R_1 (t,x)\,dt \\&\quad- \mu_1 R_1 (t,x)\, dt + (\mathcal L_1 R_1 (t,\cdot))(x)\,dt - r M_1 (t,x)\,dt + o(dt).
\end{align*}

Let $\beta = \sum_{n=2}^{\infty} (n-1) b_n$. Then, as $dt\to0$ the differential equation for $R_1 (t,x)$ is
\begin{equation}\label{example_first_eq}
\frac{\partial R_1 (t,x)}{\partial t} = (\beta - \mu_1 - r) R_1 (t,x) + (\mathcal L_1 R_1 (t,\cdot))(x),\qquad
R_1 (0,x) = \delta_0 (x).
\end{equation}

The same technique helps to find the differential equation for $R_2 (t,x)$:
\begin{align*}
R_2 (t+dt,x) &= \Expect N_2(t+dt,x) = \Expect[N_2(t,x) + \psi(dt,x)] \\&= R_2 (t,x) + \Expect[\Expect[\psi(dt,x)| \mathcal F_{\leqslant t}]] = R_2 (t,x) \\&\quad+ (\mathcal L_2 R_2 (t,\cdot))(x\,)\,dt - \mu_2 R_2 (t,x)\,dt + rR_1 (t,x)\,dt + o(dt).
\end{align*}

From this we get as $dt\to0$:
\begin{equation}\label{example_second_eq}
\frac{\partial R_2 (t,x)}{\partial t} = (\mathcal L_2 R_2 (t,\cdot))(x) - \mu_2 R_2 (t,x) + rR_1 (t,x),\qquad
R_2 (0,x) = 0.
\end{equation}

Firstly, solve equation Equation~\eqref{example_first_eq}. Write again the equation for this function:
\[
\frac{\partial R_1 (t,x)}{\partial t} = (\beta - \mu_1-r) R_1 (t,x) + (\mathcal L_1 M_1 (t,\cdot))(x),\qquad
R_1 (0,x) = \delta_0 (x).
\]

To solve the equation apply the discrete Fourier transform Equation~\eqref{fourier_transform}. Then,
\[
\frac{\partial \widehat{R}_1 (t,\theta)}{\partial t} = (\beta - \mu_1 - r) \widehat{R}_1 (t,\theta) + \varkappa_1 \widehat{a}_1 (\theta) \widehat{R}_1 (t,\theta),\qquad
\widehat{R}_1 (0,x) = 1.
\]

The solution has the form:
\[
\widehat{R}_1 (t,\theta) = e^{(\beta-\mu_1 - r)t} e^{\varkappa_1 \widehat{a}_1 (\theta)t}
\]

\begin{remark}
For convenience, we denote the inverse Fourier transform~\eqref{inv_fourier_transform} of a function $f(\theta)$ by $\widetilde{\widehat{f}(\theta)}$.
\end{remark}

Therefore,
\begin{equation}\label{example_first_solution}
R_1 (t,x) = e^{(\beta-\mu_1 - r)t} \widetilde{e^{\varkappa_1 \widehat{a}_1 (\theta)t}}.
\end{equation}

We find out the solution of Equation~\eqref{example_second_eq}.
Substituting the solution for $R_1 (t,x)$ from Equation~\eqref{example_first_solution} in Equation~\eqref{example_second_eq}, we obtain:
\begin{align}\label{R_2(t,x)_diff_eq}
\frac{\partial R_2 (t,x)}{\partial t} = (\mathcal L_2 R_2 (t,\cdot))(x) - \mu_2 R_2 (t,x) + re^{(\beta-\mu_1 - r)t} \widetilde{e^{\varkappa_1 \widehat{a}_1 (\theta)t}}, \qquad R_2 (0,x) = 0.
\end{align}

To obtain the solution of differential Equation~\eqref{R_2(t,x)_diff_eq}, we consider two cases:
\[
\beta - \mu_1 - r = -\mu_2,\qquad \beta - \mu_1 - r \ne -\mu_2.
\]
\paragraph{Case $\beta - \mu_1 - r = -\mu_2$.}
Apply the discrete Fourier transform~\eqref{fourier_transform} and the variation of constants methods to solve the Equation~\eqref{R_2(t,x)_diff_eq}.

If $\varkappa_1 \widehat{a}_1 (\theta) = \varkappa_2 \widehat{a}_2 (\theta)$, then
\[
\widehat{R}_2 (t,\theta) = rte^{(\varkappa_2 \widehat{a}_2 (\theta) - \mu_2)t}.
\]

Consequently,
\[
R_2 (t,x) = rte^{-\mu_2 t} \widetilde{e^{\varkappa_2 \widehat{a}_2 (\theta)t}}.
\]

If $\varkappa_1 \widehat{a}_1 (\theta) - \varkappa_2 \widehat{a}_2 (\theta) =: d >0$, then
\[
\widehat{R}_2 (t,\theta) = \frac{r}{d} \Bigl(e^{dt} - 1\Bigr) e^{-\mu_2 t} e^{\varkappa_2 \widehat{a}_2 (\theta)t}.
\]

Then,
\[
R_2 (t,x) = re^{(\beta-\mu_1 - r)t}\widetilde{\Bigl(\frac{e^{\varkappa_1 \widehat{a}_1 (\theta)t}}{d}\Bigr)} - re^{-\mu_2 t}\widetilde{\Bigl(\frac{e^{\varkappa_2 \widehat{a}_2 (\theta)t}}{d}\Bigr)}.
\]

\paragraph{Case $\beta - \mu_1 - r \ne -\mu_2$.}
Here, the solution of differential Equation~\eqref{R_2(t,x)_diff_eq} has the form:

If $\varkappa_1 \widehat{a}_1 (\theta) + \beta - \mu_1 - r = \varkappa_2 \widehat{a}_2 (\theta) - \mu_2$, then
\[
\widehat{R}_2 (t,\theta) = rte^{-\mu_2 t} e^{\varkappa_2 \widehat{a}_2 (\theta)t};
\]

If $\varkappa_1 \widehat{a}_1 (\theta) + \beta - \mu_1 - r - \varkappa_2 \widehat{a}_2 (\theta) + \mu_2 =: d >0$, then
\[
\widehat{R}_2 (t,\theta) = \frac{r}{d} \Bigl(e^{dt} - 1\Bigr) e^{-\mu_2 t} e^{\varkappa_2 \widehat{a}_2 (\theta)t}.
\]

\begin{remark}\label{example_first_moments_f_v_j_asymptotics}
Find out the asymptotic behavior for the first moments $R_i (t,x)$, $i=1,2$ in a particular case. Assume that generators $\mathcal L_i$, $i=1,2$ defined in Equation~\eqref{generator} are equal, so that $\mathcal L_1 = \mathcal L_2$. Additionally, consider the case when underlying random walks have a finite variance of jumps, so that Equation~\eqref{f_v_j} is true. Then, from Equation~\eqref{asymptotic_f_v_j} we have, for each $x\in\mathbb{Z}^{d}$,
\begin{equation}\label{asymp_trans_prob}
\int_{[-\pi,\pi]^d} e^{\varkappa\widehat{a(\theta)}t} \cos((\theta,x))\,d\theta \sim \frac{\gamma_d}{t^{d/2}},
\end{equation}
where $\gamma_d$ is specified by Equation~\eqref{gamma_d}. With the usage of~\eqref{asymp_trans_prob} we get for $\widehat R_i (t,\theta)$, $i=1,2$ obtained above we have, for each $x\in\mathbb{Z}^{d}$, as $t\to\infty$:
\begin{align*}
&R_1 (t,x) \sim \frac{\gamma_d}{t^{d/2}},\\
&R_2 (t,x) \sim \frac{r\gamma_d}{t^{(d-2)/2}},
\end{align*}
when $\beta - \mu_1 - r = 0$ and $\mu_2 = 0$, and
\begin{align*}
&R_1 (t,x) \sim e^{At} \frac{\gamma_d}{t^{d/2}},\\
&R_2 (t,x) \sim \frac{r}{A} \Bigl( e^{At} - 1 \Bigr) \frac{\gamma_d}{t^{d/2}},
\end{align*}
when $A = \beta - \mu_1 - r\ne0$ and $\mu_2 = 0$
\end{remark}

So, we have found the first moments for both types of a particle. In the next sections, we are going to get the explicit form of the second moments.

\subsection{The Second Moment for $N_1 (t,x)$}
\label{example_second_moment_N_1(t,x)}

Here, we will find out the asymptotic behavior for the second moments of the random variable $N_1 (t,x)$.

To derive the second moment for $N_1 (t,x)$, we consider a more general problem. Let $N_1 (t,x,y)$ be the number of particles of the first type at time $t$ at point $y\in\mathbb Z^d$ generated by the single particle of the first type located at time $t=0$ at the site $x\in\mathbb Z^d$. The initial condition for $N_1 (t,x,y)$ is $N_1 (0,x,y) = \delta_x (y)$. If we use the designations from Section~\ref{example_description_of_the_model}, if $N_1 (t,x) = N_1 (t,0,x)$. Here, we will use the method of backward Kolmogorov equations. Define the generating function for the random variable $N_1 (t,x,y)$ as
\begin{equation}\label{gen_func}
F(t,x,y;z) = \Expect e^{-zN_1 (t,x,y)},
\end{equation}
where $z\in\mathbb R$, $t\geqslant0$. From this, we get the following lemma.

\begin{lemma}
The generating function $F(t,x,y;z)$ specified by Equation~\eqref{gen_func} satisfies the differential equation:
\begin{equation}\label{gen_func_diff_eq}
\begin{aligned}
\frac{\partial F(t,x,y;z)}{\partial t} &= (\mathcal L_{1,x} F(t,\cdot,y;z))(x) + f\bigl(F(t,x,y;z)\bigr) + r(1-F(t,x,y;z));\\
F(0,x,y;z) &= e^{-z\delta_x (y)}.
\end{aligned}
\end{equation}
\end{lemma}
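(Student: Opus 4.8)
The plan is to derive the evolution equation for the generating function $F(t,x,y;z) = \Expect e^{-zN_1(t,x,y)}$ by the backward Kolmogorov method, conditioning on what happens to the single initial particle of the first type sitting at $x$ during the small time interval $[0,dt]$. Since we condition on the first infinitesimal event, the subpopulations that descend from the configuration present at time $dt$ evolve over the remaining time $t$ and are mutually independent, which is exactly what makes $\Expect e^{-zN_1}$ factor into products. I would list the mutually exclusive possibilities for the initial particle during $dt$: (i) it jumps from $x$ to $x+v$ with probability $\varkappa_1 a_1(v)\,dt + o(dt)$ and thereafter behaves like a particle started at $x+v$; (ii) it infects $n-1$ new first-type particles (so $n$ first-type particles at $x$ total) with probability $b_n\,dt+o(dt)$, $n\ge 2$, contributing a factor $F(t,x,y;z)^n$; (iii) it dies with probability $\mu_1\,dt + o(dt)$, contributing a factor $1$ (since it generates an empty subpopulation and $e^{-z\cdot 0}=1$); (iv) it changes type to the second type with probability $r\,dt+o(dt)$, after which it generates no first-type particles at all, contributing a factor $1$; (v) nothing happens, with the complementary probability, contributing the factor $F(t,x,y;z)$.

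Assembling these contributions gives
\[
F(t+dt,x,y;z) = \Bigl(1 - \varkappa_1\,dt - \sum_{n\ge2} b_n\,dt - \mu_1\,dt - r\,dt\Bigr)F(t,x,y;z) + \varkappa_1\sum_v F(t,x+v,y;z)a_1(v)\,dt + \sum_{n\ge2} b_n F^n(t,x,y;z)\,dt + \mu_1\,dt + r\,dt + o(dt).
\]
Subtracting $F(t,x,y;z)$, dividing by $dt$, and letting $dt\to 0$, one collects the migration term $(\mathcal L_{1,x}F(t,\cdot,y;z))(x) = \varkappa_1\sum_v(F(t,x+v,y;z)-F(t,x,y;z))a_1(v)$, the branching term $\sum_{n\ge2} b_n\bigl(F^n(t,x,y;z)-F(t,x,y;z)\bigr)$, the death term $\mu_1(1-F(t,x,y;z))$, and the type-change term $r(1-F(t,x,y;z))$. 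Here I would note that with $b_n=\beta_1(n,0)$, the quantity $f(\,\cdot\,)$ denoting the branching part is $f(s) = \mu_1(1-s) + \sum_{n\ge2} b_n(s^n - s) = \mu_1 + F_1(s,0) - (\mu_1 + \sum_{n\ge2}b_n)s$, matching the one-type branching nonlinearity from Lemma~\ref{lemma_gen_function} specialized to $F_2\equiv 0$; this identification is a bookkeeping step. The initial condition $F(0,x,y;z) = \Expect e^{-zN_1(0,x,y)} = e^{-z\delta_x(y)}$ follows directly from $N_1(0,x,y) = \delta_x(y)$.

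The only genuine subtlety — and the step I would be most careful about — is justifying the factorization of the expectation of $e^{-zN_1}$ over the finitely many offspring in case (ii) together with the independence of distinct subpopulations, i.e., that conditionally on the branching event producing $n$ particles at $x$, the total count $N_1(t,x,y)$ is a sum of $n$ i.i.d. copies of $N_1(t,x,y)$, so its Laplace transform is the $n$-th power. This is the branching property and is standard, but it is worth a sentence. One should also check that the interchange of expectation and the $dt$-expansion is legitimate; this is exactly the kind of point handled by Remark~\ref{rem_carleman} (the Carleman/analyticity condition from \eqref{int_ineq}) and by the Banach-space argument indicated in Remarks~\ref{R:moments1} and~\ref{R:moments2}, which guarantees the relevant moments are finite and the generating function is well-behaved, so I would simply invoke those. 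Collecting the limit then yields \eqref{gen_func_diff_eq}, completing the proof.
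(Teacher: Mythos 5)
Your proposal is correct and follows essentially the same route as the paper: a backward Kolmogorov decomposition over the first infinitesimal time interval, with the branching event contributing $F^n$ via the independence of offspring subpopulations, and the death and type-change events each contributing a factor $1$, yielding exactly the paper's $f(s)=\mu_1(1-s)+\sum_{n\geqslant2}b_n(s^n-s)$ plus the separate $r(1-F)$ term. Your explicit remarks on the factorization step and the identification of $f$ only make more careful what the paper leaves implicit in its conditional-expectation notation.
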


\begin{proof}
Consider the generating function $F(t,x,y;z)$ at the time moment $t+dt$. Then,
\begin{align*}
F(t+dt,x,y;z) &= \Expect e^{-zN_1 (t+dt,x,y)}\\ &= \Expect\Bigl[ \Expect \Bigl[ e^{-zN_1 (dt,x,x)N_1 (t,x,y)} \prod_{u\ne0} e^{-zN_1(dt,x,x+u)N_1 (t,x+u,y)} | \mathcal F_{\leqslant t} \Bigr]\Bigr] \\&= \Expect e^{-zN_1 (t,x,y)} \biggl[ \sum_{n\geqslant2} e^{-z(n-1)N_1 (t,x,y)} b_n\, dt + e^{zN_1 (t,x,y)} (\mu_1 + r)\,dt \\&\quad+ \sum_{u\ne0} e^{-zN_1 (t,x+u,y)} \varkappa_1 a_1 (u)\, dt + 1 - \Bigl(\sum_{n\geqslant2} b_n + r + \mu_1 + \varkappa_1\Bigr)\,dt + o(dt) \biggr] \\&= F(t,x,y;z) + (\mathcal L_{1,x} F(t,\cdot,y;z))(x)\,dt + f\bigl(F(t,x,y;z)\bigr)\,dt \\&\quad+ r\bigl(1-F(t,x,y;z)\bigr)\,dt + o(dt),
\end{align*}
where $f(s) = \mu_1 + s(-\mu_1 - \sum_{n\geqslant2} b_n) + \sum_{n\geqslant2} b_n s^n$ and
\[
(\mathcal L_{i,x} \Psi (t,\cdot,y))(x) = \varkappa_i \sum_{v\ne0} a_i (v)[\Psi (t,x+v,y) - \Psi (t,x,y)],\quad i=1,2.
\]

Then,
\begin{align*}
F(t+dt,x,y;z) &- F(t,x,y;z) = (\mathcal L_{1,x} F(t,\cdot,y;z))(x)\,dt + f\bigl(F(t,x,y;z)\bigr)\,dt \\&\quad+ r\bigl(1-F(t,x,y;z)\bigr)\,dt + o(dt).
\end{align*}

Therefore, as $dt\to0$
\[
\frac{\partial F(t,x,y;z)}{\partial t} = (\mathcal L_{1,x} F(t,\cdot,y;z))(x) + f\bigl(F(t,x,y;z)\bigr) + r(1-F(t,x,y;z)).
\]

The initial condition for the latter equation follows from Equation~\eqref{gen_func}:
\[
F(0,x,y;z) = \Expect e^{-zN_1 (0,x,y)} = \Expect e^{-z\delta_x(y)} = e^{-z\delta_x(y)}.
\]
\end{proof}

Later, we also will use the following notation:
\[
(\mathcal L_{i,y} \Psi (t,\cdot,y))(x) = \varkappa_i \sum_{v\ne0} a_i (v)[\Psi (t,x,y+v) - \Psi (t,x,y)],\quad i=1,2.
\]

Let $M_1 (t,x,y) = \Expect N_1 (t,x,y)$ be the first moment of $N_1 (t,x,y)$.
Note that
\[
\frac{\partial M_1 (t,x,y)}{\partial t} = - \left. \frac{\partial^2 F(t,x,y;z)}{\partial t \partial z} \right|_{z=0}.
\]

Then, from Equation~\eqref{gen_func_diff_eq} we can derive the differential equation for the first moment taking the partial derivative of both sides of~\eqref{gen_func_diff_eq}. Omitting the calculus we obtain
\[
\frac{\partial M_1 (t,x,y)}{\partial t} = (\mathcal L_{1,x} M_1 (t,\cdot,y))(x) + (\beta-\mu_1 -r)M_1 (t,x,y),\qquad
M_1 (0,x,y) = \delta_x (y).
\]

As above, with the usage of the discrete Fourier transform~\eqref{fourier_transform}, we can find the solution for this equation:
\[
\widehat{M}_1 (t,\theta,y) = e^{i (\theta,y)} e^{(\beta-\mu_1 -r)t} e^{\varkappa_1 \widehat{a}_1 (\theta)t}.
\]

Then, using the inverse Fourier transform~\eqref{inv_fourier_transform}, we will obtain:
\begin{equation}
\label{eq: inter 1}
M_1 (t,x,y) = e^{(\beta-\mu_1 - r)t} \frac{1}{(2\pi)^d} \int_{[-\pi,\pi]^d} e^{\varkappa_1 \widehat{a}_1 (\theta)t} e^{i (\theta,y-x)}\,d\theta.
\end{equation}

\begin{remark}
Notice that from the obtained representation for $M_1 (t,x,y)$, we get that the first moment $M_1 (t,x,y)$ is a function that depends on the difference of the considered sites on the lattice, so that
\[
M_1 (t,x,y) = M_1 (t,0,y-x).
\]
\end{remark}

From Equation~\eqref{gen_func_diff_eq} (by taking partial derivative over parameter $z$ twice and substituting $z=0$) we can derive the differential equation for the second moment, which can be defined as $M_2 (t,x,y) = \Expect N_1^2 (t,x,y)$:
\[
\frac{\partial M_2 (t,x,y)}{\partial t} = \left. \frac{\partial^3 F(t,x,y;z)}{\partial t \partial z^2} \right|_{z=0}.
\]

Consequently, omitting the calculus
\begin{equation}
\label{sec_mom_f_t}
\begin{aligned}
\frac{\partial M_2 (t,x,y)}{\partial t} &= (\mathcal L_{1,x} M_2 (t,\cdot,y))(x) + (\beta-\mu_1 - r)M_2 (t,x,y) + \beta^{(2)} M_1^2 (t,x,y),\\
M_2 (0,x,y) &= \delta_x (y),
\end{aligned}
\end{equation}
where $\beta^{(2)} = \sum_{n\geqslant2} n(n-1)b_n$.

Apply the discrete Fourier transform~\eqref{fourier_transform} to this equation we obtain:
\begin{align*}
\frac{\partial \widehat{M}_2 (t,\theta,y)}{\partial t} &= (\varkappa_1 \widehat{a}_1 (\theta) + \beta-\mu_1 - r)\widehat{M}_2 (t,\theta,y) \\&\quad+ \frac{\beta^{(2)}}{(2\pi)^d} \int_{[-\pi,\pi]^d} \widehat{M}_1 (t,\theta-\psi,y) \widehat{M}_1 (t,\psi,y)\,d\psi;\\
\widehat{M}_2 (0,\theta,y) &= e^{i (\theta,y)}.
\end{align*}

\begin{remark}
As in Section~\ref{example_the_first_moments}, we consider the asymptotic behavior of $M_2 (t,x,y)$ in case when random walk for the particles of the first type has finite variance of jumps, so that
\[
\sum_{v} a_1 (v) |v|^2 <\infty.
\]
Consider Equation~\eqref{sec_mom_f_t}. The solution of this equation is the sum of the particular solution of Equation~\eqref{sec_mom_f_t} and the solution of homogeneous equation
\begin{equation*}
\frac{\partial M_2 (t,x,y)}{\partial t} = (\mathcal L_{1,x} M_2 (t,\cdot,y))(x) + (\beta-\mu_1 - r)M_2 (t,x,y) + \beta^{(2)} M_1^2 (t,x,y).
\end{equation*}
Let $M_{2,h} (t,x,y)$ be the solution of homogeneous equation and $M_{2,p} (t,x,y)$ be the particular one. Then, $M_2 (t,x,y) = M_{2,h} (t,x,y) + M_{2,p} (t,x,y)$.

Assume that $\beta-\mu_1 -r=0$. Then, previous equation takes the form
\begin{equation}\label{sec_mom_N_1(t,x,y)_remark}
\frac{\partial M_2 (t,x,y)}{\partial t} = (\mathcal L_{1,x} M_2 (t,\cdot,y))(x) + \beta^{(2)} M_1^2 (t,x,y).
\end{equation}

From Remark~\eqref{remark_p(t,x,y)} and Equation~\eqref{asymp_trans_prob}, we get that $M_{2,h} (t,x,y) \sim \frac{\gamma_d}{t^{d/2}}$, $t\to\infty$.
Note that from Remark~\ref{example_first_moments_f_v_j_asymptotics} we have that $M_1 (t,x,y)\sim\gamma_1/\sqrt{t}$ as $t\to\infty$ for $d=1$ and each $x,y\in\mathbb{Z}^{d}$. Let as $t\to\infty$
\[
f(t) = \beta^{(2)}\gamma_1^2\ln t + o(\ln t).
\]
Then, substituting $f(t)$ into Equation~\eqref{sec_mom_N_1(t,x,y)_remark} we have
\[
\frac{\beta^{(2)}\gamma_1}{t} + o(1/t) = \frac{\beta^{(2)}\gamma_1}{t} + o(1/t).
\]
Then, as $t\to\infty$, $f(t)$ is the solution of Equation~\eqref{sec_mom_N_1(t,x,y)_remark} and $M_{2,p} (t,x,y)\sim f(t)$, $t\to\infty$, for each $x,y\in\mathbb{Z}^{d}$.

Similarly, for $d\geqslant2$, we can find that, for each $x,y\in\mathbb{Z}^{d}$,
\[
M_{2,p} (t,x,y) \sim -\gamma_d^2 \frac{\beta^{(2)}}{(d - 1)t^{d-1}}
\]

Consequently, as $M_2 (t,x,y) = M_{2,h} (t,x,y) + M_{2,p} (t,x,y)$ we obtain that, for each ${x,y\in\mathbb{Z}^{d}}$,
\begin{alignat*}{2}
M_2 (t,x,y) &\sim \beta^{(2)}\gamma_1^2 \ln t&\quad\text{for}~d=1,\\
M_2 (t,x,y) &\sim \Bigl(\gamma_2 - \gamma_d^2 \beta^{(2)} \Bigr) t^{-1}&\quad\text{for}~d=2,\\
M_2 (t,x,y) &\sim \gamma_d {t^{-d/2}}&\quad\text{for}~d\geqslant3.
\end{alignat*}
\end{remark}

We have thus obtained the asymptotic behavior of the first two moments of the random variable $N_1 (t,x,y)$. In the next section, we will examine the effect of intermittency (see definition in the next section ) for the random variable $N_1 (t,x,y)$ using $M_i (t,x,y)$, $i=1,2$.

\subsection{Intermittency for $N_1 (t,x)$}
\label{example_intermittency_N_1(t,x)}

In Sections~\ref{example_the_first_moments} and~\ref{example_second_moment_N_1(t,x)}, we obtained the solutions for the first two moments for the random variable $N_1 (t,x,y)$. Here, we will study the effect of intermittency in the simplest case for the number of particles of the first type. Introduce the following definition (see, for example,~\cite{Getan}).
\begin{definition}
The field $\Lambda (t,x)$ is called intermittent when $t\to\infty$ if
\[
\lim_{t\to\infty}\frac{\Expect \Lambda^2 (t,x)}{\bigl(\Expect \Lambda(t,x) \bigr)^2} = \infty,
\]
where $x\in\Omega(t)$, and $\Omega(t)$ is a non-decreasing family of sets.
\end{definition}

\begin{remark}
We are going to consider the effect of intermittency for random variable $N_1 (t,x,y)$. In our designations $N_1 (t,y-x) = N_1 (t,0,y-x)$.
\end{remark}

In what follows, we are going to study the effect of intermittency in one area of $x$ and $y$  when $|y-x|=O(\sqrt{t})$ as $t\to\infty$.

Denote by $p(t,x,y)$ the solution of the following Cauchy problem
\[
\frac{\partial p(t,x,y)}{\partial t} = (\mathcal L_1 p(t,\cdot,y))(x),\qquad p(0,x,y) = \delta_x (y).
\]
Then, the representation from Equation~\eqref{eq: inter 1} has the form
\begin{equation}
\label{eq: inter 3}
M_1 (t, x, y) = p(t, x, y)e^{(\beta-\mu_1-r)t}.
\end{equation}

Using Duhamel's principle and Equation~\eqref{eq: inter 3}, from Equation~\eqref{sec_mom_f_t} we obtain
\begin{equation}
\label{eq: 4}
M_2(t, x, y) = M_1(t, x, y) + \beta^{(2)} \int_0^t \sum_{w \in \mathbb{Z}^d} M_1(t-s, x, w) M_1^2(s, w, y)\,ds.
\end{equation}

\begin{remark}
In cases when underlying random walk has infinite variance of jumps, so that relation (see definition in~\cite{Getan}(Equation~(1.2))) when $\nu := \beta-\mu_1-r > 0$, the results were obtained in~\cite{Getan}(Th.1.2).
\end{remark}
Now, consider the case where $\nu > 0$ and the underlying random walk has superexponentially light tails of a random walk such that for all $\lambda \in \mathbb{R}^{d}$
\[
\sum_{z \in \mathbb{Z}^d} e^{(\lambda,z)} a_1(z) < \infty.
\]

From~\cite{MolchanovYarovaya2013}(4.7) we have for $t \to \infty$ and $|x-y| = O(\sqrt{t})$
\[
p(t, x, y) = \frac{e^{-(B^{-1}(x-y), (x-y))/(2t)}}{(2\pi t)^{d/2}\sqrt{\det B}}+o(t^{-d/2}),
\]
where $B=(b^{(kj)})$ is the matrix with elements
\[
b^{(kj)} = \sum_{z \in \mathbb{Z}^d \setminus \{0\}} z_k z_j a_1(z),\quad k,j = 1,\ldots,d.
\]

Note that for all $x, y \in \mathbb{Z}^d$, $t > 0$
\begin{align*}
p(t, x, y) &= \int_{[-\pi, \pi]^d} e^{-i(\theta, y-x)} \widehat{p}(t, \theta, 0)\,d\theta = \int_{[-\pi, \pi]^d} \cos ( \theta, y-x ) \widehat{p}(t, \theta, 0)\,d\theta \\
&\leqslant \int_{[-\pi, \pi]^d} \widehat{p}(t, \theta, 0)\,d\theta = p(t, 0, 0).
\end{align*}

Then, for $t \to \infty$
\[
p(t, 0, 0) = \frac{C}{t^{d/2}} + o(t^{-d/2})
\]
and $0< p(t, 0, 0) < 1$, we have
\[
p(t, 0, 0) < \frac{C}{(t+1)^{d/2}}.
\]

For $t \to \infty$, $|x-y| = O(\sqrt{t})$ and $\nu > 0$ from Equation~\eqref{eq: inter 3} and \eqref{eq: 4} we obtain
\begin{equation*}
\begin{split}
\frac{M_2(t, x, y)}{M_1^2(t, x, y)} &= \frac{M_1(t, x, y)}{M_1^2(t, x, y)} + \frac{\beta^{(2)} e^{\nu t} \int_0^t e^{\nu s}\sum_{w \in \mathbb{Z}^d}p(t-s, x, w)p^2(s, w, y)\,ds}{e^{2\nu t} p^2(t, x, y)} \\
&< \frac{1}{e^{\nu t} p(t, x, y)} + \frac{\beta^{(2)} \int_0^t e^{\nu s} p(s, 0, 0) \sum_{w \in \mathbb{Z}^d}p(t-s, x, w)p(s, w, y)\,ds}{e^{\nu t} p^2(t, x, y)}
\end{split}
\end{equation*}

Using the Kolmogorov--Chapman equation, we obtain
\[
\sum_{w \in \mathbb{Z}^d}p(t-s, x, w)p(s, w, y) = p(t,x,y)
\]
in the numerator of the last summand. Then, we can continue the estimation
\begin{align*}
&= \frac{\beta^{(2)} p(t,x,y) \int_0^t e^{\nu s} p(s, 0, 0) ds}{e^{\nu t} p^2(t, x, y)} + o(1) \\&= \frac{\beta^{(2)} \int_0^t e^{\nu s} p(s, 0, 0) ds}{e^{\nu t} p(t, x, y)} + o(1) < C \int_0^t e^{\nu (s-t)} \bigg(\frac{t}{s+1}\bigg)^{d/2}\,ds + o(1)
\end{align*}

Thus, the random variable $N_1(t, x, y)$ is non-intermittent for $|x-y| = O(\sqrt{t})$.

\subsection{The Second Moment for $N_2 (t,x)$}
\label{example_second_moment_N_2(t,x)}
In this section, we will set up the differential equation for the second order correlation function for $N_2 (t,x)$ and determine the asymptotic behavior in the special case. Define the following correlation function for the second type particles
\[
R_{22} (t,x,y) = \Expect [N_2 (t,x) N_2 (t,y)].
\]

To obtain the differential equation for $R_{22} (t,x,y)$, we consider this random variable at time moment $t+dt$. Unlike the differential equation for $N_1 (t,x)$, here we consider two cases: $x=y$ and $x\ne y$.

Firstly, consider the case when $x=y$. Here, we have for $R_{22} (t+dt,x,x)$:
\begin{align*}
R_{22} (t+&dt,x,x)= \Expect N_2^2 (t+dt,x)\\& = \Expect N_2^2 (t,x) + 2\Expect N_2 (t,x) \Bigl[ \Expect[\psi(dt,x)|\mathcal F_{\leqslant t}] \Bigr] + \Expect\Bigl[ \Expect [\psi^2(dt,x)|\mathcal F_{\leqslant t}] \Bigr] \Bigr]\\& = R_{22} (t,x,x) + 2\Expect N_2 (t,x) \\&\quad\times \Bigl[ \varkappa_2 \sum_{z\ne0} a_2 (z) N_2 (t,x+z,x)\,dt + rN_1 (t,x)\,dt - \mu_2 N_2 (t,x)\,dt \\&\quad- \varkappa_2 N_2 (t,x)\,dt + o(dt) \Bigr] + \Expect \Bigl[ \varkappa_2 \sum_{z\ne0} a_2 (z) N_2 (t,x+z,x)\,dt + rN_1 (t,x)\,dt \\&\quad+ \mu_2 N_2 (t,x)\,dt + \varkappa_2 N_2 (t,x)\,dt + o(dt) \Bigr]\\ &= R_{22} (t,x,x) + 2(\mathcal L_{2,x} R_{22} (t,\cdot,x))(x)\,dt + 2rR_{12}(t,x,x)\,dt + (\mathcal L_2 R_2 (t,\cdot))(x)\,dt\\&\quad- 2\mu_2 R_{22} (t,x,x)\,dt + 2rR_1 (t,x)\,dt + \mu_2 R_2 (t,x)\,dt + 2\varkappa_2 R_2 (t,x)\,dt + o(dt).
\end{align*}

Let $R_{12}(t,x,y) = \Expect [N_1 (t,x)N_2 (t,y)]$. Then, as $dt\to0$ we obtain the differential equation for $R_{22} (t,x,x)$:
\begin{align*}
\frac{\partial R_{22} (t,x,x)}{\partial t} &= 2(\mathcal L_{2,x} R_{22} (t,\cdot,x))(x) - 2\mu_2 R_{22} (t,x,x) + 2rR_{12}(t,x,x) \\&\quad+ (\mathcal L_2 R_2 (t,\cdot))(x) + rR_1 (t,x) + \mu_2 R_2 (t,x) + 2\varkappa_2 R_2 (t,x);\\
R_{22} (0,x,x) &= 0.
\end{align*}

Now, consider the case when $x\ne y$. Then,
\begin{align*}
R_{22} (t,x,y) &= \Expect\Bigl[ \Expect[(N_2 (t,x) +\psi (dt,x))(N_2 (t,y) +\psi (dt,y))|\mathcal F_{\leqslant t}] \Bigr] = R_{22} (t,x,y) \\&\quad+ \Expect N_2 (t,x)\Bigl[\varkappa_2 \sum_{z\ne0} a_2 (z) N_2 (t,y+z) rN_1 (t,y)\,dt - \mu_2 N_2 (t,y)\,dt \\&\quad- \varkappa_2 N_2 (t,y)\,dt + o(dt)\Bigr] + \Expect N_2 (t,y)\Bigl[\varkappa_2 \sum_{z\ne0} a_2 (z) N_2 (t,x+z) rN_1 (t,x)\,dt \\&\quad- \mu_2 N_2 (t,x)\,dt - \varkappa_2 N_2 (t,x)\,dt + o(dt)\Bigr] - \Expect[\varkappa_2 a_2 (x-y) N_2 (t,x)\,dt \\&\quad+ \varkappa_2 a_2 (y-x) N_2 (t,y)\,dt + o(dt)] = R_{22} (t,x,y) + (\mathcal L_{2,x} R_{22} (t,\cdot,y))(x)\,dt \\&\quad+ (\mathcal L_{2,y} R_{22} (t,\cdot,y))(x)\,dt - 2\mu_2 R_{22} (t,x,y)\,dt + r\bigl(R_{12}(t,x,y) \\&\quad+ R_{12}(t,y,x)\bigr)\,dt - \varkappa_2 a_2 (x-y)\bigl(R_2 (t,x)+R_2 (t,y)\bigr)\,dt + o(dt).
\end{align*}

Therefore, as $dt\to0$ we have the differential equation for $R_{22} (t,x,y)$ when $x\ne y$:
\begin{align*}
\frac{\partial R_{22} (t,x,y)}{\partial t} &= (\mathcal L_{2,x} R_{22} (t,\cdot,y))(x) + (\mathcal L_{2,y} R_{22} (t,\cdot,y))(x) - 2\mu_2 R_{22} (t,x,x) \\&\quad- \varkappa_2 a_2 (x-y)\bigl(R_2 (t,x)+R_2 (t,y)\bigr) + r\bigl(R_{12}(t,x,y) + R_{12}(t,y,x)\bigr);\\
R_{22} (0,x,y) &= 0.
\end{align*}

Above, we have defined function $R_{12}(t,x,y)=\Expect N_1 (t,x)N_2(t,y)$. This is unknown function, consequently, we need to obtain the differential for this function. Using the same technique as for $R_{22}(t,x,y)$ we get:
\begin{align*}
R_{12}(t+dt,x,y) &= \Expect[\Expect [(N_1 (t,x)+\xi(dt,x))(N_2 (t,y)+\psi(dt,y))| \mathcal F_{\leqslant t} ]] \\&= R_{12}(t,x,y) + (\beta-\mu_1-r-\mu_2)R_{12}(t,x,y)\,dt + r\Expect[N_1 (t,x)N_1 (t,y)]\,dt \\&\quad- \delta_x (y) rR_1 (t,x)\,dt + \bigl((\mathcal L_{1,x} R_{12}(t,\cdot,y))(x) + (\mathcal L_{2,y} R_{12}(t,\cdot,y))(x)\bigr)\,dt\\&\quad + o(dt),
\end{align*}

Then, as $dt\to0$ the differential equation for $R_{12}(t,x,y)$ is
\begin{equation}\label{example_R_12(t,x,y)}
\left.\begin{aligned}
\frac{\partial R_{12} (t,x,y)}{\partial t} &= (\mathcal L_{1,x} R_{12}(t,\cdot,y))(x) + (\mathcal L_{2,y} R_{12}(t,\cdot,y))(x) \\&\quad + (\beta-\mu_1-r-\mu_2)R_{12}(t,x,y)+ r\Expect[N_1 (t,x)N_1 (t,y)] \\&\quad - \delta_x (y) rR_1 (t,x);\\
R_{12}(0,x,y) &= 0.
\end{aligned}\right\}
\end{equation}

Let $R_{11} (t,x,y) = \Expect[N_1 (t,x)N_1 (t,y)]$. To get the behavior of this function, we also need to have the differential equation for it. Then, consider $R_{11} (t+dt,x,y)$:
\begin{align*}
R_{11} (t+dt,x,y) &= R_{11} (t,x,y) + 2(\beta-\mu_1-r)R_{11} (t,x,y) \,dt \\&\quad + \bigl((\mathcal L_{1,x} R_{11} (t,\cdot,y))(x)+ (\mathcal L_{1,y} R_{11} (t,\cdot,y))(x)\bigr)\,dt \\&\quad + \delta_x (y) \bigl((\sum_{n\geqslant2} (n-1)^2 b_n+\mu_1+r + 2\varkappa_1)R_1 (t,x)+(\mathcal L_1 R_1 (t,\cdot))(x)\bigr)\,dt\\&\quad - \varkappa_1 a_1 (x-y)\bigl(R_1 (t,x)+ R_1 (t,y)\bigr)\,dt + o(dt).
\end{align*}

If $dt\to0$, we obtain $R_{11} (t,x,y)$:
\begin{equation}\label{example_R_11(t,x,y)}
\left.\begin{aligned}
\frac{\partial R_{11} (t,x,y)}{\partial t} &= (\mathcal L_{1,x} R_{11} (t,\cdot,y))(x) + (\mathcal L_{1,y} R_{11} (t,\cdot,y))(x)\\&\quad + 2(\beta-\mu_1-r)R_{11} (t,x,y) \\&\quad+ \delta_x (y) \bigl((\beta+\mu_1+r)R_1 (t,x)-(\mathcal L_1 R_1 (t,\cdot))(x)\bigr)\\&\quad- \varkappa_1 a_1 (x-y)\bigl(R_1 (t,x)+ R_1 (t,y)\bigr);\\
R_{11} (0,x,y) &= \delta_0 (x) \delta_0 (y).
\end{aligned}\right\}
\end{equation}

In the future calculus, we need the following lemma.
\begin{lemma}\label{lemma_G(t,x,y)_K(t,x,y)}
Let $G(t,x,y) := R_1 (t,x) R_2 (t,y)$ and $K(t,x,y) = R_1 (t,x) R_1 (t,y)$. Then, functions $G(t,x,y)$ and $K(t,x,y)$ satisfy the following differential equations
\begin{align*}
\frac{\partial G(t,x,y) }{\partial t} &= (\mathcal L_{1,x} G(t,\cdot,y))(x) + (\mathcal L_{2,y} G(t,\cdot,y))(x) + (\beta-\mu_1-r-\mu_2)G(t,x,y) \\&\quad+ rK(t,x,y);\\
G(0,x,y) &= 0.
\end{align*}
\begin{align*}
\frac{\partial K(t,x,y)}{\partial t} &= (\mathcal L_{1,x} K(t,\cdot,y))(x) + (\mathcal L_{1,y} K(t,\cdot,y))(x) + 2(\beta-\mu_1-r)K(t,x,y);\\
K(0,x,y) &= \delta_0 (x)\delta_0 (y).
\end{align*}
\end{lemma}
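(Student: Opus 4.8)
The plan is to prove both identities by the same elementary route: apply the Leibniz rule for the time derivative of a product, substitute the already-established evolution equations~\eqref{example_first_eq} for $R_1(t,x)$ and~\eqref{example_second_eq} for $R_2(t,x)$, and then reorganize the resulting terms so that the spatial operators acting on a single coordinate of the product reassemble into $\mathcal L_{1,x}$, $\mathcal L_{1,y}$, $\mathcal L_{2,y}$ applied to $G$ or $K$ directly. The only structural fact needed is that each operator $\mathcal L_{i,x}$ (respectively $\mathcal L_{i,y}$), being linear and acting only through the indicated spatial variable, satisfies
\[
(\mathcal L_{1,x} G(t,\cdot,y))(x) = (\mathcal L_1 R_1(t,\cdot))(x)\, R_2(t,y),\qquad
(\mathcal L_{2,y} G(t,\cdot,y))(x) = R_1(t,x)\,(\mathcal L_2 R_2(t,\cdot))(y),
\]
and analogously for $K$ with two copies of $R_1$; this is immediate from the definition of $\mathcal L_{i,\cdot}$ given just before the lemma, since $\mathcal L_{1,x}$ does not see the variable $y$ and vice versa.

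For $G(t,x,y) = R_1(t,x)R_2(t,y)$ I would first write $\partial_t G = (\partial_t R_1(t,x))R_2(t,y) + R_1(t,x)(\partial_t R_2(t,y))$, then plug in~\eqref{example_first_eq} and~\eqref{example_second_eq}. This produces the five terms $(\mathcal L_1 R_1(t,\cdot))(x)R_2(t,y)$, $(\beta-\mu_1-r)R_1(t,x)R_2(t,y)$, $R_1(t,x)(\mathcal L_2 R_2(t,\cdot))(y)$, $-\mu_2 R_1(t,x)R_2(t,y)$, and $r R_1(t,x)R_1(t,y)$. Recognizing the first as $(\mathcal L_{1,x} G)(x)$, the third as $(\mathcal L_{2,y} G)(x)$, collecting $(\beta-\mu_1-r) - \mu_2 = \beta-\mu_1-r-\mu_2$ in front of $G$, and rewriting the last term as $rK(t,x,y)$ yields exactly the claimed equation. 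The initial condition follows from $G(0,x,y)=R_1(0,x)R_2(0,y)=\delta_0(x)\cdot 0 = 0$.

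For $K(t,x,y)=R_1(t,x)R_1(t,y)$ the computation is the same but simpler: $\partial_t K = (\partial_t R_1(t,x))R_1(t,y) + R_1(t,x)(\partial_t R_1(t,y))$, and substituting~\eqref{example_first_eq} in both factors gives $(\mathcal L_1 R_1(t,\cdot))(x)R_1(t,y) + R_1(t,x)(\mathcal L_1 R_1(t,\cdot))(y) + 2(\beta-\mu_1-r)K(t,x,y)$, i.e.\ $(\mathcal L_{1,x}K)(x) + (\mathcal L_{1,y}K)(x) + 2(\beta-\mu_1-r)K$, with $K(0,x,y)=\delta_0(x)\delta_0(y)$. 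There is no real obstacle here — the argument is a routine verification; the only point worth a sentence of care is justifying that differentiation under the product is legitimate, which follows because $R_1(t,\cdot)$ and $R_2(t,\cdot)$ are finite and smooth in $t$ (their explicit solutions were obtained in Section~\ref{example_the_first_moments}), so both $G$ and $K$ are differentiable in $t$ and the Leibniz rule applies pointwise.
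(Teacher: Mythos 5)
Your proposal is correct and follows essentially the same route as the paper's proof: apply the Leibniz rule to the product, substitute the evolution equations~\eqref{example_first_eq} and~\eqref{example_second_eq} for $R_1$ and $R_2$, and regroup the terms so the single-variable migration operators act on $G$ and $K$ directly. The added remark about factoring $\mathcal L_{1,x}$, $\mathcal L_{2,y}$ through the other coordinate and about differentiability of the product is a harmless (and correct) elaboration of what the paper leaves implicit.
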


\begin{proof}
Note that
\[
\frac{G(t,x,y)}{\partial t} = R_2 (t,y) \frac{\partial R_1 (t,x)}{\partial t} + R_1 (t,x) \frac{\partial R_2 (t,y)}{\partial t}.
\]

Then, with the usage of Equations~\eqref{example_first_eq} and \eqref{example_second_eq}, we have
\begin{align*}
\frac{G(t,x,y)}{\partial t} &= R_2 (t,y) \frac{\partial R_1 (t,x)}{\partial t} + R_1 (t,x) \frac{\partial R_2 (t,y)}{\partial t} = R_2 (t,y) \bigl( (\beta - \mu_1 - r) R_1 (t,x) \\&\quad+ (\mathcal L_1 R_1 (t,\cdot))(x) \bigr) + R_1 (t,x) \bigl( (\mathcal L_2 R_2 (t,\cdot))(y) - \mu_2 R_2 (t,y) + rR_1 (t,y) \bigr) \\&= (\mathcal L_{1,x} G(t,\cdot,y))(x) + (\mathcal L_{2,y} G(t,\cdot,y))(x) + (\beta-\mu_1-r-\mu_2) G(t,x,y)\\&\quad + rK(t,x,y).
\end{align*}

Similarly, we can obtain the differential equation for $K(t,x,y)$. Notice that
\[
\frac{K(t,x,y)}{\partial t} = R_1 (t,y) \frac{\partial R_1 (t,x)}{\partial t} + R_1 (t,x) \frac{\partial R_1 (t,y)}{\partial t}.
\]

Consequently, using Equation~\eqref{example_first_eq}, we get
\begin{align*}
\frac{K(t,x,y)}{\partial t} &= R_1 (t,y) \frac{\partial R_1 (t,x)}{\partial t} + R_1 (t,x) \frac{\partial R_1 (t,y)}{\partial t} = R_1 (t,y) \bigl( (\beta - \mu_1 - r) R_1 (t,x) \\&\quad+ (\mathcal L_1 R_1 (t,\cdot))(x) \bigr) + R_1 (t,x) \bigl( (\beta - \mu_1 - r) R_1 (t,y) + (\mathcal L_1 R_1 (t,\cdot))(y) \bigr) \\&= (\mathcal L_{1,x} K(t,\cdot,y))(x) + (\mathcal L_{1,y} K(t,\cdot,y))(x) + 2(\beta-\mu_1-r)K(t,x,y).
\end{align*}

The initial conditions follow from
\[
G(0,x,y) = R_1(0,x)R_2(0,y)= 0,\quad K(0,x,y) = R_1(0,x)R_1(0,y)=\delta_0(x)\delta_0(y).
\]
\end{proof}

Find out the asymptotic behavior of the second moment $R_{22} (t,x,y)$ in the particular case where we consider $\mathbb Z^d$. Let the generators of the random walk be identical for both types of particles, so that $\mathcal L_1 = \mathcal L_2 =: \mathcal L$. Assume that the random walk with generator $\mathcal L$ has finite variance of jumps (see~\eqref{f_v_j}). For $A=\beta-\mu_1 - r > 0$ and $\mu_2 = 0$, it was found in Section~\ref{example_the_first_moments} that, as $t\to\infty$, for each $x\in\mathbb{Z}^{d}$,
\[
R_1 (t,x) \sim \frac{e^{At}}{t^{d/2}},\quad R_2 (t,x) \sim \frac{re^{At}}{At^{d/2}}.
\]
Note that from Lemma~\ref{lemma_G(t,x,y)_K(t,x,y)} and Equation~\eqref{example_R_11(t,x,y)}, we obtain
\begin{align*}
\frac{\partial (K(t,x,y) - R_{11} (t,x,y))}{\partial t} &= (\mathcal L_{1,x} (K(t,\cdot,y) - R_{11} (t,\cdot,y)))(x)\\&\quad + (\mathcal L_{1,y} (K(t,\cdot,y) - R_{11} (t,\cdot,y)))(x) \\&\quad+ 2(\beta-\mu_1-r)(K(t,x,y) - R_{11} (t,x,y)) + F(R_1(t,x)),
\end{align*}
where $F(R_1(t,x))$ is the function which linearly depends on $R_1 (t,x)$. Then, from the representation of $R_1 (t,x)$ as $t\to\infty$ we have, for each $x\in\mathbb{Z}^{d}$,
\[
F(R_1(t,x)) \sim C(y-x) \frac{e^At}{t^{d/2}},
\]
where $C(\cdot)$ function which can be obtained from Equation~\eqref{example_R_11(t,x,y)}. Then, as $t\to\infty$
\[
K(t,x,y) - R_{11} (t,x,y) = \frac{Ce^{At}}{t^{d/2}},\quad C-\text{constant}.
\]

The same technique helps us to find out that
\[
G(t,x,y) - R_{12} (t,x,y) = \frac{C'e^{At}}{t^{d/2}},\quad t\to\infty\quad C'-\text{constant}.
\]

Due to the homogeneity of space, we can consider the following values: $R_{22} (t,u) := R_{22} (t,0,y-x)$.

Using the above results, rewrite the equation for the second correlation function:
\begin{align*}
\frac{\partial R_{22} (t,u)}{\partial t} &= 2(\mathcal L R_{22} (t,\cdot))(u) + \frac{r^2 \gamma_1^2 (u) e^{2At}}{t^d} + \delta_0 (u) \frac{r\gamma_1 (u) e^{At}}{t^{d/2}} - a_2 (u) \frac{r\gamma_2 (u) e^{At}}{\sqrt{2\pi t}};\\
R_{22} (0,u) &= 0.
\end{align*}

Divide the last equation into two equations and find the solutions $R_{22}^{(1)} (t,u)$ and $R_{22}^{(2)} (t,u)$ of the following equations:
\begin{alignat*}{2}
\frac{\partial R_{22}^{(1)} (t,u)}{\partial t} &= 2(\mathcal L R_{22}^{(1)} (t,\cdot))(u) + \frac{r^2 \gamma_1 (u) e^{2At}}{t^d},&\qquad
R_{22}^{(1)} (0,u) &= 0;\\
\frac{\partial R_{22}^{(2)} (t,u)}{\partial t} &= 2(\mathcal L R_{22}^{(2)} (t,\cdot))(u) + \delta_0 (u) \frac{r e^{At}}{\sqrt{2\pi t}} - a_2 (u) \frac{r e^{At}}{t^{d/2}},&\qquad
R_{22}^{(2)} (0,u) &= 0.
\end{alignat*}

Then, we will have $R_{22} (t,u) = R_{22}^{(1)} (t,u) + R_{22}^{(2)} (t,u)$. The solution of the first equation for large $t$ is asymptotically as follows: $R_{22}^{(1)} (t,u) \sim \frac{C_1 (u)e^{2At}}{t^d}$ for each $u\in\mathbb{Z}^{d}$, whereas the solution of the second equation is asymptotically as follows: $R_{22}^{(2)} (t,u) \sim \frac{C_2 (u)e^{At}}{t^{d/2}}$ for each $u\in\mathbb{Z}^{d}$. Thus, $R_{22} (t,u) \sim \frac{C_1 (u)e^{2At}}{t^d}$ for each $u\in\mathbb{Z}^{d}$.

Here, for a fixed space coordinate, we do not have the intermittency effect:
\begin{align*}
\frac{R_{22} (t,x,x)}{R_2^2 (t,x)} \sim const <\infty,\quad t\to\infty,
\end{align*}
for each $x\in\mathbb{Z}^{d}$.

\section{Simulation of BRW}
\label{simulation_of_brw}

This section is devoted to process modeling using the Python programming language. We consider the state of the system as an array whose elements are lists of the form $[i, x, t_1, t_2]$, where $i$ characterizes the type of a particle and $x = (x_1, \dots,x_d)$ is its spatial coordinate, $t_1$ is the time of its entry into a given position (it was born at $x$ at this time or jumped at $x$ at this time), $t_2$ is the time of its exit from this position (it died at $x$ at this time or jumped out of $x$ at this time). Recall that we perceive all events related to the reproduction of offspring, including the degeneration from one type to another, as the death of the parent particle with the production of $k$ descendants of the first type and $l$ of the second type.

\textbf{Initialization.} We set the characteristics of BRW, $i = 1,2$:
\begin{enumerate}
\item $d$ is the lattice dimension;
\item $R$ is the array consisting of a finite number of lists $[i, x, 0, 0]$ characterizing types $i$ and particle positions $x = (x_1, \dots, x_d)$ at the initial moment of time;
\item $\varkappa_i$ are diffusion coefficients;
\item $A_i = (a_i (x,y))$ are matrices of the random walk intensities, by which the generators~\eqref{generator} are determined;
\item $\mu_i \geqslant 0$ are the death intensities;
\item $\beta_i(k, l) \geqslant 0$ are the birth intensities;
\item $r \geqslant 0$ is the intensity of degeneration from the first type to the second;
\item $T > 0$ is the duration of evolution under consideration.
\end{enumerate}

\textbf{Algorithm step.} We select one of the elements $[i, x, t_1, t_2]$ of the array $R$ such that $t_2 < T$. The particle spends exponential time $dt$ at the current position $x$, after which it does one of the following:
\begin{enumerate}
\item with probability $\mu_i/(\varkappa_i+\mu_i+\sum_{k+l\geqslant2} \beta_i(k, l) + r_i)$ dies;
\item with probability $\beta_i (k,l)/(\varkappa_i+\mu_i+\sum_{k+l\geqslant2} \beta_i(k, l) + r_i)$ divides into $k+l$ particles, then we append $k$ lists $[1, x, t_2, t_2+dt]$ and $l$ lists $[2, x, t_2, t_2+dt]$ to the array;
\item with probability $\varkappa_i a(z)/(\varkappa_i+\mu_i+\sum_{k+l\geqslant2} \beta_i(k, l) + r_i)$ jumps from position $x$ to position $x+z \ne x$, then we append $[i, x, t_2, t_2+dt]$ to the array;
\item with probability $r/(\varkappa_1+\mu_1+\sum_{k+l\geqslant2} \beta_1(k, l) + r)$ turns into a particle of the second type, then we append $[2, x, t_2, t_2+dt]$
to the array.
\end{enumerate}

Considered $[i, x, t_1, t_2]$ moves from array $R$ $(processing)$ to array $H$ $(history)$.

\textbf{Stop condition.} Algorithm steps are followed until there are elements $[i, x, t_1, t_2]$ in the array $R$ such that $t_2 < T$.

\textbf{Data analysis.} After the process is completed, the entire history of particles in different states is located in arrays $R$ and $H$. To find out the number, type and spatial configuration of particles at time $t$, we select those elements $[i, x, t_1, t_2]$ of the array $H$ for which $t_1 \leqslant t < t_2$.

\textbf{Simulations.} Suppose $d=1$ and at time $t=0$ there are $300$ initial particles on segment $[0, 300]\in\mathbb Z$. The random walk for the particles of the first type has intensities $a_1 (z) = 1/2$ for $|z| = 1$ and $\varkappa_1 = 1$, for the second type $a_2 (z) = 1/6$ for $|z|=1,2,3$ and $\varkappa_2 = 4$. Figure~\ref{g3} shows the simulation with parameters $\mu_1=0.25$, $\beta_1 (2,0) = 0.125$, $\beta_1 (1,1) = 0.125$, $\mu_2 = 0.375$, $\beta_2 (0,2)=0.125$, $\beta_2 (1,1) = 0.25$, all other birth/death intensities equal to $0$. It demonstrates the clustering effect in the case of the critical branching law described in~Section~\ref{Cluster}.

\begin{figure}[htbp!]
\centering
\includegraphics[width=0.8\textwidth]{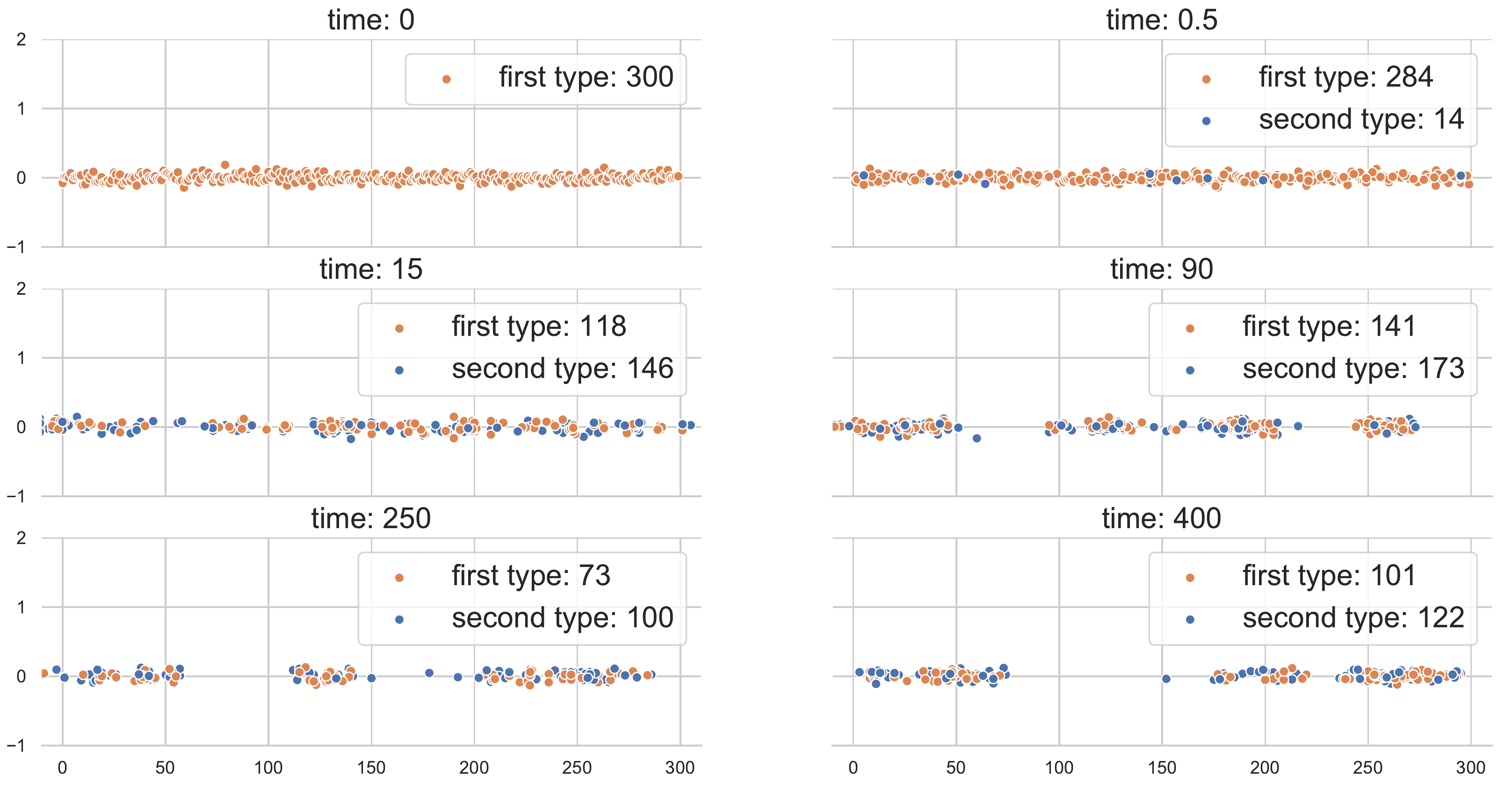}
\caption{Particle populations on $\mathbb Z^1$}
\label{g3}
\end{figure}

Suppose $d=2$, at the initial time $ t = 0$ there are $200$ particles of the first type at $(x_1, x_2) = (0, 0)$. We present the results for the case when the particles of the second type cannot produce offsprings (this was considered in the example), with the following parameters: The walk of the particles of the first type is set with a generator $\mathcal{L}_1$ with $\varkappa_1 = 1$ and intensities $a_1(z) = 1/80$, where $z=\{(z_1,z_2)\ne(0,0): z_1,z_2\in\mathbb Z, -4\leqslant z_1\leqslant4, -4\leqslant z_2\leqslant4\}$. The walk of particles of the second type is set using a generator $\mathcal{L}_2$ with $\varkappa_2 = 1$ and intensities $a_2(z) = 1/24$, where $z=\{(z_1,z_2)\ne(0,0): z_1,z_2\in\mathbb Z, -2\leqslant z_1\leqslant2, -2\leqslant z_2\leqslant2\}$. We record the number and arrangement of particles of both types at 6 time points. This simulation follows the model in Section~\ref{important_example}. Figure~\ref{g1} shows the simulation with parameters $\mu_1 = 0.05$, $\beta_1 (2,0) = 0.5$, $r = 0.45$, all other birth/death intensities equal to $0$.

\begin{figure}[htbp!]
\centering
\includegraphics[width=0.8\textwidth]{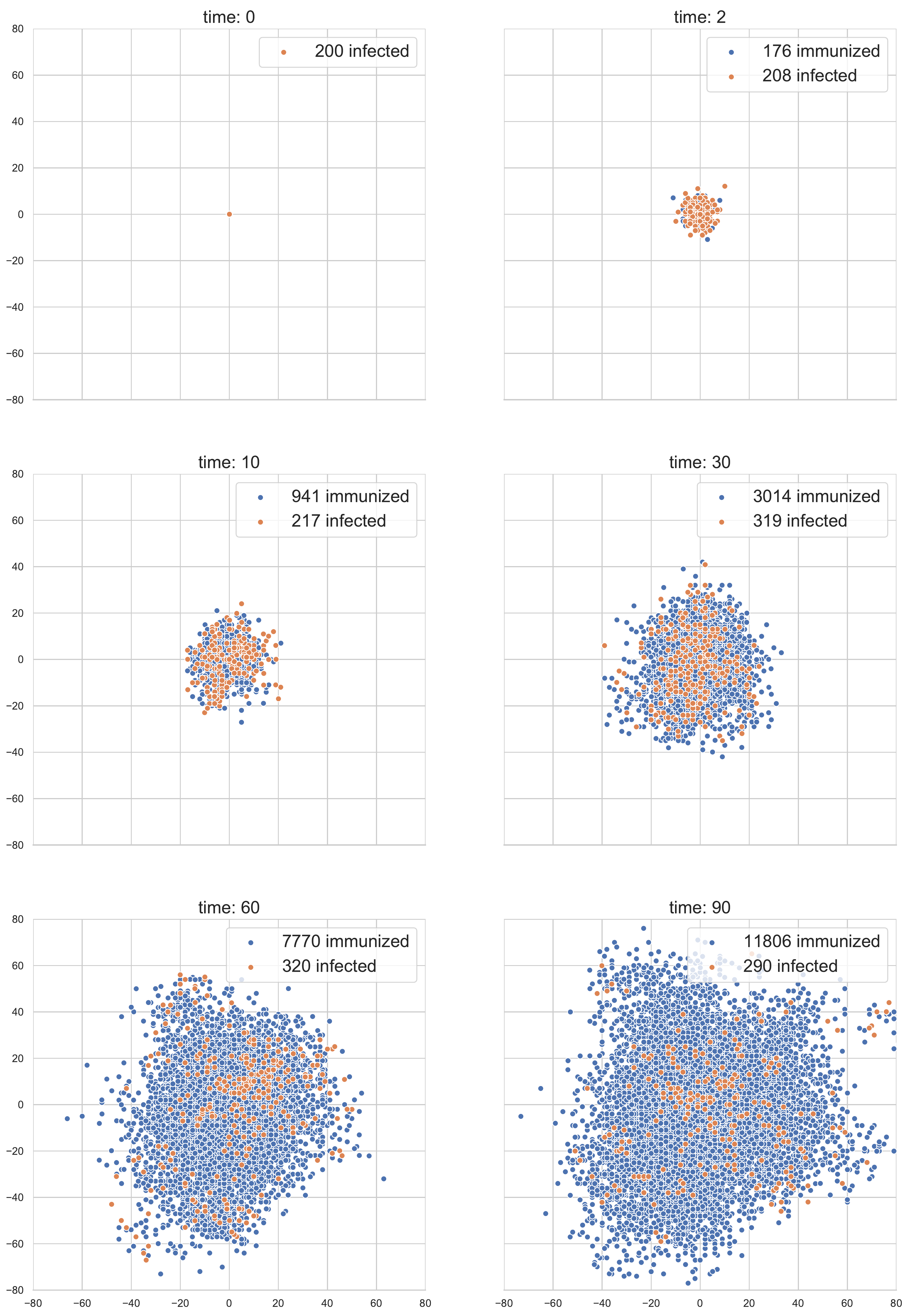}
\caption{Particle populations on $\mathbb Z^2$}
\label{g1}
\end{figure}

\section{Conclusions}

In this work, we considered a continuous-time branching random walk with two types of particles. The main results were devoted to the study of the limiting behavior of the moments of subpopulations generated by a single particle of each type. In particular, in Section~\ref{the_first_moments}, we have derived the differential equations for the first moments of subpopulations and found their solutions, which allows us to find exact expressions for their asymptotics. Similar results for the second moments were obtained in Section~\ref{the_second_moments}. In Section~\ref{Cluster}, we have shown that for particles of both types with the underlying recurrent random walks on $\mathbb Z^d$ a phenomenon of clustering of particles can be observed over long times, which means that the majority of particles are concentrated in some particular areas. The obtained results were then applied in Section~\ref{important_example} to study epidemic propagation. In this model, we considered two types of particles: infected and immunity generated. At the beginning, there is an infected particle that can infect others. Here, for the local number of particles of each type at a lattice point, we study the moments and their limiting behavior. Additionally, the effect of intermittency of the infected particles was studied for a supercritical branching process at each lattice point. To demonstrate the effect of limit clustering for the epidemiological model, we provided the results of a numerical simulation in Section~\ref{simulation_of_brw}.

We would like to emphasize that the present work is primarily devoted to a theoretical analysis of the effects that occur in branching random walks with two types of particles. We have tried to illustrate the obtained theoretical results by a numerical simulation. However, this simulation should not be considered as a full-fledged numerical analysis of the considered situation. Therefore, the question of developing real programs (in Python, R or any other language) for the numerical analysis of the considered processes arises quite naturally. In this paper, the authors did not undertake such a task, mainly because the fact that the computational aspects of modeling multidimensional processes are a special science that can hardly be treated professionally and completely in one or a few sections of even such an extensive article as ours. Possibly, further special studies will be devoted to it.

\section*{Acknowledgments} Iu.\,Makarova, D.\,Balashova and E.\,Yarovaya were supported by the Russian Foundation for the Basic Research (RFBR), project No. 20-01-00487; S.\,Molchanov was supported by the Russian Science Foundation (RSF), project No. 20-11-20119.

%\bibliographystyle{amsplain-arxiv}          % For bibtex8
%\bibliography{two_type-BRW_mdpi}  %

\end{document}